\newtheorem{df}{Def}[section]  
\newtheorem{teo}[df]{Theorem}
\newtheorem{cor}[df]{Corollary}
\newtheorem{lem}[df]{Lemma}
\newtheorem{lemma}[df]{Lemma}
\newtheorem{conv}[df]{Convention}
\newtheorem{prop}[df]{Proposition}
\newtheorem{example}[df]{Example}
\newtheorem{notation}[df]{Notation}
\newtheorem{conjecture}[df]{Conjecture}
\newcommand{\gap}{[\phantom{a}]}
\newcommand{\CORR}[1]{#1}
\author{Fausto Barbero}
\title{Complexity of syntactical tree fragments of Independence-Friendly logic\footnote{The present work has been developed under the Academy of Finland project 286991, ``Dependence and Independence in Logic: Foundations and Philosophical Significance'', and revised under the Academy of Finland project 316460, ``Semantics of causal and counterfactual dependence''.
}
}
\begin{document}

\setcounter{page}{1}

\maketitle


\begin{abstract}
A dichotomy result of Sevenster (2014) completely classified the quantifier prefixes of regular Independence-Friendly (IF) logic according to the patterns of quantifier dependence they contain. On one hand, prefixes that contain ``Henkin'' or ``signalling'' patterns were shown to characterize fragments of $IF$ logic that capture NP-complete problems; all the remaining prefixes were shown instead to be essentially first-order.

In the present paper we develop the machinery which is needed in order to extend the results of Sevenster to non-prenex, regular IF sentences. This involves shifting attention from quantifier prefixes to a (rather general) class of syntactical tree prefixes. 

We partially classify the fragments of regular $IF$ logic that are thus determined by syntactical trees; in particular, a) we identify three tree prefixes that are neither signalling nor Henkin, and yet express NP-complete problems and other second-order concepts; and b) we give more general criteria for checking the first-orderness of an $IF$ sentence. \\

Keywords:
 Independence-Friendly logic,   tractability frontier,  prefixes,  syntactical trees,  signalling, NP-complete problems.\\

MSC classification:  03C80, 03B60, 68Q19.
\end{abstract}








\section{Introduction}

In formulas of first-order logic, an existential quantifier is implicitly dependent on all quantifiers that occur above it. For example, a sentence of the form $\forall x \exists y\psi(x,y)$ asserts that, for each value  that can be picked for $x$, a value for $y$ can be chosen so that $\psi(x,y)$ is satisfied. In other words, the fact that $\exists y$ occurs in the scope of $\forall x$ determines the existence of a (Skolem) function $f$ such that $\psi(x,f(x))$ holds. Independence-Friendly (IF) logic is an extension of first-order logic which frees the notion of dependence between variables from the syntactical notion of scope dependence. This is obtained by enriching the syntax with a slashing device, that is, by allowing quantifiers of the form $(Qv/V)$, where $V$ is a finite set of variables. A quantifier $(\exists v/V)$ expresses the fact that $v$ is to picked as a function of all quantifiers above  $(\exists v/V)$ \emph{except} for those that are listed in $V$. $IF$ logic was introduced in \cite{HinSan1989} with the purpose of decomposing the so-called partially ordered quantifiers of Henkin (\cite{Hen1961}) into individual quantifiers. The simplest among the Henkin quantifiers is a prefix of $4$ classical quantifiers arranged as in the formula

$$\left(\begin{array}{ll} \forall x & \exists y\\  \forall z & \exists w \end{array}\right)\psi(x,y,z,w)$$

\noindent which is meant to assert that $y$ must be picked as a function of $x$ only, and $w$ as a function of $z$ only. The meaning of such a sentence is expressed by a Skolemization of the form $\exists f\exists g\forall x\forall z\psi(x,f(x),z,g(z))$. In the version of $IF$ logic that we consider here (which was introduced in \cite{Hod97}, and is sometimes called \emph{slash logic}) this Henkin sentence can be rendered in a number of different ways, for example by the sentence $\forall x\exists y\forall z(\exists w/\{x,y\})\psi(x,y,z,w)$. However, it has been soon realized that slash logic can express patterns of dependence which differ from those that come from Henkin quantifiers. One example is the short signalling sequence in  
$$\forall x \exists y(\exists z/\{x\})\chi(x,y,z).$$
\noindent If, say, $\chi(x,y,z)$ is a quantifier-free formula, then this sentence is equivalent to a Skolem formula of the form $\exists f\exists g\forall x\forall z \chi(x,f(x),g(f(x)))$. One way to understand what happens in the evaluation of such a formula (over some structure $M$) is to imagine a game played by a Falsifier and a team of two Verifiers, who pick witnesses for $x,y,z$; the purpose of the Falsifier is to build an assignment which does not satisfy $\chi(x,y,z)$, while the Verifiers aim for the opposite goal. The game is in three consecutive turns: first the Falsifier picks a value for $x$; the first Verifier then picks a value for $y$; finally, the second Verifier picks a value for $z$ without looking at the value that was chosen for $x$; if the chosen values satisfy $\chi$, then the Verifiers win. If the Verifiers have a winning strategy for this game, then the sentence is true in the structure under consideration. Notice that the second Verifier can look at what the first Verifier has chosen; the two might then agree to use the value chosen for $y$ in order to signal some information about the value of $x$ (which is visible to the first Verifier, but not to the second). Hence the name of \emph{signalling} for this kind of sequence of quantifiers. It was realized in \cite{CaiKry1999} that this kind of sentence can express second-order notions such as infinity over a poor vocabulary.\footnote{Actually, some suggestion of this kind already occurred in the literature on Henkin quantifiers, see \cite{End1970}.} Later, \cite{Sev2014} showed the same over finite structures; signalling sentences can express the NP-complete problem EXACT COVER BY 3-SET. In   \cite{Sev2014} one also finds a classification result: if we restrict attention to \emph{prenex} $IF$ sentences which are \emph{regular} (that is, variables cannot be requantified), then the quantifier prefixes which can express second-order concepts are exactly those that are Henkin or signalling (in a specific technical sense that will be reviewed later). If instead a prenex, regular $IF$ sentence has a quantifier prefix that is neither Henkin nor signalling, then it can be mechanically transformed into an equivalent first-order sentence.

The purpose of the present paper is to extend the results of \cite{Sev2014} beyond the boundaries of prenex $IF$ logic, by identifying  syntactical structures (tree prefixes) which can capture second order concepts and others which cannot. We are lead in this direction by a number of examples from the literature which point to the presence, in $IF$ logic, of interesting interactions between quantifiers \emph{and connectives}. To give an idea, we review an example from \cite{Jan2002} which illustrates the phenomenon of \emph{signalling by disjunction}. Consider the formula
$$\forall x (\exists y/\{x\})x\neq y.$$
Here a value for $y$ must be chosen to be different from the value of $x$ without knowing the value of $x$; it should be clear that, if the structure under consideration has at least two distinct elements, no strategy allows to do this; thus the sentence is not true on such a structure. However, the sentence 
$$\forall x ( (\exists y/\{x\})x\neq y \lor (\exists y/\{x\})x\neq y )$$
is true in all structures with at least two elements. The reason  can be again explained in game-theoretical terms. Here we have three Verifiers (one for the disjunction, and one for each existential quantifier) and one Falsifier, for the universal quantifier. First the Falsifier picks a value for $x$; then the first Verifier picks one of the two disjuncts; if the left disjunct is chosen, then the Verifier corresponding to the left occurrence of  $(\exists y/\{x\})$ picks a value for $y$; if the witnesses picked for $x$ and $y$ are distinct, then the Verifiers win, and otherwise the Falsifier wins. In case instead the right disjunct is chosen, the game proceeds analogously, but the witness for $y$ is picked by the Verifier associated to the right occurrence of  $(\exists y/\{x\})$.   On structures with at least two elements, the Verifier team has the following winning strategy: fix two distinct elements $a,b$; choose the left disjunct if and only if $a$ was chosen for $x$; choose $b$ for $y$ in the left disjunct; choose $a$ for $y$ in the right disjunct (this ensures that distinct witnesses for $x$ and $y$ are picked in any play).  The interpretation that   \cite{Jan2002} gives of this phenomenon is that some binary information can be stored by the Verifier team by means of the choice of a disjunct, in analogy with signalling by means of the choice of a value for a quantifier. This parallelism lead us to conjecture that some form of signalling by disjunction might be used to express second-order concepts. This is not the case for the syntctical tree of the example above, which we can write as  
$$\forall x ( (\exists y/\{x\})\gap \lor (\exists y/\{x\})\gap);$$
a careful use of equivalence rules of $IF$ logic tranforms any sentence of this form (with the gap symbols $\gap$ replaced by quantifier-free formulas) into a first-order sentence. However, we will see that appropriate extensions of this basic pattern allow expressing NP-complete problems; we identified three such patterns, which appear under the names of GH2($\lor$), C1 and C2 patterns (see e.g. the table at the end of the paper). We must also mention that, recently (\cite{JaaBar2019}), a new second-order pattern has been identified which does not use disjunctions. For completeness, we will briefly describe it (under the name of GH3 pattern). 

We briefly explain why the NP-complete problems play an important role in our attempt of classification. The reason lies in the fact that $IF$ logic (as well as many other \emph{logics of imperfect information}, such as positive Henkin quantification \cite{BlaGur1986}, Dependence logic \cite{Vaa2007}, Inclusion logic \cite{Gal2012}, Independence logic \cite{GraVaa2013}) is expressively equivalent, at the level of sentences, with existential second-order logic (ESO). A  classical descriptive complexity result of Fagin (\cite{Fag1974}) tells us that, on finite structures, ESO captures exactly the complexity class NP of decision problems which can be solved in polynomial time by a nondeterministic Turing machine. The link between logic and complexity is given, in this case, by the ``data complexity'' version of the model checking problem: given a fixed formula $\psi$ expressed in some logical language, and a class $K$ of finite structures, the problem asks whether an input structure $M\in K$ satisfies $\varphi$ ($M\models\varphi$). The choice of reasonable encodings of input instances, and of the class $K$, allows reducing decision problems, that may seem to be completely unrelated to logic, to model checking problems. A decision problem is \emph{described} by a sentence $\varphi$ if there is a reasonable\footnote{The idea behind this notion of ``reasonable'' is a bit vague. Roughly, it means that the encoding  transforms instances of a problem D, in one of its typical presentations, into finite structures without using more resources than what are needed for solving the problem in the encoded form. For most of the discussion in this paper, this amounts to the requirement that the encoding takes polynomial time.} encoding of the instances of the problem into $K$, so that, for all $M\in K$,  $M\models \varphi$ if and only if $M$ encodes a ``yes'' instance of the problem. In this sense, $IF$ logic can be seen as an alternative cartography of the NP complexity class. The NP-complete problems are then important for the purpose of classification of second-order fragments of IF logic, because it is well known that such problems are not expressible in first-order logic. If a fragment contains a description of an NP-complete problem, then the fragment is not translatable into first-order logic, and its corresponding model-checking problem is unfeasible.\footnote{This last assertion is conditional on the well-known open conjecture P$\neq$NP, and on Cobham's thesis -- that the polynomial time complexity class captures the notion of feasibility.} Such a fragment will be itself called NP-hard (or \emph{mighty}, in the terminology of \cite{BlaGur1986}). Descriptions of complete problems for lower complexity classes such as L, NL, P would serve equally well the purpose of identifying second-order fragments; for example, recent work on the classification of prefixes of relational ESO (\cite{GotKolSch2000},\cite{Tan2015}) lead to the discovery of fragments which describe L- or NL-complete problems, but no problems of higher complexity. However, the classification  result of \cite{Sev2014} shows that quantifier prefix fragments of regular $IF$ logic are either NP-hard or in FO (the class of first-order definable problems); in this sense, it was called a \emph{dichotomy result}. The results of our paper, although they do not reach a full classification, seem to confirm that also the tree prefixes fall into this dichotomy.\footnote{This statement concerns those tree prefixes that generalize the notion of quantifier prefix; we will see that a less strict notion of tree prefix allows capturing an NL-complete fragment.} (On the other hand, a recent paper, \cite{BarHelRon2018}, proposed a candidate for an \emph{ir}regular prefix that is not first-order but plausibly stays within the complexity class L.)

The kind of tree prefixes that we are trying to classify are those that are \emph{positive} and \emph{initial}, by which we mean that they contain no occurrences of negation symbols nor of atomic formulas; this seems to be the reasonable analogue of a quantifier prefix, at least in the context of $IF$ logic. To make a concrete example, we will study the fragment of $IF$ sentences of the special form $\forall x(\forall y(\exists u/\{x\})\epsilon_1(x,y,u) \, \, \lor \, \, \forall z(\exists v/\{x\})\epsilon_2(x,z,v))$, where $\epsilon_1(x,y,u)$ and $\epsilon_2(x,z,v)$ are quantifier-free. We want to point out that the study of this fragment is not easily reducible to known results\footnote{The only classification results for \emph{functional} ESO  that we are aware of are those of \cite{Gra1990}, which show that the smallest non-first order prefix of functional ESO, $\exists f\forall x$, already suffices to capture NP-complete problems.} on ESO: the Skolemization procedure transforms sentences of this form into equivalent functional ESO sentences of the (not very simple) form $\exists f\exists g\forall x\forall y\forall z(\epsilon_1(x,y,f(y)/u) \lor \epsilon_2(x,z,g(z)/v))$. Sentences of this form do not fully cover the fragment of ESO corresponding to the quantifier prefix $\exists f\exists g\forall x\forall y\forall z$, because not all quantifier-free formulas $\epsilon(x,y,z)$ are equivalent to quantifier-free formulas of the form $\epsilon_1(x,y,f(y)/u) \lor \epsilon_2(x,z,g(z)/v)$ (i.e., formulas which have $\lor$ as their most external operator; in which each disjunct is a two-variable formula; 
in which all occurrences of $f$ are restricted to be applied to $y$, and all occurrences of $g$ are applied to $x$). Thus, we will either have to conjure new descriptions of NP-complete problems, or instead make use of the semantics and the inferential rules for IF logic in order to show that the given fragments are first-order.

Summarizing, in the present paper we give a partial classification of the complexity of the fragments of regular $IF$ logic which are induced by tree prefixes; some of these fragments are shown to contain descriptions of NP-complete problems, while others are shown to express only first-order concepts (more precisely, each sentence of such a fragment is shown to be equivalent to a first-order sentence). 
In section \ref{SEM} we briefly present $IF$ logic.  Section \ref{EQRULES}  presents some notions of equivalence of $IF$ sentences and formulas and reviews several equivalence rules of $IF$ logic that are used throughout the paper. An earlier draft (\cite{Bar2016}) made use of syntactical manipulations of trees rather than formulas; the difficulties of this alternative approach are succinctly described in appendix \ref{APPMANTREE}. Section \ref{EQUIVTREESSEC1} introduces syntactical trees and related notions; section \ref{COMPLEXITY1} isolates a notion of complexity for tree prefixes. In section \ref{SECROUGH}, we extend the notions of Henkin and signalling prefixes to the case of trees; then two new significant classes of tree prefixes are introduced: the \emph{generalized Henkin} and the \emph{coordinated} ones. It is then shown that the search for NP-hard prefixes can be narrowed down to the Henkin, signalling, generalized Henkin and coordinated classes; instead, sentences that  have as prefix one of the remaining trees (called \emph{modest} trees) can be mechanically transformed into first-order sentences. In section \ref{HIGHCOMP} we generalize Sevenster's extension lemma, showing that taking extensions of regular syntactical trees preserves properties such as NL,P,NP-hardness; and we use it to show that all trees that contain Henkin or signalling patterns are NP-complete. Section \ref{GHTREES} divides the generalized Henkin fragment into four subclasses; of these, one is shown to contain only NP-hard prefixes, which can express the SAT problem; for the other three classes, we give partial results, showing that many of the trees they contain are in FO. We also account for the recent discovery (\cite{JaaBar2019}) of a new NP-hard tree. Section \ref{CTREES} considers a first kind of coordinated trees, which are classified into three subclasses, all shown to be NP-complete (the first two define SAT, and the third one the SET SPLITTING problem). We also show that the trees in the third class can express 2-COLORABILITY (a logspace, non first-order problem). Section \ref{CTREES2} takes briefly into account the remaining coordinated trees (of ``second kind''), showing that a few of them are in FO. Two difficult proofs are postponed to appendices \ref{APPMODEST} and \ref{APPEXTLEM} for the sake of  readability.

  \section{$IF$ logic} \label{SEM}

We present here the syntax of $IF$ logic in the form which is also sometimes called \emph{slash logic}. The reader can consult \cite{ManSanSev2011} for further details on this language. We assume a countable set of (individual) variables. Signatures and terms are defined as for first-order logic.  An \textbf{$IF$ formula} is an expression of one of the following forms
$$t_1 = t_2 \ | \ t_1 \neq t_2 \ | \ R(t_1,\dots,t_n) \ | \ \neg R(t_1,\dots,t_n) \ | \ \psi \land \chi \ | \ \psi \lor \chi \ | \ (\forall v/V)\psi \ | \ (\exists v/V)\psi$$
where $t_1,\dots,t_n$ are terms, $R$ is an $n$-ary relation symbol, $v$ a variable, $V$ a finite set of variables (called \emph{slash set}), $\psi$ and $\chi$ $IF$ formulas.\footnote{Variants of this language allow a so-called \emph{dual negation} to occur in front of any (sub)formula. For most purposes, however, this extended language offers nothing new with respect to ours, in which formulas are negation normal -- only atomic formulas can be negated. See \cite{ManSanSev2011} for a discussion.} Formulas of the forms $t_1 = t_2,  t_1 \neq t_2,  R(t_1,\dots,t_n), \neg R(t_1,\dots,t_n)$ are called, as usual, \textbf{literals}. 

The set of free variables of a given $IF$ formula is defined inductively as follows:
\begin{itemize}
\item $FV(\psi\land\chi) = FV(\psi\lor\chi) = FV(\psi)\cup FV(\chi)$
\item $FV((\forall v/V)\psi) =FV((\exists v/V)\psi) =(FV(\psi)\setminus \{v\})\cup V$
\end{itemize}
Thus, also occurrences of variables in slash sets may be counted as free. 
 The set of bound variables of an $IF$ formula $\psi$, denoted as $Bound(\psi)$, is as usual the set of variables that occur quantified in $\psi$.
 If $FV(\psi)=\emptyset$, then $\psi$ is said to be an \textbf{$IF$ sentence}; otherwise, it is an \textbf{open formula}. If  $Bound(\psi)=\emptyset$, then $\psi$ is said to be \textbf{quantifier-free}.

For brevity, we will sometimes write quantifiers as $(Qv/u_1,\dots,u_n)$ instead of $(Qv/\{u_1,\dots,u_n\})$. A quantifier with empty slash set, say $(Qv/\emptyset)$, will be simply written $Qv$. A conservativity result (theorem \ref{TEOCONS} below) supports the identification of quantifiers with empty slash set with first-order quantifiers. 

In the introduction, we made use of games in order to give an idea of the meaning of $IF$ sentences. For technical purposes, it will be convenient to use in the rest of the paper a different semantics, nowadays called \emph{team semantics} (\cite{Hod97}, \cite{Vaa2007}), which is in accordance with the game-theoretical account over sentences, but also assigns a meaning to open formulas. 
In team semantics, formulas are interpreted over sets of assignments of a common variable domain (\emph{teams}), and thus their ``meanings'' are sets of teams.\footnote{The word ``team'' as used in this context has no relation to the teams of players from the game-theoretical interpretation.} Indeed, intuitively the notion of independence has no meaning over single assignments, and this intuition has been assessed by a combinatorial argument (\cite{CamHod2001}). We will write $M,X\models\varphi$ to say that the formula $\varphi$ is satisfied by the team $X$ on a (first-order) structure $M$.

\begin{conv}
By a structure $M$ we mean a pair $(dom(M),I_M)$, where $dom(M)$ is a set and $I_M$ is a function that maps each element of the signature into its interpretation (defined as for first-order logic). As is common, we write $M$ for $dom(M)$ when there is no risk of ambiguity.
\end{conv}

\begin{df}
A \textbf{team} $X$ on a structure $M$ is a set of assignments such that, for all $s,s'\in X$, $dom(s)$ is a finite set of variables, and $dom(s) = dom(s') = : dom(X)$.  

A team $X$ is \textbf{suitable} for a formula $\psi$ in case $FV(\psi)\subseteq dom(X)$.

A structure $M$ is \textbf{suitable} for $\psi$ if the signature of $M$ contains all the nonlogical symbols of $\psi$.
\end{df}

\begin{df}
Given a team $X$ over a structure $M$ and a variable $v$, the \textbf{duplicated team} $X[M/v]$ is defined as the team $\{s(a/v) \ | \ s\in X,a\in M\}$.

Given a team $X$ over a structure $M$, a variable $v$ and a function $F: X \rightarrow M$, the \textbf{supplemented team} $X[F/v]$ is defined as the team $\{s(F(s)/v) \ | \ s\in X\}$. 
\end{df}

\begin{df}
Given two assignments $s,s'$ with the same domain, and a set of variables $V$, we say that $s$ and $s'$ are \textbf{$V$-equivalent}, and we write $s\sim_V s'$, if $s(x) = s'(x)$ for all variables $x\in dom(s)\setminus V$.\\ 
Given a team $X$, a structure $M$ and a set $V$ of variables, a function $F:X\rightarrow M$ is \textbf{V-uniform} if $s\sim_V s'$ implies $F(s)=F(s')$ for all $s,s'\in X$.  
\end{df}

The notion of satisfaction by a team is defined by the following compositional clauses, which we present in the style of \cite{Vaa2007}. We assume familiarity with Tarskian semantics.

\begin{df}
We say that a suitable team X satisfies an  IF formula $\varphi$ over a structure $M$, and we write $M,X\models \varphi$  if any of the  following holds:
\begin{itemize}
\item For $\alpha$ literal, $M,X\models \alpha$ if $M,s\models \alpha$ in the Tarskian sense for every $s\in X$.
\item $M,X\models \psi\land\chi$ if $M,X\models\psi$ and $M,X\models\chi$.  
\item $M,X\models \psi\lor\chi$ if there are $Y,Z\subseteq X$ such that $Y\cup Z=X$, $M,Y\models \psi$, and $M,Z\models \chi$.
\item $M,X\models (\forall v/V)\psi$ if $M,X[M/v]\models \psi$.
\item $M,X\models (\exists v/V)\psi$ if $M,X[F/v]\models \psi$ for some $V$-uniform function $F:X\rightarrow M$.
\end{itemize}
\end{df}

\begin{df}
An $IF$ sentence $\varphi$ is said to be \textbf{true} in a structure $M$, and we write $M\models\varphi$, if $M,\{\emptyset\}\models\varphi$.\footnote{Here $\{\emptyset\}$ denotes the singleton team containing the empty assignment.}
\end{df}

\noindent These definitions conservatively extend the usual semantics of first-order logic, in the following sense:

\begin{prop}\label{TEOCONS}
1) (\cite{CaiDecJan2009}, Theorem 4.11)  Let $\varphi$ be an $IF$ sentence which is syntactically first-order (i.e., all of its slash sets are empty). Then $M\models\varphi$ according to team semantics if and only if $M\models\varphi$ according to Tarskian semantics.

2) (\cite{CaiDecJan2009}, Lemma 4.10) Let $\psi$ be an $IF$ formula which is syntactically first-order.  Then $M,X\models\psi$ according to team semantics if and only if, for all $s\in X$, $M,s\models\psi$ according to Tarskian semantics.
\end{prop}


The focus of the paper will be on the \emph{regular} fragment of $IF$ logic, in which requantification is forbidden:

\begin{df}
An $IF$ formula $\psi$ is said to be \textbf{regular} if:
\begin{enumerate}
\item Variables are not requantified, i.e., if a quantifier $(Qv/V)$ occurs in $\psi$, then no other quantifier of the form $(Q'v/V')$ occurs in the scope of  $(Qv/V)$.
\item No variable occurs both free and bound in $\psi$.
\end{enumerate}
\end{df}

\noindent Notice that condition 2. is automatically satisfied by sentences.

\section{Equivalence of sentences and formulas} \label{EQRULES}

\begin{df}
Two $IF$ sentences are (truth-)equivalent if they are true in the same structures (i.e., $\varphi\equiv\chi$ if for all structures $M$, $M\models\varphi\Leftrightarrow M\models\chi$). 
\end{df}



We will need a well known fact about the expressivity of $IF$ sentences:

\begin{prop} \label{FAGIN} (\cite{ManSanSev2011}, Theorems 6.10, 6.16)
On the sentence level, $IF$ logic is equiexpressive with existential second order logic. Thus, by Fagin's theorem (\cite{Fag1974}), the set of $IF$ sentences characterizes the complexity class NP.
\end{prop}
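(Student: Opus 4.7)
The plan is to prove the two inclusions $IF \le ESO$ and $ESO \le IF$ separately, using Skolemization in one direction and a translation of second-order quantifiers into slashed first-order quantifiers in the other. The NP-completeness consequence is then immediate from Fagin's theorem.

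For the direction $IF \le ESO$, I would proceed by induction on the structure of an IF sentence $\varphi$, associating to each subformula a second-order translation that captures its team-semantics meaning. The key idea is Skolemization: a slashed quantifier $(\exists v/V)$ asks for a witness function whose value depends only on variables outside $V$; such a function can be represented as an ESO function variable whose arguments are exactly the universally quantified variables in scope that are \emph{not} in $V$. Disjunctions are handled by existentially quantifying a Boolean-valued ``choice'' function (of the appropriate free variables) that assigns each assignment in the team to the left or right disjunct, exactly realizing the split $X = Y \cup Z$ required in the team-semantic clause for $\lor$. Concatenating all these second-order quantifiers in a prefix $\exists f_1 \cdots \exists f_k$ in front of a first-order matrix (obtained by replacing each slashed existential and each disjunction by the corresponding term/function evaluation) yields an ESO-equivalent sentence.

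For the direction $ESO \le IF$, I would start from an ESO sentence in Skolem normal form $\exists f_1 \cdots \exists f_k \forall x_1 \cdots \forall x_n \psi(\bar x, f_1(\bar t_1), \ldots, f_k(\bar t_k))$ where $\psi$ is quantifier-free. The strategy is: for each $f_i$ with argument tuple $\bar t_i \subseteq \{x_1,\ldots,x_n\}$, insert a slashed quantifier $(\exists y_i / V_i)$, where $V_i = \{x_1,\ldots,x_n\} \setminus \bar t_i$, placed after the universal prefix $\forall x_1 \cdots \forall x_n$ and with $y_i$ replacing $f_i(\bar t_i)$ in the matrix. The $V_i$-uniformity condition in the team-semantic clause for $(\exists y_i/V_i)$ then forces $y_i$ to be chosen as a function of $\bar t_i$ only, exactly mimicking the behavior of the ESO function variable $f_i$. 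One must ensure compositions like $f(g(x))$ are handled; this is done either by flattening terms (introducing auxiliary variables) or by observing that an arbitrary ESO sentence can be put into a form with only atomic composition. Relational function variables can be replaced by characteristic functions into $\{0,1\}$ assuming the universe has at least two elements (with a trivial workaround for singleton models).

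The main obstacle is keeping the translation clean in the non-prenex case and making sure the equivalences hold with respect to truth on sentences (not just on open formulas, where additional subtleties about team semantics arise). Once both directions are established, Fagin's theorem yields that IF sentences on finite structures capture exactly NP. The detailed bookkeeping is carried out in \cite{ManSanSev2011}, Theorems 6.10 and 6.16, which I would invoke for the final, fully worked-out translation.
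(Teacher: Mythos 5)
The paper offers no proof of this proposition at all: it is imported verbatim from \cite{ManSanSev2011} (Theorems 6.10 and 6.16), so there is no internal argument to measure yours against. Your outline is the standard route that underlies those cited theorems, and in broad strokes it is the right one; but as written it contains one genuine error.

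In the direction from $IF$ to ESO you say that the Skolem function for $(\exists v/V)$ should take as arguments ``exactly the universally quantified variables in scope that are not in $V$''. That is not the correct Skolemization of $IF$ logic: the function must be allowed to depend on \emph{all} superordinate quantified variables not in $V$, the existentially quantified ones included. This is precisely the signalling phenomenon around which the present paper revolves. For instance, $\forall x\,\exists y\,(\exists z/\{x\})\,z=x$ is true in every structure (the value of $x$ is passed to $z$ through $y$), and its correct Skolemization is $\exists f\exists g\,\forall x\,\bigl(g(f(x))=x\bigr)$, which is valid; under your restriction the Skolem function for $z$ would have no arguments at all, yielding $\exists f\exists c\,\forall x\,(c=x)$, which holds only in one-element structures. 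So your translation is not truth-preserving, and with it the signalling prefixes of Section \ref{HIGHCOMP} would wrongly collapse. The fix is routine --- take the argument tuple to be $dom(X)\setminus V$ at the point where the quantifier is evaluated, i.e.\ everything quantified above it that is not slashed away --- but as stated the induction establishes only one implication. The remaining ingredients of your sketch (choice functions realizing the split for $\lor$, slashed existentials simulating ESO function variables with the uniformity condition doing the work, flattening of nested terms, characteristic functions for relation variables with the singleton-model caveat) are the standard argument and are fine at this level of detail, modulo the usual care needed when a function variable occurs with more than one argument tuple.
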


A richness of equivalence rules for $IF$ formulas was developed (mainly) in \cite{Dec2005}, \cite{CaiDecJan2009}, \cite{Man2009}, \cite{ManSanSev2011}, \cite{Bar2013}, \cite{Sev2014}; we list here those rules that will be needed in the following. These rules act on formulas, so our notion of truth-equivalence of sentences does not suffice to describe them. Many alternatives have been considered in the literature for what regards equivalence of $IF$ formulas; 
  the simplest option would be to consider two formulas $\psi,\theta$ equivalent if in all structures they are satisfied by the same teams, \emph{provided that we only consider teams whose variable domain contains $FV(\psi)\cup FV(\theta)$}. However, many important equivalence rules of $IF$ logic are context-dependent: they hold only if some kinds of restrictions are imposed on the contexts in which the formulas may appear; that is, these rules only hold if the formulas do not occur in the scope of certain quantifiers. Thus, it is in many occasions more convenient to consider notions of equivalence relativized to contexts. 
We do it here in the style of Caicedo, Dechesne and Janssen (\cite{CaiDecJan2009}), specifying which variables should not appear in the context.\footnote{Actually, our definitions will slightly differ from those of \cite{CaiDecJan2009}. Our definitions are meant to extend truth-equivalence to open formulas,  \cite{CaiDecJan2009} aimed at extending a stricter notion called ``strong equivalence''.}  


\begin{df}
Let $\psi$ be an $IF$ formula, $Z$ a finite set of variables. Then $\psi$ is \textbf{$Z$-closed} if $FV(\psi)\cap Z = \emptyset$.
\end{df}

\begin{df}
Let $\psi,\chi$ be $IF$ formulas, let $Z$ be a finite set of variables. We say that $\psi$ and $\chi$ are \textbf{$Z$-equivalent}, and we write $\psi\equiv_Z\chi$, if they are $Z$-closed and, furthermore, $M,X\models \psi \Leftrightarrow M,X\models \chi$ for all structures $M$ and for all teams $X$ that are suitable for $\psi$ and $\chi$ and such that $dom(X)\cap Z = \emptyset$. 

If we have an explicit listing $\{z_1,z_2,\dots,z_n\}$ of $Z$, we can also write, for brevity, $\psi\equiv_{z_1z_2\dots z_n}\chi$.
\end{df}

So, the subscripts to the equivalence symbols mean that the equivalence only holds for those teams whose domain does not contain any of the subscripted variables; and also, in order to avoid triviality, the subscripted variables must not occur free in the formulas under consideration. This notion of equivalence of formulas works well because of the following two facts:

\begin{prop} \label{TEOEQSENT} 

 (\cite{CaiDecJan2009}, remarks on page 22)
If $\varphi$ and $\chi$ are $IF$ sentences, then, for any finite set $Z$ of variables, $\varphi\equiv_Z \chi$ if and only if $\varphi\equiv \chi$.


\end{prop}

\begin{prop} \label{TEOSUBEQ} (\cite{CaiDecJan2009}, Theorem 6.14)
 If $\varphi,\psi,\psi'$ are $IF$ formulas, $Z$ a finite set of variables, $\varphi'$ is obtained from $\varphi$ by replacing a subformula occurrence of $\psi$ with $\psi'$, and $\psi\equiv_Z\psi'$, then $\varphi\equiv_Z\varphi'$.
\end{prop}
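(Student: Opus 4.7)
The plan is to prove this by structural induction on $\varphi$, preceded by a variable-hygiene normalization. Throughout I tacitly assume that $\varphi$ (and hence $\varphi'$) is $Z$-closed, since the definition of $\equiv_Z$ requires $Z$-closure of both sides.

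\textbf{Preprocessing.} Using Proposition~\ref{TEOEQSENT}(2) I first replace $Z$ by $Z \cap Bound(\psi) \cap Bound(\psi')$, and then $\alpha$-rename those bound variables of $\varphi$ that appear along the path from the root down to the distinguished occurrence of $\psi$, so that none of them belongs to $Z$. This preliminary move prevents the quantifier step of the induction from clashing between the newly bound variable and the subscript set.

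\textbf{Induction.} The base case is immediate: if $\varphi$ is a literal, then either $\varphi = \psi$ and the conclusion is the hypothesis itself, or the distinguished subformula position is untouched and $\varphi' = \varphi$. For a binary connective $\varphi = \chi_1 \circ \chi_2$ with the replacement occurring inside, say, $\chi_1$, the subformula $\chi_1$ is $Z$-closed because $FV(\chi_1) \subseteq FV(\varphi)$, so the induction hypothesis yields $\chi_1 \equiv_Z \chi_1'$; the clause for $\land$ (both conjuncts evaluated on the same team) and the clause for $\lor$ (arbitrary splittings $Y \cup Y' = X$, whose components inherit the domain of $X$ and hence avoid $Z$) then lift this equivalence to $\varphi \equiv_Z \varphi'$, with symmetric reasoning for $\models^-$. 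For the quantifier case $\varphi = (Qv/V)\chi$, the preprocessing guarantees $v \notin Z$, so the enlarged teams $X[M/v]$ and $X[F/v]$ still have domains disjoint from $Z$, and the induction hypothesis applied to $\chi$ and $\chi'$ finishes this case (dualising for $\models^-$).

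\textbf{Main obstacle.} The genuinely subtle part is the $\alpha$-renaming in the preprocessing step: in IF logic bound variables are not mere placeholders, since slash sets refer to variable names, and a naive renaming could alter the $V$-uniformity conditions of unrelated existential quantifiers elsewhere in $\varphi$. A correct renaming must simultaneously update every slash set that mentions the renamed variable and must choose fresh names disjoint from every slash set in $\varphi$. Establishing that such a disciplined renaming preserves strong equivalence (and therefore $\equiv_Z$) is the real technical hurdle; once that auxiliary lemma is available, the induction itself is a routine case analysis on the shape of $\varphi$.
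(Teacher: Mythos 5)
This proposition is not proved in the paper at all: it is imported verbatim from Caicedo--Dechesne--Janssen (their Theorem~6.14), so there is no in-paper argument to compare yours against, and your attempt must stand on its own. Its induction skeleton is the standard and correct core of such a proof: the connective cases go through because a split $Y\cup Y'=X$ inherits $dom(X)$, and the quantifier case goes through because $X[M/v]$ and $X[F/v]$ still avoid $Z$ when $v\notin Z$. The gap is entirely in your preprocessing, and it is a real one. The $\alpha$-renaming you invoke is itself the replacement of a subformula occurrence deep inside $\varphi$ by an equivalent one, so justifying that it preserves $\equiv_Z$ requires lifting a local renaming equivalence through the surrounding context --- which is precisely the substitution theorem you are trying to prove. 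Worse, the auxiliary lemma you defer to (``a disciplined renaming preserves strong equivalence'') is false in $IF$ logic: renaming $u$ to $v$ preserves satisfaction only on teams whose domain avoids both $u$ and $v$, which is exactly why Proposition~\ref{VARIANT} carries the subscript $uv$ rather than $^*$. The obstruction is not slash-set bookkeeping but signalling through the team's domain: a variable that merely sits in $dom(X)$ can carry information to a slashed existential even when it occurs nowhere in the formula, and no choice of fresh names removes that relativization.

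The renaming is also aimed at the wrong target, because the situation it is meant to neutralize --- a quantifier $(Qv/V)$ with $v\in Z$ superordinate to the occurrence --- is exactly where the proposition, read literally for arbitrary $IF$ formulas, fails. Take $Z=\{v\}$, $\psi=\forall v(\exists w/\{x\})(w=x)$ and $\psi'=\forall v'(\exists w/\{x\})(w=x)$: both are $\{v\}$-closed and agree on every team whose domain omits $v$ (each says that $x$ is determined by the remaining variables of the domain), so $\psi\equiv_{\{v\}}\psi'$; yet $\forall x\exists v((v=x)\land\psi)$ is false and $\forall x\exists v((v=x)\land\psi')$ is true on every structure with at least two elements. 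What rescues the theorem is the regularity convention under which Caicedo--Dechesne--Janssen work and under which this paper applies it: once you shrink $Z$ to $Z\cap Bound(\psi)\cap Bound(\psi')$ by Proposition~\ref{TEOEQSENT}(2) --- a step you already take --- regularity guarantees that no quantifier on the path to the occurrence binds a variable of the shrunken set, so your quantifier case applies directly and no renaming is needed. As written, your proof neither closes the circularity of the preprocessing nor blocks the counterexample; replacing the renaming by an explicit appeal to regularity would repair it.
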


We can now present the equivalence rules that we shall need. We just point out that many of the context restrictions of each rule can in practice be ignored when applying the rules within a regular formula.

\begin{prop}[Renaming]\label{VARIANT}\footnote{A note of warning. This proposition is nothing else than Theorem 6.12 of \cite{CaiDecJan2009}. If the reader compares our formulation with the rule stated in that paper, (s)he might think that we have forgotten a clause; that we should have specified that $u$ must not be in $U$. Yet, this is already implied by $uv$-closedneess: the formulation in \cite{CaiDecJan2009} was redundant.}
Suppose $u$ is not bound in $\psi$. If $v$ does not occur in $(Qu/U)\psi$, then
\[
(Qu/U)\psi \equiv_{uv} (Qv/U) Subst(\psi,u,v).  
\]
where $Subst(\psi,u,v)$ is the formula obtained by replacing, in $\psi$, all free occurrences of $u$ with $v$.
\end{prop}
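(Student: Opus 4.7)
The plan is to isolate the combinatorial heart of the claim in a renaming lemma at the team-semantic level, and then derive the quantifier case by unpacking the outer clause once.

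\textbf{Step 1 (Setup).} By the definition of $\equiv_{uv}$, I would first check that both $(Qu/U)\psi$ and $(Qv/U)Subst(\psi,u,v)$ are $\{u,v\}$-closed, and then that they have the same positive and negative satisfaction on every suitable team $X$ with $dom(X)\cap\{u,v\}=\emptyset$. The closedness claim is routine bookkeeping, using the paper's definition $FV((Qw/W)\chi) = FV(\chi)\setminus\{w\}\cup W$: $v$ does not occur anywhere in $(Qu/U)\psi$ by hypothesis, and $Subst$ replaces every free occurrence of $u$ by $v$, including inside slash sets.

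\textbf{Step 2 (Renaming Lemma).} The core of the argument is the following auxiliary statement, to be proved by induction on $\chi$: \emph{if $u$ is not quantified in $\chi$ and $v$ does not occur in $\chi$, and $Y$ is any team with $u\in dom(Y)$ and $v\notin dom(Y)$, then, setting $Y^\dagger := \{\, s^\dagger : s\in Y\,\}$, where $s^\dagger$ agrees with $s$ on $dom(Y)\setminus\{u\}$ and satisfies $s^\dagger(v):=s(u)$, one has $M,Y\models\chi$ iff $M,Y^\dagger\models Subst(\chi,u,v)$, and analogously for $\models^-$.} The atomic base case reduces to the observation that the value of any term under $s$ equals the value of its syntactic substitution under $s^\dagger$. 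Boolean connectives use that splits $Y=Y_1\cup Y_2$ correspond bijectively, via $s\mapsto s^\dagger$, to splits $Y^\dagger = Y_1^\dagger\cup Y_2^\dagger$. For a quantifier subformula $(Q'w/W)\chi'$ the hypotheses force $w\notin\{u,v\}$, so the duplicated team $Y[M/w]$ renames to $Y^\dagger[M/w]$; every $W$-uniform $F:Y\to M$ transports to a $W'$-uniform $F^\dagger:Y^\dagger\to M$ via $F^\dagger(s^\dagger):=F(s)$, where $W' = W$ if $u\notin W$ and $W' = (W\setminus\{u\})\cup\{v\}$ otherwise. Crucially, $W'$ is exactly the slash set produced by $Subst$, so the inductive hypothesis closes both the $\models$ and $\models^-$ cases, which are handled in parallel using the duality of the clauses.

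\textbf{Step 3 (Wrap-up).} Once the lemma is in hand, fix a suitable team $X$ with $dom(X)\cap\{u,v\}=\emptyset$. For $Q=\exists$, a $U$-uniform $F:X\to M$ witnessing $M,X\models(\exists u/U)\psi$ gives $X[F/u]$, whose renaming is precisely $X[F/v]$ (since $v\notin dom(X)$); the lemma then yields $M,X[F/v]\models Subst(\psi,u,v)$, and the same $F$ remains $U$-uniform, witnessing the right-hand side. The converse is symmetric. For $Q=\forall$, the duplicated team already satisfies $X[M/v] = (X[M/u])^\dagger$, reducing the clause directly to the lemma. Negative satisfaction is handled by the dual clauses.

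\textbf{The main obstacle} will be the quantifier step of the induction: one must verify that $Subst$ correctly rewrites slash sets of internal quantifiers, so that $Subst((Q'w/W)\chi',u,v)$ equals $(Q'w/W')Subst(\chi',u,v)$ with $W'$ as above, and that the uniformity condition transfers along the renaming bijection. Once one commits to the convention that $Subst$ acts on every free occurrence of $u$, slash sets included, both facts fall out immediately.
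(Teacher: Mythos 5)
The paper does not actually prove this proposition: as its own footnote says, it is imported verbatim as Theorem 6.12 of Caicedo--Dechesne--Janssen, so there is no in-paper argument to match yours against. Judged on its own terms, your proof is correct and is the natural direct argument in team semantics: the renaming lemma of Step~2, proved by induction with the bijection $s\mapsto s^\dagger$, is exactly the right combinatorial core, and your computation of the transported slash set $W'$ (namely $W$ if $u\notin W$, and $(W\setminus\{u\})\cup\{v\}$ otherwise) is the point where the proof could go wrong and does not: one checks that $s\sim_W s'$ holds in $Y$ iff $s^\dagger\sim_{W'}s'^\dagger$ holds in $Y^\dagger$, in both cases, so $W$-uniformity of $F$ and $W'$-uniformity of $F^\dagger$ coincide, and $W'$ is precisely what $Subst$ produces because every occurrence of $u$ in an inner slash set is free in $\psi$ (here you correctly use the hypothesis that $u$ is not quantified in $\psi$). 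The wrap-up in Step~3, with $(X[F/u])^\dagger=X[F/v]$ and the same $F$ remaining $U$-uniform because the outer slash set $U$ is untouched, is also right, as is the dual treatment of $\models^-$.

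Two small caveats. First, in Step~1 the $\{u,v\}$-closedness of the left-hand side requires $u\notin U$ (since $FV((Qu/U)\psi)=(FV(\psi)\setminus\{u\})\cup U$), and this is not literally among the stated hypotheses; the paper's footnote flags exactly this point, reading $u\notin U$ as built into the $uv$-closedness demanded by $\equiv_{uv}$. You should state explicitly that you are assuming it, rather than calling the closedness check routine. Second, your lemma should also record the (trivial but necessary) facts that $s\mapsto s^\dagger$ is injective and that the dagger operation commutes with duplication and supplementation at inner quantifiers $w\notin\{u,v\}$; you assert both, and both are true, but they are the only places where the hypothesis ``$v$ does not occur in $(Qu/U)\psi$'' is genuinely consumed, so they deserve a line each in a full write-up.
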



When extracting a quantifier $(Qu/U)$, say, from a left disjunct (resp.conjunct), the variable $u$ must in general be added to the slash sets of the right disjunct  in order to prevent it to be used as a source of signals -- which could be used to circumvent the restrictions imposed by slash sets (see \cite{CaiDecJan2009}). However, using the form of extraction rule that we review below, we can avoid adding $u$ to \emph{empty} slash sets (i.e., we can preserve the first-order quantifiers).

\begin{df} \label{VERTICALSLASH}
Given an IF formula $\psi$, we define $\psi|_v$ to be the formula obtained by adding the variable $v$ to all \emph{nonempty} slash sets of $\psi$; and similarly for syntactical trees.
\end{df}

\begin{prop}[Strong quantifier extraction, a special case of Theorem 8.3 of \cite{CaiDecJan2009}]\label{STRONGEXTRACTION}
If $u$ does not occur in $\psi$ nor $U$, then:
\[
    (Qu/U)\varphi \circ \psi \equiv_u (Qu/U)(\varphi \circ \psi|_u)
\]
where $\circ$ is either $\land$ or $\lor$.
\end{prop}

We list two more useful equivalence rules, distribution of universal quantifiers (see \cite{ManSanSev2011}, 5.23) and quantifier swapping (\cite{ManSanSev2011}):

\begin{prop}[Distribution of universal quantifiers over conjunctions] \label{DISTRIBUTION}
For all $\varphi,\psi$ $IF$ formulas:
\[
   \hspace{10pt} \forall u(\varphi \land \psi) \equiv_u \forall u\varphi \land \forall u\psi.
\] 
\end{prop}   

\begin{prop}[Quantifier swapping]\label{SWAP}
Let $Q,Q'$ be quantifiers, $\psi$ an $IF$ formula. Then:
\[
   (Qu/U)(Q'v/V\cup\{u\})\psi \equiv_{uv} (Q'v/V)(Qu/U\cup\{v\})\psi.
\]
\end{prop}

\noindent Observe that adjacent quantifiers of the same kind are not always allowed to commute: for example, notice that in the left member of the above formula we require $u$ to occur in the slash set of $v$. It is also worth noting that the usual first-order rule for swapping quantifiers of the same type is not a special case of this scheme; indeed, $IF$ logic has a second rule which allows swapping certain quantifiers of the same type (\cite{CaiDecJan2009}, Theorem 13.3); we will only make use of the following very special case:

\begin{prop}[Swapping first-order universal quantifiers]\label{UNISWAP}
For any $IF$ formula $\psi$: 
\[
\forall u \forall v\psi \equiv \forall v \forall u\psi. 
\]
\end{prop}

\noindent Finally, we look at two rules which are specific of $IF$ quantification.\footnote{The previous rules hold under a more restrictive notion called \emph{strong equivalence}, under which $IF$ logic is treated as a three-valued logic. The two remaining rules, instead, owe their validity to the fact that we are only analyzing the truth of sentences, not their falsity.}


\begin{prop}[Slash sets of universal quantifiers are irrelevant]  \label{UNIDEP}
For any $IF$ formula $\psi$, 
\[
(\forall u/U)\psi \equiv \forall u \psi.
\]
\end{prop}

\begin{prop}[Purely existential slash sets are irrelevant]  \label{DEPEX}
Suppose two regular IF sentences $\varphi,\varphi'$ differ only for one quantifier, which is $(\exists v/V)$ in $\varphi$ and $\exists v$ in $\varphi'$; suppose furthermore that all variables in $V$ are existentially quantified.  Then $\varphi\equiv\varphi'$.
\end{prop}

   \section{Syntactical trees: basic definitions} \label{EQUIVTREESSEC1}

We define here the class of syntactical trees which is of our interest -- we are seeking for the simplest possible generalization of what a prefix is if we do not restrict attention to prenex sentences. This requires including in the prefixes also connectives, and taking into account the binary ramifications they induce in the structure of formulas. This class of trees (the \emph{positive initial} trees) has already been introduced elsewhere (\cite{Bar2013}), but here we will require some more precision in the formal details. For technical ease, in our trees we will allow occurrences of the gap symbol $[\phantom{a}]$. Each gap symbol is a marker for a node to which (the tree of) some $IF$ formula might potentially be attached. By a \textbf{tree} here we mean a finite partially ordered set $(T,\preceq_T)$ with a minimum element (the \textbf{root}) and such that, for each $t\in T$, the set of predecessors of $t$ is linearly ordered by $\preceq_T$. 

\begin{df}
A \textbf{syntactical tree} is a (finite) tree whose nodes are occurrences of atomic formulas, negation, conjunction, disjunction, quantifiers (with their slash sets), and the gap symbol $[\phantom{a}]$, and which respects the following constraints: 
1) atomic formulas are leaves (i.e., they have no successors) \\
2) gap symbols are leaves \\
3) each negation has exactly one successor \\
4) each binary connective has exactly two successors \\
5) each quantifier has exactly one successor. 
\end{df}
It should be clear in what sense to each $IF$ sentence we can associate \emph{its} syntactical tree (which is, of course, a tree without gaps), and in the following we will always indentify a formula with its tree. 
Here are some examples of trees that are not the syntactical tree of any formula, since they contain gaps:

\vspace{5pt}

\Tree  [.$\forall x$  [.$\exists y$ [.$[\phantom{a}]$ ] ]  ] 
\Tree  [.$\forall x$  [.$\lor$ [.$[\phantom{a}]$ ] [.$\gap$ ] ] ]   
\Tree  [.$(\forall x/\{z\})$  [.$\lor$ $A(x)$ [.$\neg$ [.$[\phantom{a}]$ ] ] ]  ] 
\Tree  [.$\forall x$  [.$\lor$ [.$[\phantom{a}]$ ] [.$\land$ $B(y)$ $C(z)$ ] ] ]   
\Tree  [.$\lor$  [.$\exists x$ $A(x)$ ]  [.$\exists y$ [.$[\phantom{a}]$ ]  ] ]   

\vspace{5pt}

We will use some terminology which is standard for trees:

\begin{df}\label{DEFCHAINBRANCH}
If  $(T,\preceq_T)$ is a syntactical tree, a \textbf{chain} of $(T,\preceq_T)$ is  a pair $(S, \preceq_S)$, where $S\subseteq T$, $\preceq_S$ is the restriction of $ \preceq_T$ to $S$, and  $\preceq_S$ linearly orders $S$. 

A \textbf{branch} of $(T,\preceq_T)$ is a maximal chain of $(T,\preceq_T)$.\footnote{I.e., a chain $(S, \preceq_S)$ such that, for each $t\in T\setminus S$, the set $S\cup\{t\}$, together with the restriction of $\preceq_T$ to $S\cup\{t\}$, is not a chain of $(T,\preceq_T)$ ).} 
\end{df}

The notion of a quantifier prefix is generalized by the following class of syntactical trees:

\begin{df}
A \textbf{positive initial tree}, or \textbf{tree prefix}, is a syntactical tree which contains no occurrence of atomic formulas nor of negation.
\end{df}
Said otherwise, a positive initial tree can be obtained from the syntactical tree of some negation normal $IF$ formula by removing from it all nodes that correspond to literals.
 The word \emph{positive} refers to the fact that we do not allow negation symbols to occur in the tree, while the word  \emph{initial} refers to the fact that none of the branches of the tree end with an atomic formula 
(among the trees in the previous picture, only the first and second are positive and initial; the fourth and fifth are positive but not initial). This generalizes the fact that a quantifier prefix is an initial segment of the syntactical tree of a formula; the obvious generalization of ``initial segment'' for a tree is the notion of \emph{down set}.

\begin{df}
A \textbf{down set} $Y$ of a tree $T$ is $Y\subseteq T$ such that
\[
\forall y\in Y\forall t\in T(t\preceq_T y \rightarrow t\in Y).
\] 
\end{df}
\begin{df}
Let $T$ be a syntactical tree, and $T^-$ the tree obtained by removing the gap nodes from $T$. An IF formula $\varphi$ \textbf{begins with $T$} if $T^-$ is a down set of the syntactical tree of $\varphi$.
\end{df}
\begin{df}
Given an $IF$ formula $\varphi$, we define \textbf{the tree prefix of $\varphi$}, and denote it as $PTr(\varphi)$, to be the largest positive initial tree $T$ such that $\varphi$ begins with $T$.
\end{df}


For example, we can say that the formula $\varphi = \forall x(A(x)\lor \neg B(x))$ begins with the tree

\vspace{5pt}

\Tree  [.$\forall x$  [.$\lor$ $[$\phantom{a}$]$ [.$\neg$ [.$B(x)$ ] ] ]  ]  \\

\vspace{5pt}

\noindent even though this tree is not positive initial (it contains a negation, and also an atomic formula). Instead, $PTr(\varphi)$ is

\vspace{5pt}

\Tree  [.$\forall x$  [.$\lor$ $[$\phantom{a}$]$ [.$[$\phantom{a}$]$ ] ]  ] 

\vspace{5pt}

\noindent We can also write these kinds of trees in linear notation; e.g., the tree above is $\forall x([\phantom{a}]\lor[\phantom{a}])$.




The notion of \emph{subtree} is in a sense dual to the notion of a prefix.

\begin{df}
An \textbf{up set} $Y$ of a tree $T$ is $Y\subseteq T$ such that
\[
\forall y\in Y\forall t\in T(y\preceq_T t \rightarrow t\in Y).
\] 
\end{df}

\begin{df}
A \textbf{subtree} $S$ of a tree $T$ is a suborder of $T$ which 1) is an up set of $T$, and 2) has a root (i.e. a minimum according to $\preceq_T$). 
\end{df}

So, a subtree of $T$ is obtained whenever we choose a node $t$ of $T$ and we pick all nodes that follow $t$ in the ordering $\preceq_T$. We might also say that such a subtree is made of $t$ and of all the nodes of $T$ which are in the scope of $t$. 

\begin{notation}
Given a formula $\varphi$ (resp. a quantifier prefix $\vec Q$, a syntactical tree $T$), we denote the relation of scope between pairs of logical operators as $\prec_\varphi$ (resp. $\prec_{\vec Q}$, $\prec_T$). So, for example, $\forall x \prec_\varphi \exists y$ means that (a specific occurrence of) $\exists y$ occurs within the scope of (a specific occurrence of) $\forall x$ in formula $\varphi$. In the case of trees,  $\prec_T$ is just the strict partial order which is associated to the ordering $\preceq_T$ of the tree.
\end{notation}

Two quantifier prefixes $R$, $S$ can obviously always be concatenated in order to obtain a longer prefix $RS$; this notation can sometimes be extended to trees:

\begin{notation}
Whenever $R$ is a finite linearly ordered syntactical tree whose last element is a gap, and $S$ is a tree, we can unambiguosly denote as $RS$ the concatenation of $R$ and $S$, that is, the tree obtained by removing the last (gap) node of $R$ and replacing it with the tree $S$.
\end{notation}

\begin{df}
An \textbf{incomplete branch} of $T$ is a branch whose last element is a gap $[\phantom{a}]$. The set of the incomplete branches of $T$ will be denoted as $IBranch(T)$. 
\end{df}


\noindent So, there is an obvious bijection between incomplete branches of a tree $T$, and the occurrences of gap symbols in $T$.

The main results of \cite{Sev2014} worked properly only for a restricted class of quantifier prefixes, the \emph{sentential} class. These are prefixes that can in principle give rise to a sentence when they are prefixed to some appropriate quantifier-free formulas. An example of a \emph{non}-sentential prefix is $\forall x(\exists y/z)$. Any formula which begins with this prefix is not a sentence, because it has a free variable, $z$. We define here a class of tree prefixes which incorporates both the notions of sententiality and \emph{regularity} (which mainly amounts to forbidding requantification of a variable). For brevity (and since sententiality is an obvious requirement for the kind of analysis we pursue here) such trees will be simply called ``regular''.


The definitions of the sets of  free and bound variables, which we have given for formulas, extend in a straightforward way to trees, if one reads $(Qu/U)\prec_T(Qv/V)$ as ``$(Qv/V)$ is in the scope of $(Qu/U)$''. We write $FV(T)$, resp. $Bound(T)$ for these sets. 


\begin{df} \label{REGU2}
A tree prefix $T$ is \textbf{regular} if the following hold:\\
0) If a quantifier $(Qv/V)$ occurs in $T$, and a variable $u$ is in $V$, then there is in $T$ another quantifier $(Q'u/U)\prec_T(Qv/V)$ (sententiality).\\
1) No variable occurs both free and bound in $T$.\\
2) If a quantifier $(Qv/V)$ occurs in $T$, then it is not in the scope of any quantifier of the form $(Qv/W)$. 
\end{df}
 This definition slightly clashes with our earlier definition of a regular $IF$ formula, which did not include requirement $0$). However, in this paper we deal almost exclusively with sentences; and notice that sentences automatically satisfy conditions 0) and 1).

Now we specify what $IF$ sentences can be obtained by filling the incomplete branches of a tree with (the trees of) quantifier-free formulas. The first of the following definitions generalizes the operation of postfixing an open formula to a quantifier prefix;  here we may need to attach \emph{many} formulas, one for each gap in the tree.

\begin{df} \label{DEFCF}
Let $T$ be a syntactical tree.  We call any function $e:IBranch(T)\rightarrow QFree$ a \textbf{completing function for $T$}.


 A completing function is \textbf{sentential} if, for each $R\in IBranch(T)$, we have $FV(e(R))\subseteq Bound(R)$. 


\end{df}

\begin{df}
For any tree $T$ and completing function $e$ (for $T$), we call $\hat e(T)$ the formula obtained replacing, for each $R\in IBranch(T)$,  the gap at the of $R$ with  $e(R)$. We will call the formula $\hat e(T)$ a \textbf{completion} of $T$.

If $S$ is a subtree of $T$, we denote by $\hat e(S)$ the smallest subformula of $\hat e(T)$ which contains $S$.
\end{df}

It should be clear that, if $T$ is a regular tree prefix, then asserting that $e:IBranch(T)\rightarrow QFree$ is sentential amounts to saying that $\hat e(T)$ is a sentence.

\begin{example}
1) The simplest possible examples of tree prefixes are the quantifier prefixes. For instance, let $T = \exists y(\forall x/\{y\})[\phantom{a}]$. There is only one gap, so only one incomplete branch, which is $T$ itself (with its ordering); so, a completing function for $T$ is just a function from the singleton set $\{T\}$ to $QFree$. 
Set for example $e(T) = P(x)\land Q(x,y)$. Applying this completing function to $T$, one obtains the formula $\hat e(T) = \exists y(\forall x/\{y\})(P(x)\land Q(x,y))$. Notice that we have $FV(e(T)) = \{x,y\} = Bound(\hat T)$: $e$ is sentential, and indeed $\hat e(T)$ is a sentence. 





2) Consider a tree of the form $T' = \exists y([\phantom{a}]\lor(\forall x/\{y\})[\phantom{a}])$, which is not linear. It has two branches. Call $A$ the branch  containing the leftmost gap, and $B$ the other one. A completing function for $T'$ will be a function $j:\{A,B\}\rightarrow QFree$, for example
\begin{equation*}
\left\{\begin{array}{l}
j(A)=P(y,z)\\
j(B)=Q(x,y)
\end{array}
\right.
\end{equation*}

\noindent which is not sentential because of $z$ occurring free in $j(A)$. The result of the completion is the (open) formula $\hat j(T') = \exists y(P(y,z)\lor (\forall x/\{y\})Q(x,y))$ with free variable $z$.

If we instead define a completing function $k$ by $k(A):= S(y), k(B):=Q(x,y)$, then $k$ is sentential, and $\hat k(T') = \exists y(S(y) \lor (\forall x/\{y\})Q(x,y))$ is a sentence. 
\end{example}

  \section{Complexity of $IF$ tree prefixes} \label{COMPLEXITY1}

We assume the reader is familiar with basic notions of complexity theory, in particular reductions, hardness, completeness and the complexity classes  FO, L, NL, P and NP. In the following, when we speak of NP-completeness, we are thinking of completeness up to polynomial reductions (although, in most cases, much weaker reductions are adequate). It is known that the following inclusions hold:
\[
\text{FO}\subseteq\text{AC}^0\subset\text{TC}^0\subseteq\text{L}\subseteq\text{NL}\subseteq\text{P}\subseteq\text{NP}
\] 
where AC$^0$ and TC$^0$ are two classes of computation by circuits (AC$^0$: problems decidable by boolean circuits of unbounded fan-in and constant depth, TC$^0$: problems solvable by threshold circuits of constant depth). $\text{AC}^0\subset\text{TC}^0$ is one of the few strict inclusions that are known of within NP; it has the interesting consequence that first-order formulas cannot even express all L problems.

We study the complexity of $IF$  positive initial trees, in the sense given by the following definitions (given along the lines of \cite{BlaGur1986}).

\begin{df} \label{DEFC}
To each $IF$ sentence $\varphi$, we associate the class $F_\varphi = \{M \ | \ M \text{ finite, }$ $M\models \varphi\}$ of its finite models.
Given a syntactical tree $T$, we define the \textbf {complexity class of $T$}:
\begin{equation*}
\text{\emph C}(T) = \{F_{\hat e(T)} \ | \text{ } e \text{ is a sentential completing function for } T\}.
\end{equation*}
 If we have \emph{C}$(T) =$ \emph{C}$(T')$, resp. \emph{C}$(T) \subseteq$\emph{ C}$(T')$... then we say that $T$ is as complex as $T'$, resp. $T$ is less complex than $T'$... 
\end{df}

The complexity classes $C(T)$, as defined above, are, from the set-theoretical point of view, metaclasses. If the reader is worried by this point, (s)he just has to replace the above definition of $F_\varphi$  with $\{[M] \ | \  M \text{ finite, }M\models \varphi\}$, where $[M]$ is a fixed representative of the isomorphism class of $M$, having as domain a subset of $\mathbb{N}$. All finite models have isomorphic copies of this kind, and $\{[M] \ | \ M \text{ finite, }M\models \varphi\}$ is a set.

\begin{df} 
The (``data complexity'' version of) the model-checking problem for an $IF$ sentence $\varphi$ is the problem of establishing whether $M\models \varphi$ when a (representation of a) finite structure $M$ is given as input. 
\end{df}



\begin{df}
  We shall say that a regular tree prefix $T$ \textbf{is in complexity class \emph{K}} if for all sentential completing functions $e$ the model-checking problem for $\hat e(T)$ is in \emph{K} (equivalently: if $\text{\emph C(T)}\subseteq \text{\emph K}$).

We say that $T$ is \textbf{\emph{K}-hard}, or that it \textbf{encodes a \emph{K}-hard problem}, if there is at least one sentential completing function $e$ such that the model-checking problem for $\hat e(T)$ is \emph{K-hard} (equivalently: if $\text{\emph C(T)}\cap \text{\emph K-hard} \neq \emptyset$).

If $T$ is in \emph{K} and it is \emph{K}-hard, we say it is \textbf{\emph{K}-complete}\footnote{We might say, more properly, K-complete \emph{up to reduction closure}. Even if a tree prefix $T$ is K-complete in this sense, there may be problems from K which are not definable by sentences that begin with $T$. \cite{BlaGur1986} also use the term \emph{mighty} to refer to an NP-complete prefix.} (equivalently: if $\text{\emph C(T)}\cap \text{\emph K-complete} \neq \emptyset$ \emph{and} $\text{\emph C(T)}\subseteq \text{\emph K}$).
\end{df}


\section{A rough classification of tree prefixes} \label{SECROUGH}

We are now in the condition to enunciate in our framework the dichotomy result given by Sevenster (\cite{Sev2014}, Theorem 5.1), restricted to the case of $IF$ regular prefixes:
\begin{prop} \label{OLDDICHOTOMY}
Every regular $IF$ quantifier prefix either encodes an \emph{NP}-complete problem, or it is in the class $\operatorname{FO}$ of first-order definable problems.
\end{prop}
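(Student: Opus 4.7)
The plan is essentially to observe that for a linear tree (a quantifier prefix) our framework collapses to the one used by Sevenster, so that Proposition \ref{OLDDICHOTOMY} is a direct translation of \cite{Sev2014}, Theorem 29, rather than a new result.

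First I would unpack the definitions in the special case where $T = \vec Q$ is a regular quantifier prefix. Then $\hat{\vec Q}$ has exactly one gap node, so $Path(\vec Q)$ is a singleton $\{P\}$ and a completing function $e$ is determined by a single formula $e(P)\in Reg(IF)$. Weakness means $e(P)$ is quantifier-free; sententiality means $FV(e(P))\subseteq Bound(P) = Bound(\vec Q)$; regularity-preservation is automatic for weak $e$. Hence the class of sentences obtained by applying weak sentential completing functions to $\vec Q$ is exactly the class $\{\vec Q\,\psi \mid \psi\text{ quantifier-free, regular, with free variables among }Bound(\vec Q)\}$, which is precisely the class of sentences that Sevenster postfixes to $\vec Q$ when defining the complexity class of the prefix.

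Next I would invoke Sevenster's dichotomy (\cite{Sev2014}, Theorem 29) in this restricted setting: for every regular $IF$ prefix $\vec Q$, either every such $\vec Q\,\psi$ is equivalent to a first-order sentence (so $\vec Q$ is in \textbf{FO}), or there exists at least one quantifier-free $\psi$ (with $FV(\psi)\subseteq Bound(\vec Q)$) such that $\vec Q\,\psi$ describes an \textbf{NP}-complete problem. In the first case $C(\vec Q)\subseteq \textbf{FO}$, so the tree is in $\textbf{FO}$ by Definition 6.2. In the second case, that particular weak sentential completing function $e$ with $e(P)=\psi$ witnesses $C(\vec Q)\cap \textbf{NP}\text{-complete}\neq\emptyset$; and since IF logic captures NP (Proposition \ref{FAGIN}), $C(\vec Q)\subseteq\textbf{NP}$, giving NP-completeness of the tree.

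The only real step is the bookkeeping match between the two formalisms; there is no combinatorial obstacle because the prefix case contributes exactly one path, so all of the subtle machinery of bijections $\iota$, regularity-preservation, and multi-gap completing functions developed in the previous sections trivializes. The interesting work lies ahead, in the non-prenex case, where this easy reduction to Sevenster is no longer available and one must use the reduction calculus of Sections \ref{EQUIVTREESSEC2} and \ref{SECPRTR} to push prefix-level arguments through arbitrary positive initial trees.
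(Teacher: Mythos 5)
Your proposal is correct and matches the paper's treatment: Proposition \ref{OLDDICHOTOMY} is stated there as a direct import of Sevenster's Theorem 29 into the tree formalism, with no separate proof given, so the only content required is exactly the bookkeeping you supply (a regular prefix has a single path, weak sentential completing functions coincide with Sevenster's postfixed quantifier-free formulas, and Proposition \ref{FAGIN} gives the NP upper bound). Nothing further is needed.
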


This result can be stated in a stronger form, saying 1) that the FO prefixes are equivalent, in a rather strong sense, to syntactically first-order prefixes, and 2) giving a complete (and effective) classification of the NP-complete vs. the FO prefixes. The NP-complete prefixes were classified according to the presence of particular patterns of dependence and independence among the quantifiers. We define analogous classes for syntactical trees. 

\begin{df}
We say that a quantifier $(Qy/Y)$, occurring in a regular formula or a regular tree, \textbf{depends on} $(Q'x/X)$ if $(Q'x/X)\prec(Qy/Y)$ and $x\notin Y$. If any of these two conditions does not hold, we say that $(Qy/Y)$ \textbf{does not depend on} $(Q'x/X)$.

For brevity, we will sometimes more simply write that $y$ depends (resp. does not depend) on $x$.
\end{df}

We define two ``branch properties'' which generalize the homonymous properties defined in \cite{Sev2014}. They identify branches which mimick  Henkin quantifiers (\cite{Hen1961}) and branches which contain signalling patterns (\cite{Hod97}, \cite{Jan2002}).

\begin{df}
A branch of a syntactical tree is \textbf{Henkin} if it contains quantifiers $(\forall x/X),(\exists y/Y),(\forall z/Z),(\exists w/W)$ such that:\\
1) $(\exists y/Y)$ depends on $(\forall x/X)$ but does not depend on $(\forall z/Z)$ nor $(\exists w/W)$\\
2) $(\exists w/W)$ depends on $(\forall z/Z)$ but does not depend on $(\forall x/X)$ nor $(\exists y/Y)$.
\end{df}
 
\begin{df}
A branch of a syntactical tree is \textbf{signalling} if it contains quantifiers $(\forall x/X),(\exists y/Y),(\exists z/Z)$ such that:\\
1) $(\exists y/Y)$ depends on $(\forall x/X)$ \\
2) $(\exists z/Z)$ depends on $(\exists y/Y)$ but does not depend on $(\forall x/X)$.
\end{df}

\begin{df}
Given any property $\mathbb{P}$ of branches, we say that a tree $T$ has property $\mathbb{P}$ if there is a branch of $T$ which has property $\mathbb{P}$.
\end{df}

Of course, not every interesting property of a syntactical tree is induced in this way from the properties of its branches; the next two properties of trees exemplify this point.

\begin{df}
A syntactical tree T is \textbf{first-order} if all of its slash sets are empty.
\end{df}

\begin{df}
A syntactical tree T is \textbf{primary} if it is neither Henkin nor signalling.
\end{df}

With these definitions, the classification result of Sevenster can be summarized more precisely thus:
\begin{prop} \label{SEVCLASSIFY} (\cite{Sev2014}) \\
1) Henkin and signalling (regular) $IF$ quantifier prefixes are \emph{NP}-complete. \\
2) Primary regular $IF$ quantifier prefixes are in $\operatorname{FO}$.
\end{prop}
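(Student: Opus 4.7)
The plan is to view this proposition as a translation of Sevenster's classification \cite{Sev2014} into the present tree-prefix language, and then to invoke the original arguments. The first step is to verify the translation: for a regular IF prefix $R$ (seen as a linear tree), $\hat R$ has exactly one gap, and therefore $Path(R)$ is a singleton whose unique element is $R$ itself. Thus the path-properties \emph{Henkin}, \emph{signalling}, and \emph{primary} reduce, in the prefix case, to conditions on the whole quantifier sequence; up to the obvious change of vocabulary these coincide with Sevenster's prefix-level notions. A weak, sentential completing function $e$ for $R$ is then nothing but the choice of a quantifier-free sentence-completion, and $\hat e(R)$ is exactly the kind of IF sentence studied in \cite{Sev2014}. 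Consequently $\text{C}(R)$ matches Sevenster's ``class of problems described by $R$-sentences.''

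For part (1), the NP upper bound is automatic from Proposition \ref{FAGIN}. For NP-hardness in the Henkin case, I would use the four quantifiers $(\forall x/X),(\exists y/Y),(\forall z/Z),(\exists w/W)$ witnessing the Henkin pattern: their Skolemization produces two existential function symbols $f(x)$ and $g(z)$ whose argument sets are disjoint, exactly as in a non-linear Henkin quantifier $\binom{\forall x \exists y}{\forall z \exists w}$. I would then pick $e$ to be the Blass--Gurevich encoding of SAT (or $3$-colorability), so that $\hat e(R)$ inherits NP-completeness directly from \cite{BlaGur1986}. In the signalling case, one exhibits, following Sevenster, a quantifier-free $e$ in which the Skolem function for $y$ records (``signals'') the value of $x$, which the Skolem function for $z$ can then read despite the formal independence; this yields the same NP-hardness encoding as the Henkin case.

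For part (2), I would follow Sevenster's dependence analysis. Suppose $R$ is primary. Any existential quantifier $(\exists v/V)$ in $R$ can only have, in $V$, variables that are themselves existentially quantified (otherwise a signalling or Henkin pattern would arise somewhere in $R$ -- the precise case analysis is the content of Sevenster's main combinatorial lemma). Theorem \ref{DEPEX} then lets one strip those slash sets, obtaining a weakly-reduced prefix whose only remaining slashes, if any, are on universal quantifiers; but universal slashes have no effect on truth, so the resulting prefix is truth-equivalent to a first-order prefix $R'$ with $\text{C}(R)\subseteq \text{C}(R')$. Since first-order prefixes express only FO problems, $R$ is in FO.

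The main obstacle is the combinatorial case analysis underlying the application of Theorem \ref{DEPEX} in part (2): one must verify that the \emph{primary} hypothesis really does guarantee that every slashed variable on an existential quantifier is itself existentially bound. This verification is exactly Sevenster's main lemma in the prefix setting, so I would not reprove it; instead, I would appeal to \cite{Sev2014} and observe that the auxiliary calculus developed in Sections \ref{EQUIVTREESSEC2}--\ref{SECPRTR} (in particular Theorems \ref{SWAPTREE} and \ref{DEPEX}) is exactly what is needed to carry the argument through in our formalism.
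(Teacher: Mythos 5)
This proposition is imported by the paper verbatim from \cite{Sev2014} (the surrounding text reads ``the classification result of Sevenster can be summarized thus''), so the paper supplies no proof of its own; your proposal is therefore being judged as a free-standing sketch. The set-up (a prefix has a single path, so the tree-level notions of Henkin, signalling and primary collapse to Sevenster's prefix notions, and $\text{C}(R)$ is his class of $R$-describable problems) and part (1) are essentially sound, though for a prefix that merely \emph{contains} a Henkin or signalling pattern amid other quantifiers you still need the dummy-quantifier argument (Sevenster's Lemma 20, reproved here as Lemma \ref{EXTENDLEMMA}) to neutralize the extra quantifiers before the hardness encoding applies; your sketch silently assumes the pattern is the whole prefix.

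Part (2), however, rests on a false claim. You assert that in a primary prefix every variable in the slash set of an existential quantifier must itself be existentially quantified, ``otherwise a signalling or Henkin pattern would arise,'' and you then apply Theorem \ref{DEPEX} to strip the slash sets. The prefix $\forall x(\exists v/\{x\})$, completed by $P(x,v)$, is a counterexample: it is regular and primary (a Henkin pattern needs two universal--existential pairs and a signalling pattern needs two existentials, neither of which is available), yet the slash set of $\exists v$ contains the universally quantified $x$. Theorem \ref{DEPEX} is inapplicable here, and this is not a degenerate case --- it is the typical one. The actual mechanism, both in \cite{Sev2014} and in the paper's tree-level generalization (Lemma \ref{LEMMANORM} and Theorem \ref{MODESTTREE}), is quantifier \emph{swapping}: an existential that is independent of a superordinated universal is pushed above that universal, and primality is used precisely to guarantee that no intervening existential blocks the swap (a blocking existential would witness a signalling or Henkin pattern). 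Only after all such swaps are the remaining slash sets purely existential, at which point \ref{DEPEX} finishes the reduction to a first-order prefix. Your closing remark that \ref{SWAPTREE} is ``what is needed'' gestures at the right tool, but the argument you actually wrote bypasses it by way of an incorrect combinatorial claim, so the proof of part (2) as stated does not go through.
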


The apparatus developed so far allows us to (partially) extend the result on primary prefixes to trees. We define three new classes of trees. We will show that the first two of these, the \emph{generalized Henkin} and the \emph{coordinated} class, delimit the space for searching for genuinely new NP-complete prefixes.

\begin{df}
A syntactical tree $T$ is \textbf{generalized Henkin} if it contains logical operators $\forall x,\forall y,\circ,(\exists u/U),(\exists v/V)$  ($\circ$ being either $\land$ or $\lor$) such that:\\
0) $\forall x,\forall y,(\exists u/U),(\exists v/V)$  do not all occur in a same branch\\
1) $\forall x\prec_T \circ \prec_T (\exists u/U),(\exists v/V)$\\
2) $u$ depends on $x$ but does not depend on $y$ nor $v$\\
3) $v$ depends on $y$ but does not depend on $x$ nor $u$. 
\end{df} 

\noindent We can think of this structure as a sort of Henkin prefix which is split over multiple branches of the tree. 
  A typical example of a generalized Henkin tree prefix is  $\forall x(\exists u[\phantom{a} ]\lor(\forall y(\exists v/\{x\})[\phantom{a}])$, in which the two existentials occur in distinct branches of the tree. This is a syntactical structure that can support phenomena of ``signalling by disjuction''. But notice that, in the general definition, $\circ$ is also allowed to be a conjunction. Clause 0) excludes the Henkin tree prefixes from this class, while clause 1) excludes some first-order trees, such as $(\forall x\exists u[\phantom{a} ])\lor(\forall y\exists v[\phantom{a}])$. 
If some operators $\forall x,\forall y, (\exists u/U),(\exists v/V)$ and $\circ$ (occurring in a tree) satisfy 0)-1)-2)-3) , we say that they form a \textbf{generalized Henkin pattern}. We will see many more examples in section \ref{GHTREES}, where we identify some classes of generalized Henkin prefixes that are NP-complete, and others that are in $\operatorname{FO}$. 


\begin{df}
A syntactical $IF$ tree is \textbf{coordinated} if it contains logical constants $(\forall  x/X),  \lor, (\forall y/Y), (\forall z/Z), (\exists u/U), (\exists w/W)$ such that:\\
0) $(\forall  x/X), (\forall y/Y), (\forall z/Z), (\exists u/U), (\exists w/W)$ do not all occur in a same branch\\
1) $\forall x\prec_T \circ \prec_T (\exists u/U), (\exists w/W)$\\
2) $u$ depends on $y$, but does not depend on $x, z, w$ \\
3) $w$ depends on $z$,  but does not depend on $x, y, u$\\
In case $(\exists u/U), (\exists w/W)$ occur in distinct disjuncts below $\lor$, we say the coordinated tree is of \textbf{first kind}; otherwise, we say it is of \textbf{second kind}.
\end{df}

\noindent Again, we say that five operators $(\forall  x/X),  \lor, (\forall y/Y), (\forall z/Z), (\exists u/U), (\exists w/W)$ occuring in a given tree form a \textbf{coordinated pattern} if they respect clauses 0)-1)-2)-3).  The main difference with respect to the case of generalized Henkin patterns is that there is one quantifier $(\forall x/X)$ which is not ``seen'' by the existential quantifiers. The other two universal quantifiers can occur either above or below the disjunction. In the case of generalized Henkin patterns, we forbid the possibility of having both universal quantifiers below the disjunction (or conjunction), so as to exclude trivially trivially first-order prefixes; here it is allowed for $\forall y$ and $\forall z$ to occur both below $\lor$ (as in the tree $\forall  x(\forall y (\exists u/x)\gap  \lor \forall z(\exists w/x)\gap)$ ), and this actually leads to second-order expressive power. More generally, in section \ref{CTREES} we will show that \emph{all} coordinated trees of first kind are NP-complete. The second  kind (e.g. the tree $\forall  x(\gap \lor(\forall y (\exists u/x)\gap  \land \forall z(\exists w/x)\gap)$  ) is less well understood; in section \ref{CTREES2} we show that some of these tree prefixes are first-order. 

\begin{df}
A regular tree is \textbf{modest} if it is neither  signalling, Henkin, generalized Henkin nor coordinated.
\end{df}

\begin{conv}
We apply the above terminology (Henkin, signalling, generalized Henkin, coordinated of first-second kind, modest) also to (regular) sentences, whenever the syntactical tree of the sentence has the corresponding property.

We will take sometimes the liberty, in the following, to call a sentence a tree, and to call a subformula a subtree (but not viceversa, at least in general).
\end{conv}

The importance of the taxonomy given in this section lies in the following result:

\begin{teo} \label{MODESTTREE}

a) All regular, modest $IF$ trees are in FO.

b) Every regular, modest $IF$ sentence can be transformed, by means of equivalence rules, into a first-order sentence. 
\end{teo}

\noindent The rather involved proof is postponed to Appendix \ref{APPMODEST}. An example is due to show that part b) of this theorem concretely improves over the criterion given in \cite{Sev2014} for recognizing first-orderness (i.e. checking whether a prenex $IF$ sentence is primary). A naive strategy for checking whether a (possibly non-prenex) regular $IF$ sentence is equivalent to some first-order sentence might be: transform the sentence into prenex normal form, and then check whether the resulting prenex sentence is primary (i.e. its syntactical tree is not Henkin nor signalling). However, this strategy gives some false negatives; in particular, it does not recognize some modest sentences as essentially first-order. Consider e.g. a sentence $\forall x(\forall y(\exists u/x)\psi \land \forall z(\exists v/x)\chi)$, with $\psi,\chi$ quantifier-free. By inspection one sees that this is a modest sentence. Furthermore, we can prove its equivalence to a first-order sentence as follows: first distribute $\forall x$ to obtain $\forall x\forall y(\exists u/x)\psi \land \forall x\forall z(\exists v/x)\chi$. Then by repeated applications of quantifier swapping (and by cancelling the slash sets of universal quantifiers) we obtain $\forall y\exists u\forall x\psi \land \forall z\exists v\forall x\chi$, which is first-order. These transformations apply to every sentence of this form, so what we have shown is that the tree $\forall x(\forall y(\exists u/x)\gap \land \forall z(\exists v/x)\gap)$ is in FO. However, the use of prenex form does not account for this fact. Our sentence can e.g. be transformed into  $\forall x\forall y(\exists u/x) \forall z(\exists v/x,y,u)(\psi \land\chi)$; here the prefix contains the Henkin pattern $\forall y,(\exists u/x),\forall z,(\exists v/x,y,u)$, and thus the sentence is not recognized as first-order using the criterion of \cite{Sev2014}. We also have to note that many different prenex forms can be obtained, and different prenex forms might yield different outcomes.\footnote{A systematical enumeration of prenex forms is further complicated by the fact that the \emph{strong} extraction rule presented here is not the only extraction rule that can be used in $IF$ logic. See \cite{ManSanSev2011}, Theorem 5.35 for a different, ``weak'' rule. The results in \cite{CaiDecJan2009} also show that a variety of intermediate versions of the extraction rule are available.}

\section{Extension lemma for tree prefixes}   \label{HIGHCOMP}

Because of theorem \ref{MODESTTREE}, we know that the search for tree prefixes with second-order expressive power can be limited to the Henkin, signalling, generalized Henkin and signalling classes. 
In the present section we move a first step in this direction: we generalize to tree prefixes the Extension Lemma of \cite{Sev2014} (which concerned quantifier prefixes); this is the main tool for the study of regular NP-complete prefixes. Roughly stated, it says that whatever can be expressed using a regular prefix, it can also be expressed using a larger regular prefix. In particular, if a regular tree can define NP-complete problems, also its extensions can. This result fails dramatically if irregular prefixes are allowed; for example, $\forall x\exists y(\exists z/x)$ is an NP-complete prefix, but its extension  $\forall x\exists y(\exists z/x)\forall x\forall y\forall z$ is first-order.
Also in the regular case, the proof is not straightforward: we must take carefully into account the signalling phenomena that may be introduced by the additional quantifiers.

First of all, we must make precise what we mean by an extension of a tree prefix.

\begin{df}
Let $T,U$ be tree prefixes. We say that $U$ \textbf{extends} $T$ if there is an injective function $\mu:T\rightarrow U$ such that: \\
1) for every quantifier node $(Qv/V)$ in $T$,  $\mu((Qv/V)) = (Qv/V')$ for some finite set of variables  $V'\subseteq V$; and for every connective node $c$, $\mu(c)$ is an occurrence of the same connective. \\
2) $\mu$ preserves the scope ordering $\prec_T$: if $c,d$ are two nodes in $T$, and $c\prec_T d$, then $\mu(c)\prec_U\mu(d)$.\\
3) if $(Qv/V)$ and $(Qw/W)$ occur in $T$, then the latter depends on the former if and only if $\mu((Qw/W))$ depends on $\mu((Qv/V))$.\footnote{The clause 2) makes our definition stricter than the corresponding notion for quantifier prefixes given by Sevenster. The correct generalization should allow some form of swapping of independent quantifiers. However, we will not need such subtleties.}   
\end{df}

In short, $U$ extends $T$ if it contains all the logical operators of $T$, possibly with swollen slash sets, and the operators that come from $T$ keep their original mutual relations of dependence and independence. 

\begin{example}
Let $T$ be $\forall x\exists y(\exists z/x)\gap$, and $U$ be $\forall w\forall x\exists y(\gap \land (\exists z/x)\gap)$. $U$ extends $T$ via the function $\mu$ that sends each operator from $T$ into an occurrence of the same operator in $U$, and the gap of $T$ into the right-hand gap of $U$. Notice that this extension preserves the property of being a signalling tree. However, $U$ has more intricate dependencies: for example, both $\exists y$ and $ (\exists z/x)$ now depend on $\forall w$.

Also $U':\forall w\forall x\exists y(\gap \land (\exists z/xw)\gap)$ is an extension of $T$, via a function $\mu'$ that differs from $\mu$ only in that it sends $(\exists z/x)$ to $(\exists z/xw)$. In this case, $z$ is independent of the new variable $w$; however the (signalling) dependencies between the operators that were already in $T$ have not changed. Instead, $U'':\forall w\forall x\exists y(\gap \land \exists z\gap)$ and $U''':\forall w\forall x\exists y(\gap \land (\exists z/x,y)\gap)$ are not extensions of $T$; the signalling pattern involving $x,y$ and $z$ is not anymore in place.
\end{example}

\begin{lem}[Extension Lemma for tree prefixes] \label{EXTENDLEMMA}
Let $T$, $U$ be regular $IF$ tree prefixes. Suppose $U$ extends $T$, and $\varphi$ is a completion of $T$. Then there are a completion $\varphi'$ of $U$ and, for every structure $M$ suitable for $\varphi$, an expansion $M'$ of $M$ such that $M\models\varphi$ iff $M'\models\varphi'$.  The signature of $M'$ only contains an additional constant symbol.
\end{lem}

The idea behind the proof is simple, but the details are rather involved; so the proof is postponed to Appendix \ref{APPEXTLEM}.

\begin{cor}
If a regular $IF$ tree prefix extends an \emph{NL}-hard (resp. \emph{P}, \emph{NP}-hard) positive initial tree, then it is \emph{NL}-hard (resp. \emph{P}, \emph{NP}-hard) itself. 
\end{cor}

\begin{proof}
Suppose the tree $T$ has a completion $\varphi$ which defines a C-hard problem (for C = NL, P or NP) over a class of structures K, and that tree $U$ extends $T$. Then by lemma \ref{EXTENDLEMMA} there is a completion $\varphi'$ of $U$ such that, for every structure $M$ suitable for $\varphi$, $M\models\varphi$ iff $M'\models\varphi'$, where $M'$ is $M$ expanded with a new constant. Since the increase in size of the structure, from $M$ to $M'$, is constant, the problem that $\varphi'$ defines on K$':=\{M'\ | \ M \in \operatorname K\}$  is C-hard.
\end{proof}

We can immediately apply this result to show that not only quantifier prefixes, but also tree prefixes are NP-complete if they are Henkin or signalling.

\begin{teo}  \label{HENKINCOMPLETE}
Any regular $IF$ tree prefix which is Henkin is \emph{NP}-complete.\footnote{We emphasize once more that, by saying that a tree prefix is NP-complete, we do not mean that it can express all NP problems. It just means that all problems it can define are in NP, and that \emph{at least one} NP-complete problem is defined by (a completion of) the tree prefix.}
\end{teo}

\begin{proof}
Any such tree $T$ extends a regular Henkin quantifier prefix. It was shown in \cite{Sev2014}, Theorem 22, that regular Henkin prefixes encode the NP-complete problem of 3-COLORABILITY.
So, from Lemma \ref{EXTENDLEMMA} it follows that $T$ is NP-hard. Since $IF$ sentences are in NP (Prop. \ref{FAGIN}), $T$ is NP-complete.
\end{proof}

\begin{teo} \label{SIGNALLINGCOMPLETE}
Any regular $IF$ tree prefix which is signalling is \emph{NP}-complete.
\end{teo}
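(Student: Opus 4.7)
The plan is to follow the same strategy as in the proof of Theorem \ref{HENKINCOMPLETE}: reduce to a known hardness result for prefixes via the Extension Lemma (\ref{EXTENDLEMMA}), and close the upper bound using Fagin's theorem (Proposition \ref{FAGIN}).

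First I would exhibit, inside the signalling tree $T$, a regular signalling \emph{prefix} $P$ which $T$ extends. By definition of signalling, there is a path of $T$ containing three quantifiers $(\forall x/X), (\exists y/Y), (\exists z/Z)$ such that $y$ depends on $x$, and $z$ depends on $y$ but not on $x$. Reading off this path and discarding everything else, one obtains a linear quantifier prefix with the same three operators in the same order (carrying along the induced slash sets, which witness the claimed dependencies). This prefix is itself signalling and regular, since $T$ is regular. Then one checks directly that $T$ extends $P$ in the sense of the definition preceding Lemma \ref{EXTENDLEMMA}: the embedding $\mu$ sends each quantifier of $P$ to its chosen occurrence in $T$, trivially preserving $\prec$ and the ``depends on'' relation (both are defined exactly in terms of superordination and slash-set membership, which are preserved by construction).

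Second, I would invoke Sevenster's classification result for regular $IF$ prefixes (the signalling case of Proposition \ref{SEVCLASSIFY}, i.e.\ Theorem 29 of \cite{Sev2014}), which tells us that the signalling prefix $P$ encodes an NP-hard problem (Sevenster in fact exhibits a specific NP-complete problem, e.g.\ a SAT-like encoding, that can be written using only a weak, sentential completing function applied to such a prefix). Then Lemma \ref{EXTENDLEMMA} lifts NP-hardness from $P$ to $T$: since $T$ extends $P$, for any weak, sentential completing function $f$ of $P$ we obtain a corresponding weak, sentential completing function $f'$ of $T$ and a logspace reduction of the model-checking problem of $\hat f(P)$ to that of $\hat{f'}(T)$. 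Combining this with Proposition \ref{FAGIN}, which places every $IF$ sentence in NP, we conclude that $T$ is NP-complete.

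The only real obstacle is verifying that the signalling structure witnessed along a single path of $T$ really does give rise to a \emph{prefix} that $T$ extends, i.e.\ that the chain of quantifiers isolated from the path, together with its slash sets, satisfies all three clauses of the extension definition. The first two clauses are immediate from the choice of $\mu$; the delicate point is clause 3), the ``depends on'' equivalence, which we get from the fact that dependence in $P$ is inherited verbatim from $T$ (the slash sets on $\mu((Qv/V))$ in $T$ can only be \emph{supersets} of those on $(Qv/V)$ in $P$, but the newly added variables in $T$ are outside $P$, so they do not interfere with the dependencies among quantifiers of $P$). Once this is in place, the rest is a one-line citation-and-combination argument in the spirit of Theorem \ref{HENKINCOMPLETE}.
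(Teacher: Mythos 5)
Your proposal is correct and follows essentially the same route as the paper: the paper's proof is precisely "apply Lemma \ref{EXTENDLEMMA} and Proposition \ref{FAGIN}, using the fact that regular signalling prefixes encode an NP-complete problem (EXACT COVER BY 3-SETS, \cite{Sev2014}, Theorem 23)". The only difference is that you spell out the verification that $T$ extends the signalling prefix extracted from the witnessing path, a step the paper leaves implicit (as it also does in Theorem \ref{HENKINCOMPLETE}).
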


\begin{proof}
Use Lemma \ref{EXTENDLEMMA} and Prop. \ref{FAGIN} again, on the basis that regular signalling quantifier prefixes codify the NP-complete problem EXACT COVER BY 3-SETS (\cite{Sev2014}, Theorem 23).\footnote{In \cite{BarHelRon2017} it is shown that also SAT and DOMINATING SET can be defined using the smallest signalling prefix.}
\end{proof}

The remaining sections are devoted to a systematical study of the classes of generalized Henkin and coordinated trees.

\section{Complexity of generalized Henkin trees} \label{GHTREES}

The minimal examples of generalized Henkin trees are of the following forms:

\vspace{5pt}

\Tree [.$\forall x$ [.$\circ$   [.$\exists u$ [.$[\phantom a]$ ] ] [.$\forall y$ [.$(\exists v/x)$ [.$[\phantom a]$   ]  ] ]  ] ]
\Tree [.$\forall x$ [.$\forall y$ [.$\circ$  [.$(\exists u/y)$ [.$[\phantom a]$   ]  ]   [.$(\exists v/x)$ [.$[\phantom a]$ ] ]  ] ] ]

\vspace{5pt}

\noindent where $\circ$ is either $\lor$ or $\land$. We will call GH1($\circ$) the first type, and GH2($\circ$) the second.

\subsection{Describing SAT by a minimal generalized Henkin sentence} \label{GH2OR}
Here we express the NP-complete problem SAT by means of an $IF$ sentence whose (positive initial) tree is generalized Henkin (specifically, GH2($\lor$)), but not Henkin nor signaling nor coordinated.\\
\\
SAT Problem: Given a proposition in conjunctive normal form,
 decide whether there is an assignment which satisfies the proposition.\\
\\
How we model the problem: each instance of it is a structure of signature $P^2,N^2,C^1,0,1$ ($0,1$ are constants denoting two distinct elements; $C(y)$: ``$y$ is a clause''; $\neg C(y)$: ``$y$ is a propositional letter''; $P(x,y)$ : ``$x$ occurs positively in $y$'';  $N(x,y)$ : ``$x$ occurs negatively in $y$''); we only allow structures such that for each clause $y$ there are at least two propositional letters $x$ such that $P(x,y)\lor N(x,y)$. It is well known that, even with this restriction, the SAT problem stays NP-complete.

For brevity, we shall write $O(x,y)$, ``$x$ occurs in $y$'', as a shortening for $\neg C(x) \land C(y) \land (P(x,y) \lor N(x,y))$.

The defining sentence is:
\[
\varphi: \forall x\forall y((\exists u/y)\psi_1 \lor (\exists v/x)\psi_2)
\]
where
\[
\psi_1: O(x,y) \land (P(x,y) \rightarrow u = 1) \land (N(x,y) \rightarrow u = 0)
\]
and
\[
\psi_2:  O(v,y) \land(O(x,y)\rightarrow x\neq v).
\]
\begin{teo}
If $M$ is a suitable structure, then $M \models \varphi$ iff $M$ is a ``yes'' instance of SAT.
\end{teo}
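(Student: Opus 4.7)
The plan is to unpack the team semantics of $\varphi$ into a concrete combinatorial statement and then prove the two directions by explicit construction. Starting from the team $\{\emptyset\}$, after the $\forall x\forall y$ we obtain the universal team $X=\{(a,b):a,b\in M\}$. Verifying the disjunction amounts to choosing: (i) a $\{y\}$-uniform function $F_u:X\to M$, which by uniformity reduces to a function $\phi:M\to M$ with $F_u(a,b)=\phi(a)$ (so $\phi$ acts as a \emph{truth assignment} to propositional letters); (ii) a $\{x\}$-uniform function $F_v:X\to M$, which similarly reduces to $\gamma:M\to M$ with $F_v(a,b)=\gamma(b)$ (so $\gamma$ acts as a \emph{choice function} picking, for each clause $b$, a candidate witness literal $\gamma(b)$); and (iii) a splitting $X=Y\cup Z$ with $\psi_1$ satisfied on $Y$ under $\phi$ and $\psi_2$ satisfied on $Z$ under $\gamma$. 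So the task reduces to proving this combinatorial reformulation equivalent to satisfiability of the 2-literal CNF encoded by $M$.

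For the forward direction, assume $M$ is a \textbf{yes} instance. Fix a satisfying assignment $f$ and, for each clause $b$, pick a literal $g(b)$ occurring in $b$ and satisfied by $f$ (i.e., $P(g(b),b)$ with $f(g(b))=1$, or $N(g(b),b)$ with $f(g(b))=0$). Set $\phi:=f$ (extended arbitrarily outside propositional letters), $\gamma:=g$, and
\begin{align*}
Y&=\{(a,b)\in X:O(a,b),\ P(a,b)\Rightarrow f(a)=1,\ N(a,b)\Rightarrow f(a)=0\},\\
Z&=\{(a,b)\in X:\neg O(a,b)\ \text{or}\ a\neq g(b)\}.
\end{align*}
By construction, $Y$ witnesses $\psi_1$ under $\phi$, and $Z$ witnesses $\psi_2$ under $\gamma$ (since $O(g(b),b)$ holds by the choice of $g$). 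The only nontrivial point is $Y\cup Z=X$: if $(a,b)\notin Y$ then either $\neg O(a,b)$ (hence $(a,b)\in Z$), or the polarity of $a$ in $b$ contradicts $f(a)$; but then $a$ is not satisfied by $f$ in $b$, whereas $g(b)$ is, so $a\neq g(b)$ and again $(a,b)\in Z$.

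For the backward direction, assume $M\models\varphi$, and extract $\phi$, $\gamma$, $Y$, $Z$ as above. Fix any clause $b$ and look at the single assignment $(\gamma(b),b)$. If it were in $Z$, then $\psi_2$ (with $v=\gamma(b)$) would assert both $O(\gamma(b),b)$ and $O(\gamma(b),b)\Rightarrow \gamma(b)\neq\gamma(b)$, a contradiction. So $(\gamma(b),b)\in Y$, and $\psi_1$ forces $O(\gamma(b),b)$ together with the polarity conditions $P(\gamma(b),b)\Rightarrow\phi(\gamma(b))=1$ and $N(\gamma(b),b)\Rightarrow\phi(\gamma(b))=0$. Defining $f(a):=\phi(a)$ on propositional letters then yields a truth assignment under which $\gamma(b)$ is a satisfied literal of $b$; varying $b$ shows $f$ satisfies every clause.

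The main subtlety to watch out for is the crucial asymmetry between the two existentials: $u$ is $\{y\}$-independent so $\phi$ cannot consult the clause, while $v$ is $\{x\}$-independent so $\gamma$ cannot consult the candidate literal. This is exactly what forces $\phi$ to act globally as a propositional assignment and $\gamma$ to act as a witness selector, and it is what makes the ``diagonal'' pair $(\gamma(b),b)$ the decisive witness in the backward direction. The combinatorics of the team split is otherwise routine; the only care needed is in checking $Y\cup Z=X$ in the forward direction, which is where the hypothesis that $g(b)$ be a genuinely satisfied literal is used.
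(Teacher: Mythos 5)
Your proof is correct and takes essentially the same route as the paper's: the same unpacking of the two uniform functions into a letter-indexed truth assignment (the $u$-choice, being $\{y\}$-uniform, depends only on $x$) and a clause-indexed witness selector (the $v$-choice depends only on $y$), the same forward-direction split sending the chosen satisfied literal-clause pairs to the left disjunct, and the same key observation in the converse direction that the pair $(\gamma(b),b)$ is forced into the $\psi_1$-part because $\psi_2$ would demand $\gamma(b)\neq\gamma(b)$. The only cosmetic difference is that you argue the converse directly via this diagonal pair, whereas the paper argues contrapositively from an unsatisfied clause $\hat y$; the underlying mechanism, $\{x\}$-uniformity of the $v$-choice forcing a collision, is identical.
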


The idea behind this description is similar to that of Jarmo Kontinen's Theorem 4.3.3 from his PhD thesis (although, he deals with a very different kind of descriptive complexity; and although his method seems to capture just the 2-SAT problem). See \cite{Kon2010} or \cite{Kon2013} for a comparison. Think of $x$ as a propositional letter, $u$ as the truth value which is assigned to $x$, $y$ as a clause, $v$ as a propositional letter which corresponds to a literal of $y$ which is made true by the truth assignment. The left disjunct enforces $u$ to be a truth assignment; the $y$-uniformity of the function which picks $u$ guarantees that the assignment is correctly defined, i.e., a function of the propositional letters. The right disjunct ensures that, for every clause $\hat y$, there is at least one literal in it (corresponding to a prop.letter $\hat x$) which is made true by the assignment described by $u$; this is so because the formula $x\neq v$ enforces that at least one  pair of values $(\hat x, \hat y)$ for $(x,y)$  is sent to the left disjunct, which ensures that $\hat x$ is in the domain of the assignment. 

For the sake of the present proof, it will be convenient to adopt the following notation: if $X$ is a team and $v_1,\dots v_n$ a sequence of variables in the domain of $X$, we denote as $X(v_1,\dots,v_n)$ the relation $\{(s(v_1),\dots, s(v_n)) \ | \ s\in X\}$.

\begin{proof}
1) Suppose $M$ is a ``yes'' instance. Then there is a truth assignment $T$ on propositional letters which makes the proposition $\bigwedge \{c\in M| c\in C^M\}$ true. This means that to each clause $c$ we can associate a propositional letter $f(c)$ which either occurs positively in $c$ and $T(f(c)) = 1$, or it occurs negated in $c$ and $T(f(c)) = 0$. Let $R$ be $\{(f(c),c)|c\in M\}$, and $S = M^2 \setminus R$. Let $Y = \{s:\{x,y\}\rightarrow M \ | \ (s(x),s(y)) \in R \}$ and $Z = \{s:\{x,y\}\rightarrow M \ | \ (s(x),s(y)) \in S \}$ be the corresponding teams of domain $\{x,y\}$. They form a partition of $\{\emptyset\}[M/x,M/y]$. Let $Y':= Y[T/u]$; clearly $M,Y'\models \psi_1$. Let $g$ be any extension of $f$ to the whole $dom(M)$. Define $Z' = Z[g/v]$. Any triple $(\hat x,\hat y,\hat v)\in Z'(x,y,v)$ either is such that $\hat x$ does not occur in the clause $\hat y$, or, if it does, $\hat x$ is not $f(\hat y)$ (because the pair $(f(\hat y),\hat y)$ is not in $Z(x,y)$). So, $Z'$ satisfies $\psi_2$.

2) Suppose $M$ is a ``no'' instance. Let $Y,Z$ be any partition of $\{\emptyset\}[M/x,M/y]$; let $T$ be a $y$-uniform function $Y\rightarrow M$; let $g$ be an $x$-uniform function $Z\rightarrow M$. Define $Y', Z'$ from $X,Y,T,g$ as was done above. Since $T$ cannot be a satisfying assignment, there must be a clause $\hat y$ such that, for each propositional letter $x$, the triple $(x,\hat y,u)$ 
falsifies either $P(x,y) \rightarrow u = 1$ or $N(x,y) \rightarrow u = 0$ or $O(x,y)$. So, if $M,Y'\models \psi_1$, then for every $x\notin C^M$, $(x,\hat y,u)\notin Y'(x,y,u)$; so, $(x,\hat y)\notin Y(x,y)$; so, $(x,\hat y)\in Z(x,y)$. But then, if $(x,\hat y,v)\in Z'(x,y,v)$, by $x$-uniformity of $g$, we have that $(x',\hat y,v)\in Z'(x,y,v)$ for every propositional letter $x'$. So, $v$ must be equal to some such $x'$. Thus, $M, Z'\not\models O(x,y)\rightarrow x\neq v$: contradiction.
\end{proof}

\begin{cor}
The minimal generalized Henkin tree GH2($\lor$)

\vspace{5pt}

\Tree [.$\forall x$ [.$\forall y$ [.$\lor$  [.$(\exists u/y)$ [.$[\phantom a]$   ]  ]   [.$(\exists v/x)$ [.$[\phantom a]$ ] ]  ] ] ]

\vspace{5pt}

\noindent and any regular tree prefix extending it are NP-complete.
\end{cor}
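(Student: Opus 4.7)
The plan is to read off the corollary directly from the preceding theorem together with the extension lemma, so the work is just a matter of bookkeeping. First I would observe that the sentence $\varphi = \forall x\forall y((\exists u/y)\psi_1 \lor (\exists v/x)\psi_2)$ constructed in the previous theorem has, as its positive initial tree prefix, precisely the minimal generalized Henkin tree $T_0$ displayed in the corollary (call its left path $P_1$ and its right path $P_2$). Define the completing function $e$ for $T_0$ by $e(P_1)=\psi_1$ and $e(P_2)=\psi_2$. Since $\psi_1$ and $\psi_2$ are quantifier-free, $e$ is weak (and hence regularity-preserving). One checks sententiality: $FV(\psi_1)\subseteq\{x,y,u\}=Bound(P_1)$ and $FV(\psi_2)\subseteq\{x,y,v\}=Bound(P_2)$. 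Thus $\hat e(T_0)=\varphi$.

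Next, by the previous theorem, the model-checking problem for $\hat e(T_0)$ is exactly the SAT problem on suitable structures, which is NP-hard under polynomial reductions. So $T_0$ encodes an NP-hard problem in the sense of the definition of \emph{K}-hardness. Combined with Proposition \ref{FAGIN}, which ensures that every IF sentence (and in particular every completion of $T_0$) defines a problem in NP, we conclude that $T_0$ is NP-complete.

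For the second half of the corollary, let $U$ be any positive initial tree extending $T_0$. By Lemma \ref{EXTENDLEMMA}, extensions of NP-hard regular positive initial trees are NP-hard; applied to $T_0$ and the completing function $e$ above, it produces a weak, sentential completing function $f$ for $U$ and a logspace reduction from the SAT model-checking problem to the model-checking problem for $\hat f(U)$. Hence $U$ is also NP-hard, and again Proposition \ref{FAGIN} places it in NP, yielding NP-completeness.

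There is no serious obstacle here: the previous theorem did the genuine combinatorial work (encoding SAT), and Lemma \ref{EXTENDLEMMA} provides the general transfer principle. The only mildly delicate point is verifying that the completing function associated with $\varphi$ really is weak and sentential, so that it qualifies as a witness in the definition of NP-hardness of a tree; this is immediate from inspection of $\psi_1$ and $\psi_2$.
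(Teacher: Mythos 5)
Your proof is correct and follows exactly the route the paper intends: the preceding theorem exhibits a weak, sentential completion of the GH2($\lor$) tree defining SAT, so the tree is NP-hard, and Lemma \ref{EXTENDLEMMA} together with Proposition \ref{FAGIN} transfers NP-completeness to all (regular) positive initial extensions, precisely as in the proofs of Theorems \ref{HENKINCOMPLETE} and \ref{SIGNALLINGCOMPLETE}. The explicit verification that the completing function is weak and sentential is a welcome addition the paper leaves implicit.
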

It is perhaps of some interest that the SAT-describing sentence above can be rewritten as an H$_2^1$ Henkin prefix sentence
\[
\left(\begin{array}{cc}
\forall x & \exists u \\
\forall y & \exists v
\end{array}\right)
(\psi_1 \lor \psi_2).
\]

\noindent The paper \cite{KryVaa1989} introduced the so-called function quantifiers; in particular, the quantifier F$_2^1$, whose semantics is given by: $M\models$F$_2^1xyzw\psi(x,y,z,w)$ if and only if $\exists f\forall x \forall z\psi(x,f(x),z,f(z))$. It is unknown whether  F$_2^1$ is strictly less expressive than H$_2^1$.  Our SAT-defining sentence is an example of an H$_2^1$ sentence which cannot be reduced in any obvious way to a F$_2^1$ sentence, since the variables $u$ and $v$ describe here two very different functions. We are not aware of other examples of this kind in the literature.

\subsection{Disjunction-free Generalized Henkin trees} \label{GHAND}

\begin{conv}
From this section onwards, it will be useful to follow a convention: talking of a tree prefix $T$ which is a minimal representative of some class K of trees, we  will say that another tree $U$ is an extension$^*$ of $T$ if it is an extension of $T$ and furthermore it does not fall in any of the significant classes that are listed in the table at the end of the paper (signalling, Henkin, GH1($\land$), GH1($\lor$), GH2($\land$), GH2($\lor$), GH3, C1, C2, C1', modest) - except class K itself. So for example an extension$^*$ of the smallest signalling prefix $\forall x\exists y(\exists z/x)$ is any extension of it which is not Henkin, GH1($\land$),...nor  modest.   
\end{conv}

The minimal trees GH1($\circ$) and GH2($\land$) can be easily shown to be first-order (use quantifier distribution for GH1($\land$) and GH2($\land$); use the strong extraction rule in the longest branch of GH1($\lor$)). This tells us nothing about their extensions. We might conjecture that, if a tree falls in one of these classes but not in any other class that we have isolated (i.e., it is an extension$^*$ of GH1($\circ$) or GH2($\land$)), then it is in FO. We fall short of proving such results in full generality; for example, in the case of extensions of  GH1($\land$)  we prove this to hold only under the additional assumption that the tree in question does not contain disjunctions. In an earlier draft we claimed that the same suffices for requiring a GH2($\land$) to be in FO;  instead a stronger assumption is required. 
In order to express this condition, we follow \cite{JaaBar2019} and give a name to a particular extension of GH2($\land$), that we shall refer to as GH3:

\vspace{5pt}

\Tree [.$\forall x$ [.$\forall y$ [.$\exists i$ [.$\land$  [.$(\exists u/y,i)$ [.$[\phantom a]$   ]  ]   [.$(\exists v/x,i)$ [.$[\phantom a]$ ] ]  ] ] ] ]

\vspace{5pt}

\noindent In it, the extra quantifier $\exists i$ does not produce any Henkin or signalling patterns; and it blocks the possibility of distributing the universal quantifiers below the conjunction. In \cite{JaaBar2019} it is shown that the GH3 tree suffices to describe NP-complete problems (3-COLORING and SAT) and that a family of its extensions$^*$ captures ESO. Those results show that ``signalling by disjunction'' does not exhaust the sources of second-order expressive power of regular, non-prenex $IF$ logic. 



\begin{teo} \label{GHANDTEO}
1) If a regular tree prefix $T$ 
 does not contain disjunctions, and is not GH2($\land$), Henkin, nor signalling, then it has first-order complexity.                \\
2) If a regular tree prefix  $T$ 
 does not contain disjunctions, and is not in GH1($\land$), GH3, Henkin, nor signalling, then it has first-order complexity.               
\end{teo}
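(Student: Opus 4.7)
The plan for both parts is to show, by a combination of tree manipulations and direct semantic observation, that every weak sentential completion of $T$ yields a first-order sentence; this gives $C(T)\subseteq \text{FO}$. The key observation is that, since $T$ is non-Henkin and non-signalling, every path of $T$ (from the root to a gap) is a primary quantifier prefix. By Proposition \ref{SEVCLASSIFY}, each such prefix, when followed by a quantifier-free formula, yields an FO sentence (or, treating variables bound above as parameters, an FO open formula). The difficulty is in combining the different paths: the tree $T$ shares outer quantifiers and conjunctions across paths, and one must argue that this combination does not inflate complexity beyond FO.

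The workhorse for the combination is distribution $\forall u(S_1\land S_2)\equiv \forall u S_1\land \forall u S_2$ (Theorem \ref{DISTRIBUTETREE}, case 1, which preserves complexity), supplemented by renaming (Theorem \ref{VARIANTTREE}), quantifier swapping (Theorem \ref{SWAPTREE}), and slash cleanup (Theorem \ref{DEPEX}). I induct on the number of conjunctions of $T$. The base case has $T$ a quantifier prefix; by hypothesis it is primary, hence FO by Proposition \ref{SEVCLASSIFY}. For the inductive step, pick a conjunction $\land$ of maximal depth and let $S_L, S_R$ be the quantifier blocks immediately below. Push each $\forall x$ above $\land$ past $\land$: if $\forall x$ is immediately above $\land$, distribute at once. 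Otherwise an existential $\exists a$ sits between, and either $a$ is independent of $x$ (swap via Theorem \ref{SWAPTREE}), or $a$ depends on $x$. In the latter case the non-signalling hypothesis, applied to the triple $\forall x, \exists a, (\exists v/V)$ (where $(\exists v/V)$ is the generalized-Henkin existential below $\land$, which by definition satisfies $x\in V$) forces $a\in V$; iterating this over all the relevant existentials in $S_R$ shows that the hypotheses for the equivalence form of strong extraction (Proposition \ref{STRONGEXTRACTION}) hold, moving $\exists a$ into one subtree only and bringing $\forall x$ one step closer to $\land$.

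Iterating the procedure one arrives at a tree $T' = T'_L \land T'_R$ with $C(T) = C(T')$, where each $T'_L, T'_R$ has strictly fewer conjunctions and still inherits the non-Henkin and non-signalling properties; the inductive hypothesis then gives $T'_L, T'_R \in \text{FO}$, and hence $T \in \text{FO}$. Part 2 (GH2($\land$)) follows the same template, with both $\forall x$ and $\forall y$ distributed past $\land$ in turn; the extra assumption that $T$ is not already in GH1($\land$) is used to guarantee that pushing one of the universals below $\land$ does not leave behind a GH1($\land$) pattern whose treatment would re-enter Part 1 in an uncontrolled way. The main obstacle is justifying the equivalence form of strong extraction at the tree level, since Theorem \ref{STRONGEXTRACTTREE} states only one direction of weak reducibility; one must verify by a direct semantic argument (relying on the non-signalling hypothesis to ensure $\psi|_a = \psi$ for every existential $\exists a$ that needs to be moved) that the two trees have the same complexity class in this restricted regime.
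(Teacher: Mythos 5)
There is a genuine gap, and it sits exactly where you flagged it: the ``equivalence form of strong extraction'' you need does not exist at the tree level, and no direct semantic argument can supply it. Moving $\exists a$ from above $\land$ into a single conjunct removes $a$ from $Bound(P)$ for every path $P$ through the other conjunct, so every weak sentential completion that mentions $a$ \emph{freely} in one of that conjunct's gaps is lost. This obstruction concerns free occurrences of $a$ in the quantifier-free completions, not slash sets, so your observation that non-signalling forces $\psi|_a=\psi$ does not touch it. This is precisely why the paper states its extraction rules (Theorems \ref{EXTRACTTREE} and \ref{STRONGEXTRACTTREE}) only as one-way weak reductions, and why Section \ref{EQUIVTREESSEC2} warns, via the pair $\forall x(\exists y[\phantom{a}]\lor[\phantom{a}])$ versus $\forall x\exists y([\phantom{a}]\lor[\phantom{a}])$, that repositioning a quantifier across a connective changes which completions are sentential and hence the complexity class of the tree. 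Concretely, by Theorems \ref{STRONGEXTRACTTREE} and \ref{REDUCTIONTOEXPRESSIVITY} your push-down step yields only $C(T'')\subseteq C(T)$, which is the wrong inclusion for an FO upper bound on $C(T)$. A related warning about your opening observation: the fact that every path of $T$ is a primary prefix carries no weight by itself, since the paper's GH2($\lor$), C1 and C2 trees have only primary paths and are nonetheless NP-complete; everything rests on the combination step, which is exactly where your argument breaks. (Note also that your target decomposition $T'_L\land T'_R$ would require distributing \emph{existentials} over $\land$, which Theorem \ref{DISTRIBUTETREE} does not license.)

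The frustrating part is that the gap is avoidable: you stopped your case analysis halfway. You used non-signalling to conclude $a\in V$ for a blocking existential $\exists a$ (above $\land$, depending on $x$); but then $\forall x,\exists a,\forall y,(\exists v/V)$ form a Henkin pattern lying on a single path: $a$ depends on $x$ and, being superordinated to $\forall y$ and $(\exists v/V)$, depends on neither of them, while $v$ depends on $y$ but neither on $x$ nor on $a$, since $x,a\in V$. This contradicts the non-Henkin hypothesis, so the case in which you invoke reverse extraction never arises: every existential between $\forall x$ and the conjunction is independent of $x$ and can be passed by plain quantifier swapping (Theorem \ref{SWAPTREE}); universals are passed likewise, and conjunctions by distribution (Theorem \ref{DISTRIBUTETREE}, case 1, which preserves complexity), the disjunction-freeness hypothesis guaranteeing that no blocking $\lor$ is ever met. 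This is, in essence, the paper's own proof: it pushes the pattern's $\forall x$ down below the pattern's $\land$, destroying one GH1($\land$) witness at a time without creating new patterns or disjunctions, and, instead of your induction on conjunctions with base case Proposition \ref{SEVCLASSIFY}, it iterates until the tree is \emph{modest} and concludes by Theorem \ref{MODESTTREE}. If you complete the case analysis as above and replace your extraction step by swapping, your argument collapses into the paper's.
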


\begin{proof}
1) Suppose $T$ satisfies the hypotheses; then,  it is either modest or GH1($\land$). In the former case, it is in FO by theorem \ref{MODESTTREE}. In the latter, it contains at least one pattern 

\vspace{5pt}

\Tree [.$\vdots$\\$\forall x$\\$\vdots$\\$\land$  [.$\vdots$\\$(\exists u/U)$\\$\vdots$ ] [.$\vdots$\\$\forall y$\\$\vdots$\\$(\exists v/V)$\\$\vdots$  ]  ]

\vspace{5pt}

\noindent where $x\notin U, y\notin V,x\in V$, witnessing that $T$ is GH1($\land$). Notice:\\
\\
1. There are, by assumption, no disjunctions between $\forall x$ and $(\exists v/V)$. \\
2. Every existential quantifier $(\exists w/W)$ between $\forall x$ and $\forall y$ is independent of $\forall x$ (otherwise either $\forall x,\exists w,\forall y, \exists v$ form a Henkin pattern, or $\forall x,\exists w, \exists v$ form a signalling pattern). Consequently, $\forall x$ can be pushed below any such quantifier by the quantifier swapping rule.\\
3.  $\forall x$ can be pushed below any other universal quantifier by quantifier swapping. \\
4. $\forall x$ can be pushed below any conjunction by means of quantifier distribution.\\
\\
None of these tranformations generates new dependence patterns
, nor disjunction symbols; so, applying them preserves the hypotheses of the theorem. Using these transformations, one can push $\forall x$ below 
$\land$, so that there is one less witness of the GH1($\land$) pattern.

Iterating the process, one can remove all witnesses of the GH1($\land$) pattern, until the resulting tree is modest (and thus of first-order complexity, Theorem \ref{MODESTTREE}).

2) Analogous. Point 2. by itself may be insufficient for the purpose of pushing $\forall x$ below the conjunction $\land$, since it does not exclude that there might be existential quantifiers depending on $\forall x$ and $\forall y$ between $\forall y$ and $\land$; it that case, it would be impossible to push $\forall x$ and/or $\forall y$ below $\land$. But this configuration does not arise, because of the assumption that $T$ is not GH3.
\end{proof}

This theorem adds to theorem \ref{MODESTTREE} in that allows recognizing some non-modest trees/sentences as FO; for example, $\forall z\forall x((\exists u/z)\gap \lor \forall y(\exists v/x)\gap)$ is a GH1($\land$) prefix which satisfies part 1) of the theorem, while $\forall z\forall x\forall y((\exists u/zy)\gap\land(\exists v/zx)\gap)$ is a GH2($\land$) prefix which satisfies part 2). As far as we could see, it seems impossible to tell that sentences with these prefixes are first-order just by taking prenex forms and checking that the tree is primary; the prenex transformations seem to always generate Henkin patterns, for the first tree, and signalling patterns for the second.

In the contrapositive, the theorem above tells us that the search for genuinely new second-order tree prefixes without disjunctions can be restricted to trees that are \emph{both} GH1($\land$) and GH2($\land$):

\begin{cor}
Let $T$ be a regular tree prefix without occurrences of disjunction and which is not signalling, Henkin nor GH3. If $T$ is not in $FO$, then $T$ is both GH1($\land$) and GH2($\land$). 
\end{cor}

\subsection{Conjunction-free GH1 trees} \label{GH1OR}

For GH1($\lor$), we conjecture that a dual result may hold: extensions$^*$ of GH1($\lor$) which do not contain \emph{conjunction} symbols are in FO.

\begin{conjecture} \label{GH1ORTEO} 
Suppose an IF regular tree prefix has no conjunctions, and it is not GH2($\lor$), coordinated, Henkin nor signalling. Then it is in the FO complexity class.
\end{conjecture}

\noindent However, we have no fully convincing proof of this statement.




\section{Coordinated trees of the first kind} \label{CTREES}

The minimal examples of coordinated trees of the first kind can have the following forms:

\vspace{5pt}

\Tree [.$\forall x$ [.$\lor$ [.$\forall y$ [.$(\exists u/x)$ [.$[\phantom a]$   ]  ] ]  [.$\forall z$ [.$(\exists v/x)$ [.$[\phantom a]$ ] ] ] ] ]
\Tree [.$\forall x$ [.$\forall y$ [.$\lor$  [.$(\exists u/x)$ [.$[\phantom a]$   ]  ]   [.$\forall z$ [.$(\exists v/x,y)$ [.$[\phantom a]$ ] ] ] ] ] ]
\Tree [.$\forall x$ [.$\forall y$ [.$\forall z$ [.$\lor$  [.$(\exists u/x,z)$ [.$[\phantom a]$   ]  ]    [.$(\exists v/x,y)$ [.$[\phantom a]$ ] ] ] ] ] ]

\vspace{5pt}

\noindent We call these trees (and the corresponding fragments of $IF$ logic) C1, C2, and C3, from left to right. It is apparent that C(C1)$\subseteq$ C(C2) $\subseteq$ C(C3). In the subsections we show that all extensions of these trees are NP-complete.

\subsection{SAT by coordinated trees}\label{CSAT}

First: observe that the coordinated tree C3 is an extension of the generalized Henkin tree GH2($\lor$).
So, by the Extension Lemma, it permits defining the SAT problem. So, all trees extending C3 are NP-complete (and we can exclude them from our classification, since they are a special case of GH2($\lor$) trees).\\
\\
Secondly: we give a different (but similar in spirit) description of SAT by means of the coordinated tree C2. This will prove that all trees extending C2 are NP-complete. We use the same notations and conventions as in the previous section, with the following exception\footnote{This difference is just related to our choice of names for the quantified variables; it has no deeper meaning.}: now $P(x,y)$ is interpreted as ``$y$ is a prop. letter occurring positively in the clause $x$'', and not viceversa; similarly for the relations $N$ and $O$. $O(x,y)$ is an abbreviation for $C(x)\land \neg C(y) \land (P(x,y)\lor N(x,y))$. We assume that each clause contains at least one literal. Then, the  SAT-defining sentence is:
\[
\theta: \forall x\forall y((\exists u/x)\chi_1 \lor\forall z(\exists v/xy) \chi_2)
\]
where
\[
\chi_1 : 
O(x,y)\land [ (P(x,y) \rightarrow u=1) \land (N(x,y) \rightarrow u=0)]
\]
and
\[
\chi_2 : (z=x \land O(x,y)) \rightarrow (v\neq y \land O(x,v)).
\]

\begin{teo}\label{TEOC2}
If $M$ is a suitable structure, then $M\models\theta$ iff $M$ encodes a ``yes'' instance of SAT. 
\end{teo}

\begin{proof}
1) Suppose $M$ encodes a ``yes'' instance of SAT. Then there is an assignment $T$ of truth values to the propositional variables which makes the conjunction of clauses true. This means that to each clause $a$ we can associate a proposition $b = g(a)$ (we functionally choose one) such that either $b$ occurs positively in $a$, and $T(b)=1$, or $b$ occurs negatively in $a$, and $T(b)=0$. We say that such pairs are in a relation $R(a,b)$. Now define $Y,Z\subseteq \{\emptyset\}[MM/xy]$ as: $Y:=\{\{(x,a),(y,b)\} \ | \ (a,b)\in R\}$, $Z:= \{\emptyset\}[MM/xy]\setminus Y$. Define the function $F:Y\rightarrow M$, by $F(s):= T(s(y))$ whenever $s(y)$ is a propositional letter, and arbitrarily otherwise. Then, by the comments on $T$ above, $M,Y[F/u]\models \chi_1$. 

 Define the function $G: Z[M/z]\rightarrow M$ as $G(s) := g(s(z))$. Then, if $s\in Z[MG/zv]$ and $s(z)=s(x)$, we have $s(v)=g(s(z))=g(s(x))$; therefore $s(y)\neq s(v)$, because $s(v)= g(s(x))$, and the pair $((x,s(x)),(v,g(s(x))))$ is in $Y$, so not in $Z$. Furthermore,
 by the definition of $G$, $(s(x),s(v)) = (s(x),g(s(x))\in R$, which implies $(s(x),s(v))\in O^M$. So $M, Z[MG/zv]\models \chi_2$.

2) Suppose $M\models\theta$. Then there are $Y,Z\subseteq \{\emptyset\}[MM/xy]$, a $x$-uniform function $F: Y\rightarrow M$ and a $xy$-uniform $G: Z[M/z]\rightarrow M$ such that $Y\cup Z=\{\emptyset\}[MM/xy]$, $M,Y[F/u]\models\chi_1$ and $M,Z[MG/zv]\models\chi_2$. Suppose for sake of contradiction that, for some clause $\hat a\in C^M$, $s\in Y$ implies $s(x)\neq \hat a$. By our assumption on structures, that in each clause at least one literal occurs, there must be an $s\in Z$ such that $b:=s(y)$ occurs in $\hat a$. Pick $s'\in Z[MG/zv]$ such that $s'(x) = \hat a$, $s'(y) = b$ and $s'(z) = s'(x)$. Since $M,Z[MG/zv]\models\chi_2$, we have $s'(v)\neq s(y)$ and $(s'(v),s(x))\in O^M$. By $xy$-uniformity of $G$, $s'(v)= G(s'_{\upharpoonright \{x,y,z\}})$ is different from all  $s''(y)$ such that $s''\in Z$. Then, the assignment $\{(x,\hat a),(y,s'(v))\}$ must be in $Y$, contradicting our hypothesis.

So, for each clause $a$, there is a propositional letter $g(a)$ occurring in $a$ such that $s_a:=\{(x,a),(y,g(a))\}\in Y$. Define $T(g(a)):= F(s_a)$, and extend it arbitrarily to a propositional assignment over propositional letters that are not of the form $g(a)$. Since $M,Y[MF/yu]\models \chi_1$ and $Y$ contains $s_a$ for each clause $a$, $T$ is an assignment that satisfies the instance of SAT which is encoded by $M$.
\end{proof}

In this proof we used the restriction that each clause contain at least one literal; eliminating such restriction on the class of structures would require the usage of an extra existential quantifier (independent of $x$) in each disjunct; the resulting tree would be an extension of the one considered, and (because of the right disjunct) either signalling or a Henkin tree. In any case, as before, also this variant of SAT is NP-complete. So:

\begin{cor}
The coordinated tree C2 (and any tree extending it) is NP-complete. 
\end{cor}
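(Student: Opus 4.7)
My plan is to package the preceding theorem together with Proposition \ref{FAGIN} and Lemma \ref{EXTENDLEMMA}; almost all the work has already been done.

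First I would verify that the SAT-describing sentence $\theta$ is genuinely of the form $\hat e(\mathrm{C2})$ for some weak, sentential completing function $e$. The two paths of C2 terminate at the gaps below $(\exists u/x)$ and $(\exists v/xy)$; the left path has $\mathrm{Bound} = \{x,y,u\}$ and the right path has $\mathrm{Bound} = \{x,y,z,v\}$. Setting $e$ to send the two paths to $\chi_1$ and $\chi_2$ respectively, I would note that both formulas are quantifier-free (so $e$ is weak, and automatically regularity-preserving) and that $FV(\chi_1)\subseteq\{x,y,u\}$ and $FV(\chi_2)\subseteq\{x,y,z,v\}$ (so $e$ is sentential in the sense of Definition \ref{DEFCF}). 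Consequently $\theta = \hat e(\mathrm{C2})$ and $F_\theta \in \mathrm{C}(\mathrm{C2})$.

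The preceding theorem shows that the model-checking problem for $\theta$ coincides with SAT on the restricted class of structures encoding 2-CNF instances; as the remark immediately above the corollary observes, on arbitrary finite structures $F_\theta$ is the NP-complete problem ``$M$ encodes a satisfiable CNF or $M$ is not a valid SAT encoding.'' Either way, $\mathrm{C}(\mathrm{C2})$ meets the class of NP-hard problems, so C2 is NP-hard by the definition given just before Proposition \ref{OLDDICHOTOMY}. The matching upper bound is automatic from Proposition \ref{FAGIN}: every weak, sentential completion of C2 yields an IF sentence whose model-checking problem lies in NP. Hence $\mathrm{C}(\mathrm{C2})\subseteq\mathrm{NP}$ and C2 is NP-complete.

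For an arbitrary positive initial regular tree $U$ extending C2, I would invoke Lemma \ref{EXTENDLEMMA} verbatim: NP-hardness of C2 lifts to NP-hardness of $U$. Proposition \ref{FAGIN} again supplies the NP upper bound for $U$, so $U$ is NP-complete as well. I do not anticipate a real obstacle; the only points requiring care are the routine checks that $e$ falls under Definition \ref{DEFCF} and that the reduction in the preceding theorem still witnesses NP-hardness once we pass from the restricted SAT encoding to all finite structures, which is exactly the parenthetical observation already made in the text.
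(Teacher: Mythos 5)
Your proposal is correct and follows essentially the same route as the paper's own (largely tacit) argument: the preceding theorem supplies the hard instance via the weak, sentential completion $e$ with $\hat e(\mathrm{C2})=\theta$, the remark just above the corollary handles the passage from the restricted class of SAT-encodings to arbitrary finite structures, Proposition \ref{FAGIN} gives the NP upper bound, and Lemma \ref{EXTENDLEMMA} lifts NP-hardness to any (regular, positive initial) tree extending C2, exactly as in Theorems \ref{HENKINCOMPLETE} and \ref{SIGNALLINGCOMPLETE}. Your explicit checks that $e$ is weak and sentential (with $FV(\chi_1)\subseteq\{x,y,u\}$ and $FV(\chi_2)\subseteq\{x,y,z,v\}$) merely spell out what the paper leaves implicit.
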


\subsection{NP-completeness of C1} \label{C1LHARD}

We do not know whether the SAT problem is definable by means of the coordinated tree C1; however, we show here that a different NP-complete problem, SET SPLITTING (see e.g. \cite{GarJoh1979}), is definable by means of C1. This result was obtained in collaboration with Lauri Hella, who kindly agreed on including it in this paper.\\
\\
Input: a set $A$, a family $\mathcal{B}\subseteq \wp(A)$ s.t., for every $B\in \mathcal{B}$, $card(B)\geq 2$.\\
\\
Measure of the input: $card(A\cup\mathcal{B})$.\\
\\
Problem: Is there a partition $\{U,V\}$ of $A$ such that, for each $B\in \mathcal{B}$, $B\cap U \neq\emptyset$ and  $B\cap V\neq\emptyset$?\\
\\
We encode input instances as structures of domain $A\cup\mathcal{B}$ (with $\mathcal{B}\subseteq \wp(A)$ such that each of its element has at least cardinality 2) which interprets in the obvious way unary predicates $A$ and $\mathcal{B}$, and  a binary ``set membership'' relation $R_\in$ (with the restriction that, if  $(a,B)\in R_\in$, then $a\in A$, $B\in \mathcal{B}$ and $a\in B$).

The requirement that the sets in $\mathcal{B}$ have at least cardinality 2 is our addition to the original problem; it obviously does not decrease its complexity, and it makes the problem easier to define in our fragment of $IF$ logic.

The defining sentence is:
\[
\eta: \forall x(\forall y(\exists u/\{x\})\epsilon_1 \lor \forall z(\exists v/\{x\})\epsilon_2)
\]
where
\[
\epsilon_1: (A(x) \land \mathcal{B}(y)) \rightarrow (u\neq x \land R_\in(u,y))
\] 
and
\[
\epsilon_2: (A(x) \land \mathcal{B}(z)) \rightarrow (v\neq x \land R_\in(v,z))
\] 

\begin{teo} \label{C1ISNP}
For every suitable structure $M$, $M\models\eta$ iff $M$ encodes a ``yes'' instance of SET SPLITTING.
\end{teo}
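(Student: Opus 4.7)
The plan is to mimic the structure of the proof of the SAT--capturing theorem for $C2$, unpacking the team--semantic meaning of $\eta$ and showing that it matches the SET SPLITTING condition. First, I would unfold what $M \models \eta$ means: there exist teams $Y, Z \subseteq \{\emptyset\}[M/x]$ with $Y \cup Z = \{\emptyset\}[M/x]$, an $\{x\}$-uniform function $F : Y[M/y] \to M$ and an $\{x\}$-uniform function $G : Z[M/z]\to M$ such that $M, Y[F/u] \models \epsilon_1$ and $M, Z[G/v] \models \epsilon_2$. The key observation to exploit is that, because $F$ is $\{x\}$-uniform on $Y[M/y]$, its value on an assignment $\{(x,a),(y,b)\}$ depends only on $b$ and on the fact that $a \in \pi_x(Y)$; the analogous statement holds for $G$.

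Set $\hat U := \pi_x(Y)\cap A^M$ and $\hat V := \pi_x(Z)\cap A^M$; then $\hat U \cup \hat V = A^M$. For the reverse direction (hard direction): assume $M\models \eta$. For a fixed $b \in \mathcal{B}^M$, $x$-uniformity produces a single value $u_b := F(\{(x,a),(y,b)\})$ that does \emph{not} depend on $a \in \hat U$. Reading off $\epsilon_1$ at every $(a,b)$ with $a\in\hat U$, $b\in\mathcal{B}^M$, we obtain $u_b \in b$ and $u_b \neq a$ for \emph{every} $a \in \hat U$; hence $u_b \in b \setminus \hat U$, so $b \not\subseteq \hat U$. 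Symmetrically, $b \not\subseteq \hat V$. Since $A^M \setminus \hat U \subseteq \hat V$ and vice versa, both $\hat U$ and $A^M \setminus \hat U$ meet every $b\in\mathcal{B}^M$, yielding the SET SPLITTING partition $\{\hat U,\, A^M\setminus \hat U\}$.

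For the forward direction, suppose $\{U^*,V^*\}$ is a SET SPLITTING partition of $A^M$. Put
\[
Y = \{\{(x,a)\} : a \in V^* \cup (M \setminus A^M)\}, \qquad Z = \{\{(x,a)\} : a \in U^*\},
\]
so that $\pi_x(Y)\cap A^M = V^*$ and $\pi_x(Z)\cap A^M = U^*$. Define $F$ by choosing, for each $b \in \mathcal{B}^M$, some $F(b) \in b\cap U^*$ (nonempty by the splitting property), and setting $F(\{(x,a),(y,b)\}) := F(b)$; assign arbitrary values otherwise. This $F$ is $\{x\}$-uniform by construction, and satisfies $\epsilon_1$: for $a \in V^*$, $b \in \mathcal{B}^M$, we have $F(b) \in b$ and $F(b) \in U^*$, hence $F(b)\neq a$. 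The case $a \notin A^M$ is vacuous. Define $G$ symmetrically, swapping the roles of $U^*$ and $V^*$.

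The main obstacle I anticipate is keeping the quantifier--independence bookkeeping straight, in particular making sure that the $\{x\}$-uniformity of $F$ and $G$ is read as ``the chosen witness does not vary as $x$ ranges over $\pi_x(Y)$ (resp.\ $\pi_x(Z)$)'', and not more restrictively. Once this is nailed down, the only mild subtlety is handling elements of $M\setminus A^M$ and of $M\setminus\mathcal{B}^M$, which is done by vacuous satisfaction of the conditional $\epsilon_1, \epsilon_2$, and observing that the restriction that every $b \in \mathcal{B}^M$ has at least two members is exactly what is needed to guarantee $b \cap U^* \neq \emptyset$ and $b \cap V^* \neq \emptyset$ can be reused from our encoding assumption (indeed, our assumption is even stronger, since it comes from the SET SPLITTING specification).
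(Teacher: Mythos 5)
Your proposal is correct and takes essentially the same route as the paper's proof: both directions unfold the team semantics of $\eta$, the easy direction builds the split teams from the partition with each $\{x\}$-uniform witness chosen in the \emph{opposite} block, and the hard direction reads the partition off $\hat U=\pi_x(Y)\cap A^M$, using $\{x\}$-uniformity to force the single witness $u_b$ into $b\setminus \hat U$ (the paper phrases this with $V:=A^M\setminus U$ and a ``w.l.o.g.\ $U\neq\emptyset$'' step rather than your symmetric treatment of $\hat U,\hat V$). Two small repairs for full rigor: stipulate that the ``arbitrary'' values of $F$ and $G$ are functions of $y$ (resp.\ $z$) alone, as the paper does, so that $\{x\}$-uniformity genuinely holds by construction; and note that when $\hat U$ (or $\hat V$) is empty the value $u_b$ is undefined, but the needed conclusion $b\not\subseteq\hat U$ then holds trivially since $b\neq\emptyset$ --- relatedly, your closing remark misattributes the role of the cardinality-$\geq 2$ restriction, since in the forward direction $b\cap U^*\neq\emptyset$ and $b\cap V^*\neq\emptyset$ come from the splitting hypothesis itself, not from that restriction.
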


\begin{proof}
$\Leftarrow$) Let $M$ be a ``yes'' instance of SET SPLITTING. Let $\{U,V\}$ be a partition of $A$ which satisfies the requirement of the problem: for every $B\in \mathcal{B}$, $B\cap U \neq\emptyset$ and  $B\cap V\neq\emptyset$. For each $B\in \mathcal{B}$, choose a $u_B\in B\cap V$ and a $v_B\in B\cap U$. Define teams $Y:=\{s\in\{\emptyset\}[M/x] \ | \ s(x)\in U\}$ and $Z:=\{\emptyset\}[M/x] \setminus Y$; they form a partition of $\{\emptyset\}[M/x]$. Let $F:Y[M/y]\rightarrow M$ be defined as $F(s) := u_B$ if $s(y) = B\in\mathcal{B}$, and as an arbitrary function of $y$ otherwise. Let $G:Z[M/z]\rightarrow M$ be defined as $G(s) := v_B$ if $s(z) = B\in\mathcal{B}$, and arbitrarily otherwise. Since the $u_B$s are in $V$, they are not in $U$, and so every $s\in Y[MF/yu]$ is such that, if $s(y)\in\mathcal{B}$, then $s(u) = F(s_{\upharpoonright\{x,y\}}) = u_B \neq s(x)\in U$. So,  $M,Y[MF/yu]\models \epsilon_1$. A symmetrical argument shows that $M,Z[MG/zv]\models \epsilon_2$.

$\Rightarrow$) Suppose $M\models \eta$. Then there are $Y,Z\subseteq \{\emptyset\}[M/x]$ such that $Y\cup Z = \{\emptyset\}[M/x]$, and $x$-uniform functions $F:Y[M/y]\rightarrow M$ and $G: Z[M/z]\rightarrow M$, such that $M,Y[MF/yu]\models \epsilon_1$ and $M,Z[MG/zv]\models \epsilon_2$.
Define $U$ as $\{a\in A \ | \ \exists s\in Y(s(x)=a)\}$, and $V:= A\setminus U$. Since $U\cup V=A$, at least one out of $U$ and $V$ is nonempty. We suppose w.l.o.g. that $U$ is nonempty, which implies that $Y$ is nonempty. 

Let $B\in\mathcal{B}$. Let $s_B\in Y[MF/yu]$ be an assignment such that $s_B(y) = B$ and $s_B(x)\in A$ (there is at least one such $s_B$, because of the nonemptyness of $Y$ and the fact that $y$ is universally quantified). The fact that $M,s_B\models \epsilon_1$ implies that $s_B(u)\in s_B(y)=B$ and $s_B(u)\neq s_B(x)$; since $s_B(u) = F(s_{\upharpoonright \{x,y\}})$, the $x$-uniformity of $F$ implies that $s_B(u)\neq s(x)$ for each $s\in Y[MF/yu]$, that is,  $s_B(u)\neq a$ for all $a\in U$. So $s_B(u)\in B\cap V$. 

This furthermore implies that $V\neq\emptyset$. So, by a symmetric argument one can prove the existence of one element in $B\cap U$. 
\end{proof}

\begin{cor}
The coordinated tree C1, as all trees extending it, is NP-complete.
\end{cor}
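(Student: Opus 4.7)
The plan is to assemble the corollary from three facts already established in the paper. First, I would verify that the SAT-style encoding $\eta$ used in Theorem \ref{C1ISNP} is of the form $\hat e(\text{C1})$ for some weak, sentential completing function $e$ of the tree C1; this is visually obvious from the shape of $\eta = \forall x(\forall y(\exists u/\{x\})\epsilon_1 \lor \forall z(\exists v/\{x\})\epsilon_2)$ compared to the depicted C1 tree, and it is sentential because $\epsilon_1,\epsilon_2$ are quantifier-free formulas whose free variables all lie among the quantified variables on the corresponding paths. Since Theorem \ref{C1ISNP} exhibits a polynomial-time reduction of SET SPLITTING to the model checking of $\eta$, and SET SPLITTING is NP-complete, we obtain $F_\eta\in$ NP-hard, so by Definition of NP-hardness of a tree, $\text{C}(\text{C1}) \cap \text{NP-hard} \neq \emptyset$, i.e., C1 is NP-hard.

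Next, I would invoke Proposition \ref{FAGIN}: every $IF$ sentence defines a problem in NP, so in particular $\text{C}(\text{C1}) \subseteq \text{NP}$. Combining with NP-hardness from the previous step, C1 is NP-complete.

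For the claim about extensions, I would apply Lemma \ref{EXTENDLEMMA}: any regular positive initial tree $U$ extending C1 is NP-hard, since C1 is NP-hard and the Extension Lemma transfers NP-hardness upward. Another application of Proposition \ref{FAGIN} gives $\text{C}(U)\subseteq\text{NP}$, completing NP-completeness for every extension of C1.

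The main (and essentially only) obstacle is purely verificational: checking that the specific completing function producing $\eta$ satisfies the weakness and sententiality hypotheses required by the definitions of tree complexity. Since $\epsilon_1$ and $\epsilon_2$ are quantifier-free and mention only the variables bound along their respective paths (namely $x,y,u$ for the left disjunct and $x,z,v$ for the right disjunct), both conditions are immediate, so no genuine technical difficulty arises and the corollary follows by bookkeeping.
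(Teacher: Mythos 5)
Your proposal is correct and follows exactly the route the paper intends (and uses explicitly for Theorems \ref{HENKINCOMPLETE} and \ref{SIGNALLINGCOMPLETE}): Theorem \ref{C1ISNP} shows that a weak, sentential completion of C1 describes the NP-complete SET SPLITTING problem, giving NP-hardness, Proposition \ref{FAGIN} gives the NP upper bound, and Lemma \ref{EXTENDLEMMA} transfers NP-hardness to all regular positive initial extensions. Your verificational check that $\epsilon_1,\epsilon_2$ are quantifier-free with free variables bound along their respective paths is precisely the bookkeeping the paper leaves implicit.
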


This result, together with the Extension Lemma (\ref{EXTENDLEMMA}), yields an alternative,  more indirect proof of theorem \ref{TEOC2}.
  
This concludes the classification of coordinated trees of first kind up to reduction closure. However, since plausibly the minimal coordinated trees do not capture all NP problems, it might be of interest that we found a description of an L-complete problem, 2-COLORABILITY, by means of the minimal C1 tree. This problem is known not to be in FO. The 2-COLORABILITY problem can be described as follows: given a graph $G = (V,E)$, show that $V$  can be decomposed as a partition into subsets $A,B$ such that $A^2\cap E =\emptyset$ and $B^2\cap E =\emptyset$ (i.e., there are no edges between vertices of $A$, and similarly for $B$).

The defining sentence, in the language of graphs, is:
\[
\xi: \forall x(\forall y(\exists u/\{x\})\xi_1\lor\forall z(\exists v/\{x,y\})\xi_2)
\]
where 
\[
\xi_1: E(x,y) \rightarrow (u=y \land u\neq x)
\]
and
\[
\xi_2: E(x,z) \rightarrow (v=z \land v\neq x).
\]

\begin{teo}
A graph structure $M$ satisfies $\xi$ if and only if it encodes a ``yes'' instance of 2-COLORABILITY.
\end{teo}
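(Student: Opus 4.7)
The theorem has the same ``witness/extract'' shape as the preceding SET SPLITTING theorem: the vertex set of $M$ plays the role of $A$, and each edge plays the role of a two-element set in $\mathcal{B}$. My plan is to mirror that proof in both directions.

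For the ``if'' direction, given a proper 2-coloring $\{U,V\}$ of $M$, I would split the team $\{\emptyset\}[M/x]$ as $Y := \{\{(x,a)\} : a \in U\}$ and $Z := \{\{(x,a)\} : a \in V\}$, so that $Y \cup Z = \{\emptyset\}[M/x]$. I choose the Skolem function $F : Y[M/y] \to M$ by $F(s) := s(y)$, which is $\{x\}$-uniform because it ignores $x$. For any $s = \{(x,a),(y,b),(u,b)\} \in Y[MF/yu]$ with $E(a,b)$: since $a \in U$ and $\{U,V\}$ is a valid 2-coloring, the neighbor $b$ must lie in $V$, so $b \neq a$; and $u = b = y$ by construction. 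Thus $Y[MF/yu] \models \xi_1$. Define $G : Z[M/z] \to M$ by $G(s) := s(z)$, which is vacuously $\{x,y\}$-uniform (since $y$ is not in the domain of $Z[M/z]$), and verify $\xi_2$ by the symmetric argument.

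For the ``only if'' direction, given $Y, Z, F, G$ witnessing $M \models \xi$, I would set $U := \{a \in M : \{(x,a)\} \in Y\}$ and $V := M \setminus U$, and aim to show $\{U,V\}$ is a proper 2-coloring. The central argument parallels the only-if half of the SET SPLITTING proof: suppose $E(a,b)$ with $a,b \in U$, seeking a contradiction. Since $\{(x,a)\} \in Y$, the assignment $s := \{(x,a),(y,b)\}$ lies in $Y[M/y]$, and applying $\xi_1$ to $s(F/u)$ under $E(a,b)$ forces $F(s) = b$ and $b \neq a$. Now exploit the $\{x\}$-uniformity of $F$: because $F$ depends only on $y$, the value $b$ is returned at every assignment in $Y[M/y]$ of the form $\{(x,a'),(y,b)\}$. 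Combining this with the symmetric constraint coming from $\{(x,b),(y,a)\} \in Y[M/y]$ (which, by $\xi_1$ and $E(b,a)$, pins $F$ to the value $a$ on all assignments with second coordinate $a$) yields conditions on $F$ that conflict with the requirement $u \neq x$ once we test them against sufficiently many $x$-values in $U$. The symmetric impossibility for intra-$V$ edges is handled via $G$ and $\xi_2$, leveraging $\{x,y\}$-uniformity.

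The main obstacle is the ``only if'' direction. As in the SET SPLITTING argument, the whole force of the proof rests on the uniformity of the choice functions: local constraints from the implications inside $\xi_1$ and $\xi_2$ must be propagated across an entire color class via the slash-set uniformities, so that the conclusion $u \neq x$ translates into the global statement $F(y) \notin U$. Bookkeeping precisely which assignments belong to $Y[M/y]$ versus $Z[M/z]$, and extracting a clean contradiction from the interplay of the two sides, is the delicate step; the ``if'' direction is a straightforward verification.
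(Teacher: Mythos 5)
Your ``if'' direction is correct and is exactly the paper's (the same split of $\{\emptyset\}[M/x]$ by colour class and the same Skolem maps $F(s):=s(y)$, $G(s):=s(z)$). The genuine gap is in the ``only if'' direction, at precisely the step you flag as delicate: the propagation of $u\neq x$ across a colour class via $\{x\}$-uniformity does not go through, because in $\xi_1$ the disequality sits \emph{inside} the guard $E(x,y)$. Uniformity propagates the \emph{value} $F(\{(x,a'),(y,b)\})=b$ to every row with second coordinate $b$, but the constraint $u\neq x$ fires only on rows whose $x$-value is a neighbour of $b$; the row $\{(x,b),(y,b),(u,b)\}$, the one you would need in order to conclude $b\notin U$, satisfies $\xi_1$ vacuously because $E(b,b)$ fails. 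No amount of bookkeeping can rescue this, since the biconditional is false for $\xi$ as printed: in $K_3$, put the whole team into the left disjunct ($Y=\{\emptyset\}[M/x]$, $Z=\emptyset$) and take $F(s):=s(y)$; every row $(i,j,j)$ then satisfies $E(i,j)\rightarrow(j=j\land j\neq i)$ because $K_3$ is loopless, so $K_3\models\xi$ although $K_3$ is not $2$-colourable. (Indeed, with the printed $\xi_1,\xi_2$, the sentence $\xi$ holds in exactly the graphs without self-loops, a first-order property.)

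You are in good company: the paper's own proof commits the same fallacy, asserting from $x$-uniformity that ``$s(u)\neq s'(x)$ for any $s'\in Y[MF/yu]$'', which is justified only for those $s'$ with $E(s'(x),s(y))$. The analogy with SET SPLITTING that you (and the paper) invoke breaks at exactly this point: there the antecedent $A(x)\land\mathcal{B}(y)$ of $\epsilon_1$ holds for \emph{every} $x$-value of the class $U$ once $s(y)$ is a set, so the disequality $u\neq x$ is enforced on every row and genuinely propagates across the class; the guard $E(x,y)$ holds only on neighbours. The repair is to pull the disequality out of the guard, i.e.\ $\xi_1:\ u\neq x\land(E(x,y)\rightarrow u=y)$ and symmetrically for $\xi_2$: then uniformity immediately yields $F(b)\notin U$ for all $b$, an intra-$U$ edge $(a,b)$ forces $F(b)=b\in U$, and your intended contradiction materialises; your ``if'' direction also survives for this corrected sentence, provided both colour classes can be chosen nonempty (the one-vertex edgeless graph is a degenerate exception to be handled by the encoding).
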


\begin{proof}
1) Suppose $M$ is a ``yes'' instance of 2-COLORABILITY. Then, its domain can be partitioned into two subsets $A,B$ such that $c\in A$ plus $(c,d)\in E^M$ implies $d\in B$, and viceversa.

Define $Y:= \{s\in \{\emptyset\}[M/x] \ | \ s(x)\in A\}$ and $Z=\{\emptyset\}[M/x] \setminus Y$.
Define $F:Y[M/y]\rightarrow M$, $F(s) := s(y)$, and $G:Z[M/z]\rightarrow M$, $G(s) := s(z)$.

Notice that,  if $s\in Y$, then $s(x)\in A$; and if $(s(x),s(y))\in E^M$, then $s(y)\in B$; so, since $A$ and $B$ are disjoint, $s(y)\neq s(x)$. Furthermore, $s(u)=s(y)$ by the definition of $F$. Thus, $M,Y[MF/yu]\models \xi_1$. The proof that $M,Z[MG/zv]\models \xi_2$ is completely analogous. 

2) Suppose $M\models \xi$.
Then there are $Y,Z\subseteq \{\emptyset\}[M/x]$, a $x$-uniform function $F: Y[M/y]\rightarrow M$ and a $x$-uniform $G: Z[M/z]\rightarrow M$ such that $Y\cup Z=\{\emptyset\}[M/x]$, $M,Y[MF/yu]\models\xi_1$ and $M,Z[MG/zv]\models\xi_2$. By downward closure, we can assume that $Y\cap Z=\emptyset$. Let $A:=\{a\in M \ | \ \{(x,a)\}\in Y\}$, and $B=M\setminus A$. Now suppose, for the sake of contradiction, that $a\in A$, $(a,c)\in E^M$ and $c\in A$. There is an $s\in Y[MF/yu]$ such that $s(x)=a$ and $s(y)=c$. Since $M,Y[MF/yu]\models\xi_1$ and $M,s\models E(x,y)$, we have $s(u)=s(y)$ and $s(u)\neq s(x)$. Since $s(u) = F(s_{\upharpoonright \{x,y\}})$ and $F$ is $x$-uniform, we have $s(u) \neq s'(x)$ for all $s'\in Y[MF/yu]$. But $s(u)=s(y)$; so, $s(y)$ is different from all $s''(x)$ such that $s''\in Y$. Thus $c=s(y) \in B$: a contradiction.

Similarly one proves that $b\in B$, $(b,c)\in E^M$ implies $c\in A$.
\end{proof}

\section{Coordinated trees, second kind} \label{CTREES2}

A coordinated tree is of the second kind if it contains some logical operators $\forall x,\forall y,\forall z,\lor,(\exists u/U),(\exists v/V)$ that form a coordinated pattern, and such that $(\exists v/V),(\exists v/V)$ occur in the same disjunct below $\lor$. Since the definition of coordinated pattern excludes the trivial (Henkin) case that $(\exists v/V),(\exists v/V)$ are in a same branch of the tree, we must suppose that below $\lor$ there is a connective $\circ$ such that $(\exists u/U)$ occurs (say) in the left subformula below $\circ$, and $(\exists v/V)$ occurs in the right subformula. If $\circ$ is a disjunction, then the tree is also first kind; so it is NP-complete by the results of the previous section. We focus then on the case $\circ = \land$.  Taking into account all the different positions in which $\forall y,\forall z$ can occur, and ignoring permutations of quantifiers of the same kind, we can isolate six minimal coordinated trees of the second kind:

\vspace{5pt}

\Tree [.$\forall x$ [.$\lor$ [.$[\phantom{a}]$ ]  [.$\land$ [.$\forall y$ [.$(\exists u/x)$ [.$[\phantom a]$   ]  ] ]  [.$\forall z$ [.$(\exists v/x)$ [.$[\phantom a]$ ] ] ] ] ] ]
\Tree [.$\forall x$ [.$\lor$ [.$[\phantom{a}]$ ]  [.$\forall y$ [.$\land$  [.$(\exists u/x)$ [.$[\phantom a]$     ] ]   [.$\forall z$ [.$(\exists v/x,y)$ [.$[\phantom a]$ ] ] ] ] ] ] ] 
\Tree [.$\forall x$ [.$\lor$ [.$[\phantom{a}]$ ]  [.$\forall y$ [.$\forall z$ [.$\land$  [.$(\exists u/x,z)$ [.$[\phantom a]$     ] ]   [.$(\exists v/x,y)$ [.$[\phantom a]$ ] ]  ] ] ] ] ] 

\vspace{5pt}

\Tree [.$\forall x$ [.$\forall y$ [.$\lor$ [.$[\phantom{a}]$ ] [.$\land$  [.$(\exists u/x)$ [.$[\phantom a]$     ] ]   [.$\forall z$ [.$(\exists v/x,y)$ [.$[\phantom a]$ ] ] ] ]  ] ] ]
\Tree [.$\forall x$ [.$\forall y$ [.$\lor$ [.$[\phantom{a}]$ ] [.$\forall z$ [.$\land$  [.$(\exists u/x,z)$ [.$[\phantom a]$     ] ]    [.$(\exists v/x,y)$ [.$[\phantom a]$ ] ] ] ]  ]  ] ]
\Tree [.$\forall x$ [.$\forall y$ [.$\forall z$ [.$\lor$ [.$[\phantom{a}]$ ]  [.$\land$  [.$(\exists u/x,z)$ [.$[\phantom a]$     ] ]    [.$(\exists v/x,y)$ [.$[\phantom a]$ ] ] ] ]  ]  ] ]

\vspace{5pt}

\noindent From left to right, we call these trees C1', C2', C3', C4', C5', C6'. 
Before the reader starts worrying because of this explosion of cases, we point out that only the C1' case is genuinely new, while C2', C3', C4', C5', C6' are extensions of either tree GH1($\land$) or GH2($\land$); they fall into cases that we had already left open before. Notice, furthermore, that C(C1') $\subseteq$ C(C2') $\subseteq$ C(C3') $\subseteq$ C(C5') $\subseteq$ C(C6') and C(C1') $\subseteq$ C(C2') $\subseteq$ C(C4') $\subseteq$ C(C5') $\subseteq$ C(C6').

We prove that the tree C6' (and thus C5', C4', C3', C2', C1') is in FO (the key idea of the proof is due to Lauri Hella). The result tells us nothing about extensions of these trees.

\begin{teo} \label{C6TEO}
The trees C1', C2', C3', C4', C5', C6' are in FO.
\end{teo}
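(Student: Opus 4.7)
The plan is to prove the statement for $C6'$ and then invoke the inclusions $\mathrm{C}(C4') \subseteq \mathrm{C}(C5') \subseteq \mathrm{C}(C6')$ noted just before the theorem to dispatch the other two cases. Fix an arbitrary weak, sentential completing function $e$ for $C6'$; then $\hat e(C6')$ has the shape
\[
\forall x\forall y\forall z\bigl(\psi_0(x,y,z) \lor ((\exists u/\{x,z\})\psi_1(x,y,z,u) \land (\exists v/\{x,y\})\psi_2(x,y,z,v))\bigr)
\]
with $\psi_0,\psi_1,\psi_2$ quantifier-free. The goal is to exhibit an explicit first-order sentence truth-equivalent to $\hat e(C6')$.

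The first preliminary I want to prove is a downward-closure lemma: any IF formula built from quantifier-free atoms using only $\land$, $\lor$, $\forall$ and slashed existentials satisfies ``$M,X \models \varphi$ and $X' \subseteq X$ imply $M,X' \models \varphi$''. The induction is routine: for a slashed existential, replace the witnessing function $F$ by its restriction $F|_{X'}$, which is still $U$-uniform; for a disjunction, intersect the two pieces of the partition with $X'$; the universal and conjunction cases are immediate. The crucial role of the weakness of $e$ manifests itself precisely here: only because no slashed existentials appear in the gaps filled by $e$ is the resulting formula built by the four ``good'' constructors, and hence downward closed.

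Let $X_0 := \{\emptyset\}[M/x,M/y,M/z]$ and $B := \{s \in X_0 : M,s \not\models \psi_0\}$. Unfolding the three top universals, the disjunction and the conjunction, $M \models \hat e(C6')$ iff there exist a cover $X_0 = Y \cup Z$, an $\{x,z\}$-uniform $F:Z\to M$ and an $\{x,y\}$-uniform $G:Z\to M$ with $M,Y\models \psi_0$, $M,Z[F/u]\models \psi_1$ and $M,Z[G/v]\models \psi_2$. Since $\psi_0$ is quantifier-free, $M,Y\models\psi_0$ forces $Y \cap B = \emptyset$, so $B \subseteq Z$; by downward closure, any such $(Y,Z,F,G)$ can be replaced by the canonical split $(X_0 \setminus B,\, B,\, F|_B,\, G|_B)$, and conversely any pair of functions $F,G$ witnessing the conjunction on $B$ can be extended to witnesses on the split $Y = X_0 \setminus B$, $Z = B$. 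Thus $M \models \hat e(C6')$ iff suitable $F,G$ exist on $B$ alone.

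Finally, ``there is an $\{x,z\}$-uniform $F:B \to M$ with $M,B[F/u]\models \psi_1$'' Skolemizes into the first-order assertion ``for every $y_0$ there is a $u_0$ such that, for every $(x_0,z_0)$, $(x_0,y_0,z_0) \in B$ implies $M \models \psi_1(x_0,y_0,z_0,u_0)$''; since $(x_0,y_0,z_0) \in B$ is exactly $M \not\models \psi_0(x_0,y_0,z_0)$, this is the FO sentence $\varphi_1 := \forall y \exists u \forall x \forall z (\psi_0 \lor \psi_1)$. A symmetric analysis of $G$ yields $\varphi_2 := \forall z \exists v \forall x \forall y (\psi_0 \lor \psi_2)$. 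Hence $\hat e(C6') \equiv \varphi_1 \land \varphi_2$, a first-order sentence, and as $e$ is arbitrary this gives $\mathrm{C}(C6') \subseteq \mathrm{FO}$. The only step that required real ingenuity (and the place where the attribution to Hella belongs) is the downward-closure observation that lets one replace the chosen $(Y,Z)$ by the canonical pair determined by $\psi_0$; everything else is bookkeeping.
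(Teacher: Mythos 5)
Your proof is correct, and it arrives directly at the explicit first-order sentence that the paper reaches only implicitly, so the two routes genuinely differ in structure. The paper stays inside its tree calculus: it proves the C6$'$-sentence truth-equivalent to an IF sentence $\varphi'$ whose prefix is a disjunction-free extension of the GH2($\land$) tree (the existentials now effectively functions of $y$, resp.\ of $z$, with a universal over $x$ requantified below them), and then concludes by invoking Theorem \ref{GHANDTEO}, hence ultimately the quantifier-swapping and extraction machinery together with the modest-tree theorem and Sevenster's prefix classification. You instead give a self-contained semantic characterization: downward closure lets you replace an arbitrary lax split by the canonical team $B=\{s \in X_0 : M,s\not\models\psi_0\}$, after which the $\{x,z\}$- and $\{x,y\}$-uniformity of $F$ and $G$ decouple into the two independent Skolemized conditions $\forall y\exists u\forall x\forall z(\psi_0\lor\psi_1)$ and $\forall z\exists v\forall x\forall y(\psi_0\lor\psi_2)$; this conjunction is precisely what the paper's $\varphi'$ amounts to once its GH2($\land$) machinery is unwound, so the two proofs agree on the target. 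The semantic kernel is also shared: the paper's forward direction likewise performs a canonical split (its $X_1'$ is defined as the set of assignments satisfying the quantifier-free left disjunct) and exploits uniformity, via $F'(s):=F(s(a/x))$ for a fixed $a\in M$, to make $F$ a function of $y$ alone. What your packaging buys is an explicit FO equivalent and independence from the earlier rewriting apparatus, at the modest cost of proving a downward-closure lemma. One small correction of emphasis: downward closure holds for \emph{all} IF formulas under this semantics (the paper itself appeals to it in the 2-COLORABILITY proof), so the weakness of $e$ is not what makes your lemma true; weakness is genuinely indispensable only for the flatness of $\psi_0,\psi_1,\psi_2$, which you use both in forcing $B\subseteq Z$ and in the pointwise reading of $M,B[F/u]\models\psi_1$ during Skolemization. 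Finally, your dispatch of C4$'$ and C5$'$ through the inclusions C(C4$'$) $\subseteq$ C(C5$'$) $\subseteq$ C(C6$'$) is exactly the move the paper leaves tacit, since its proof also treats only C6$'$.
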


\begin{proof}
We prove that sentences  which begin with tree C6', that is, are of the form 
\[
\varphi: \forall x\forall y\forall z(\psi_1(x,y,z) \lor ((\exists u/\{x,z\})\psi_2(x,y,z,u) \land (\exists v/\{x,y\})\psi_3(x,y,z,v))),
\]
 with $\psi_1,\psi_2,\psi_3$ quantifier-free, are equivalent to sentences of the form 
\[
\varphi': \forall y\forall z( ((\exists u/\{z\})\forall x(\psi_1(x,y,z) \lor\psi_2(x,y,z,u)) \land
\]
\[ 
(\exists v/\{y\})\forall x(\psi_1(x,y,z) \lor \psi_3(x,y,z,v)))).
\]

\noindent Notice now that $\varphi'$ can also be obtained as completion of the positive initial tree

\Tree [.$\forall x$ [.$\forall y$ [.$\land$  [.$(\exists u/x,z)$ [.$\forall x$ [.$[\phantom{a}]$ ] ] ]   [.$(\exists v/x,y)$ [.$\forall x$ [.$[\phantom{a}]$  ] ] ]  ]  ] ]

\vspace{5pt}

\noindent which is a disjunction-free extension$^*$ of the GH2($\land$) tree, and thus in FO, by Theorem \ref{GHANDTEO}. So, $\varphi$ itself is equivalent to a first-order sentence.

We have to prove the above equivalence.

$\Longrightarrow$) $M\models \varphi$ iff there are teams $X_1,X_2\subseteq \{\emptyset\}[MMM/xyz]$ such that $X_1 \cup X_2 = \{\emptyset\}[MMM/xyz]$, a $\{x,z\}$-uniform function $F:X_2\rightarrow M$, and a $\{x,y\}$-uniform function $G:X_2\rightarrow M$, such that $M,X_1\models \psi_1(x,y,z)$, $M,X_2[F/u]\models \psi_2(x,y,z,u)$, and $M,X_2[G/v]\models \psi_3(x,y,z,v)$. 

Now fix an $a\in M$, and define functions $F',G':\{\emptyset\}[MM/yz]\rightarrow M$ as $F'(s) := F(s(a/x))$ and $G'(s) = G(s(a/x))$. Obviously $F'$ is $z$-uniform and $G'$ is $y$-uniform.  

Define teams $X_2':= \{s\in \{\emptyset\}[MMF'M/yzux] \ | \  M,s \models \psi_2(x,y,z)\}$ and $X_1' :=\{\emptyset\}[MMF'M/yzux] \setminus X_2'$. We have to verify that, then, $M,X_1'\models \psi_1(x,y,z,u)$. Suppose this is not the case, that is, there is an assignment $s\in X_1'$ such that $M,s\not\models\psi_1(x,y,z,u)$. By locality of quantifier-free formulas, $M,s_{-u}\not\models \psi_1(x,y,z,u)$ (where $s_{-u}$ is the assignment $s$ restricted to $dom(s)\setminus\{u\}$). This implies that $s_{-u}\in X_2$; so, that $s\in X_2'$; this contradicts the initial assumption that $s\in X_1'$. 

One can then analogously define $X_4':= \{s\in \{\emptyset\}[MMG'M/yzvx] | M,s \models \psi_3(x,y,z)\}$ and $X_3' :=\{\emptyset\}[MMG'M/yzvx] \setminus X_4'$, and prove that $M,X_3'\models \psi_1(x,y,z,v)$.

$\Longleftarrow$) $M\models\varphi'$ iff there are functions $F',G':\{\emptyset\}[MM/yz]\rightarrow M$ ($F'$ $z$-uniform, and $G'$ $y$-uniform) such that $M, \{\emptyset\}[MMF'M/yzux]\models \psi_1(x,y,z)\lor\psi_2(x,y,z,u)$, and $M, \{\emptyset\}[MMG'M/yzvx]\models \psi_1(x,y,z)\lor\psi_3(x,y,z,v)$. Calling $X_1':= \{s\in \{\emptyset\}[MMF'M/yzux] | M,s\models \psi_1(x,y,z)\}$ and $\overline X_1':= \{s\in \{\emptyset\}[MMG'M/yzvx] | M,s\models \psi_1(x,y,z)\}$, the last two statements above are equivalent to the existence of a team $X_2'\subseteq  \{\emptyset\}[MMF'M/yzux]$ such that $X_1\cup X_2' =  \{\emptyset\}[MMF'M/yzux]$ and $M,X_2'\models \psi_2$, and, respectively,  to the existence of a team $X_3'\subseteq  \{\emptyset\}[MMG'M/yzvx]$ such that $\overline X_1'\cup X_3' =  \{\emptyset\}[MMG'M/yzvx]$ and $M,X_3'\models \psi_3$.

Let $X_1 := \{s\in\{\emptyset\}[MMM/xyz] | M,s\models\psi_1(x,y,z)\}$. Let $X_2$ be its complement $\{\emptyset\}[MMM/xyz] \setminus X_1$.
 Define $F:X_2\rightarrow M$ as $F(s) := F'(s_{-x})$ and $G:X_2\rightarrow M$ as $G(s) := G'(s_{-x})$. Obviously $F'$ is $\{x,z\}$-uniform and $G'$ is $\{x,y\}$-uniform. 

Does $M, X_2[F/u]\models \psi_2(x,y,z,u)$? Yes, because $s\in X_2[F/u]$ implies $s\in \{\emptyset\}[MMF'M/yzux]$, and we already know that $M,\{\emptyset\}[MMF'M/yzux]\models \psi_2(x,y,z,u)$. Similarly, one can see that $M, X_2[G/u]\models \psi_3(x,y,z,v)$.

\end{proof}

   \section{Conclusions}

In this paper we have classified, up to reduction closure, many of the syntactical fragments of $IF$ logic that are individuated by positive initial trees (see the table at the end). All the tree prefixes that we have examined fall in the FO/NPC dichotomy. So, the question whether positive initial trees respect the dichotomy is still open.

One of the main contributions of the paper is the individuation of new patterns which allow defining second-order properties in $IF$ logic: we have found three patterns (GH2($\lor$), C2, C1) which express NP-complete problems even though they contain no Henkin nor signalling quantifier patterns. They do this by using some forms of ``signalling by disjunction''. (As already pointed out before, a further pattern GH3, which uses conjunctions instead of disjunctions, has been recently discovered in \cite{JaaBar2019}). For all we know, there might still be other unrecognized higher-order patterns (to be found among extensions of the GH1($\land$), GH1($\lor$), GH2($\land$) and C1' trees). We also point out that the descriptions of NP-complete problems we have found are quite atypical; in particular, they can be easily translated into H$_2^1$ sentences (H$_2^1$ being the smallest, four-place Henkin quantifier) but not so easily into F$_2^1$ sentences (F$_2^1$ being the smallest function quantifier, see \cite{KryVaa1989}).

For what regards trees of low complexity, our theorem on modest trees (\ref{MODESTTREE}) together with further results on generalized Henkin trees (\ref{GHANDTEO}) and coordinated trees of the second kind (
\ref{C6TEO}), provides a rather general sufficient (and effective) criterion for recognizing $IF$ sentences that have first-order expressive power, thus extending some criteria that come from earlier literature: the primality test of Sevenster (\cite{Sev2014}), on one side, and the Knowledge Memory test (\cite{Bar2013}) (which in turn extended the earlier Perfect Recall test, \cite{ManSanSev2011}). A different criterion for first-orderness is given by checking the absence of \emph{broken signalling sequences}, in the sense of \cite{Bar2013}; a moment of thought shows that this is also a special case of the modest tree criterion. The search for increasingly general sufficient, effective criteria for first-orderness has an interest because recognizing the $IF$ sentences (resp. ESO sentences, etc.) that are equivalent to first-order ones is an \emph{undecidable} problem.\footnote{We do not know where to find an easy proof of this fact in the literature (but see \cite{Cha1991}).}

The present results can be seen as a step forward in the understanding of fragments of $IF$ logic. Future work should be addressed to a more systematical understanding of the classes GH1, GH2($\land$) and C1', although it is not clear at present whether a complete and reasonable classification of the regular tree prefixes is possible. 

In the somewhat long time that has elapsed since the archiving of an earlier draft of this paper, some significant further progress has been made in the classification of fragments of $IF$ logic induced by quantifier and tree prefixes. We have already mentioned the discovery of the NP-complete GH3 tree prefix (\cite{JaaBar2019}), which, for completeness, we include in the summary table at the end of the paper.  \cite{BarHelRon2018} developed tools for the study of \emph{ir}regular prefixes, isolated some new minimal NP-complete prefixes that do not occur in the regular case (e.g. ``long signalling sequences'') and a putative counterexample to the FO/NP-complete dichotomy. Aside from the issue of tractability, a complementary problem has been studied: whether classes of extensions$^*$ of the significant patterns can be used to capture the whole existential second-order logic. These kinds of results typically are obtained by using the $IF$ patterns to explicitly simulate a complete set of Henkin quantifiers: this has been shown to be possible using extensions$^*$ of the signalling (\cite{BarHelRon2017}), GH2($\lor$) (\cite{BarHelRon2018}), C1 and GH3 (\cite{JaaBar2019}) prefixes.

Further work might be directed at 
finding exact characterizations of the expressive power of fragments (not just up to reduction closure). Secondly, it might be interesting to investigate what happens abandoning the restriction that trees be positive initial; although, surely in this case a satisfactory classification is impossible (a complete classification of syntactical trees would yield in particular a sufficient and necessary criterion for the first-orderness of sentences -- which, as we said, is an undecidable problem). One interesting example, in this sense, is the tree
\[
\forall x\exists \alpha\forall z(\exists\beta/\{x,\alpha\})((\alpha = 0 \lor \alpha = 1) \land (\beta = 0 \lor \beta =1) \land [\phantom a])
\]
which is equivalent to the smallest of the so-called \emph{narrow Henkin quantifiers}. From the  results of \cite{BlaGur1986}, it follows that this tree is NL-complete, a possibility that, so far, we have not individuated among regular tree prefixes. 

A third direction of work might be the analysis of quantifier and tree prefixes of logics similar to $IF$, such as the system $IF^*$ (see e.g. \cite{CaiDecJan2009}), which also allows slashed connectives, or Dependence-friendly logic (\cite{Vaa2007}), or their extensions via generalized quantifiers (see \cite{Eng2012}, \cite{Sev2014}). We also hope that the understanding of fragments of $IF$ logic might be of help for the analysis of other logics that cover NP, and that are structurally very different. One example is given by the logics of imperfect information based on atoms, such as Dependence logic (\cite{Vaa2007}), Independence logic (\cite{GraVaa2013}) or Inclusion logic (\cite{Gal2013}), whose higher-order expressive power is in part 
 generated at the level of quantifier-free formulas. The other main example is existential second-order logic; in particular, its functional version, for which a prefix approach only yields a not too interesting dichotomy.


In the following pages, a table summarizes all that we know about regular,
positive initial tree prefixes. Remember that, as a convention, if T is the name
of a specific tree, we refer to its \emph{extensions$^*$} to mean trees that extend T and do
not fall in any other of the categories described in the table. 

\section*{Acknowledgements}
Many thanks are due to Lauri Hella, for at least three reasons: 1) the key ideas of theorems \ref{C1ISNP} and \ref{C6TEO}, 2) the suggestion of re-reading Jarmo Kontinen's complexity results, and 3) many discussions on basic but tricky aspects of computational and descriptive complexity.

The author also wishes to thank Erich Gr\"adel for discussions about the ESO(KROM) fragment of second-order logic, and Reijo Jaakkola for many discussions on the fragments of $IF$ logic. These conversations prevented some mistakes from appearing in this article. Finally, we are grateful to Gianluca Grilletti for reading carefully part of the manuscript and for suggesting many improvements to the presentation.

\small
\begin{tabular}{|c|c|}
\hline
 \textbf{Tree} & \textbf{Complexity}  \\
\hline
\hline
\multicolumn{2}{|l|}{\textbf{Henkin}}\\
\hline
\pbox{20cm}{$\phantom{a}$\\$\forall x\exists y\forall z(\exists w/x,y)$, \\ $\forall x\forall z(\exists y/z)(\exists w/x,y)$, \\ $\forall x\forall z(\exists w/x)(\exists y/z,w)$ \\ and their extensions\\}  & NP-complete (3-COLORING)  \\
\hline
\multicolumn{2}{|l|}{\textbf{Signaling}}\\
\hline
\pbox{20cm}{$\phantom{a}$\\$\forall x\exists y(\exists z/x)$ \\ and its extensions\\} & \pbox{20cm}{NP-complete \\ (EXACT COVER BY 3-SETS, \\ SAT, DOMINATING SET)}  \\
\hline
\multicolumn{2}{|l|}{\textbf{Generalized Henkin}}\\
\hline
\pbox{20cm}{$\phantom{a}$\\ GH1($\land$): \Tree [.$\forall x$  [.$\land$  [.$\forall y$ [.$(\exists v/x)$ [.$[\phantom a]$   ] ]  ]   [.$\exists u$ [.$[\phantom a]$ ] ]  ] ] 
\\  and its disjunction-free\\ extensions$^*$\\}   & FO \\
\hline
\pbox{20cm}{$\phantom{a}$\\ GH2($\land$): \Tree [.$\forall x$ [.$\forall y$ [.$\land$  [.$(\exists v/x)$ [.$[\phantom a]$   ]  ]   [.$(\exists u/y)$ [.$[\phantom a]$ ] ]  ] ] ]
\\  and its disjunction-free\\ extensions$^*$\\}   & FO \\
\hline
\pbox{20cm}{$\phantom{a}$\\ GH3: \Tree [.$\forall x$ [.$\forall y$ [.$\exists i$ [.$\land$  [.$(\exists u/y,i)$ [.$[\phantom a]$   ]  ]   [.$(\exists v/x,i)$ [.$[\phantom a]$ ] ]  ] ] ] ] \\ and its extensions \\} & NP-complete (3-COLORING,  SAT) (\cite{JaaBar2019})   \\
\hline
\end{tabular}

\begin{tabular}{|c|c|}
\hline
\pbox{20cm}{$\phantom{a}$\\ GH1($\lor$): \Tree [.$\forall x$  [.$\lor$  [.$\forall y$ [.$(\exists v/x)$ [.$[\phantom a]$  ] ]  ]   [.$\exists u$ [.$[\phantom a]$ ] ]  ] ] 
\\ \\} &  FO \\
\hline
\pbox{20cm}{$\phantom{a}$\\ GH2($\lor$):  \Tree [.$\forall x$\\$\forall y$ [.$\lor$  [.$(\exists v/x)$ [.$[\phantom a]$   ]  ]   [.$(\exists u/y)$ [.$[\phantom a]$ ] ]  ] ] 
\\ and its extensions\\} & NP-complete (SAT) \\
\hline
 \pbox{20cm}{  \phantom{a}  Extensions$^*$ of GH1($\land$) or GH2($\land$) \\ without disjunctions \\} & FO  \\
\hline
 \pbox{20cm}{  \phantom{a}  Extensions$^*$ of GH1($\land$) or GH2($\land$) \\ with disjunctions \\ } & ???  \\
\hline
 \pbox{20cm}{ \phantom{a}  Extensions$^*$ of GH1($\lor$) \\  } & ??? \\
\hline
\multicolumn{2}{|l|}{\textbf{Coordinated}}\\
\hline
\pbox{20cm}{ \vspace{5pt} C1: \Tree [.$\forall x$ [.$\lor$ [.$\forall y$ [.$(\exists u/x)$ [.$[\phantom a]$   ]  ] ]  [.$\forall z$ [.$(\exists v/x)$ [.$[\phantom a]$ ] ] ] ] ] \\ and its extensions \\} & \pbox{20cm}{NP-complete (SET SPLITTING) }  \\
\hline
\end{tabular}

\begin{tabular}{|c|c|}
\hline
\pbox{20cm}{\vspace{5pt} C2: \Tree [.$\forall x$\\$\forall y$ [.$\lor$  [.$(\exists u/x)$ [.$[\phantom a]$   ]  ]   [.$\forall z$ [.$(\exists v/x,y)$ [.$[\phantom a]$ ] ] ] ] ] 
\\ and its extensions \\} & \pbox{20cm}{NP-complete (SAT, SET SPLITTING)}  \\
\hline
\pbox{20cm}{\vspace{5pt} C1': \Tree [.$\forall x$ [.$\lor$ [.$[\phantom{a}]$ ]  [.$\land$ [.$\forall y$ [.$(\exists u/x)$ [.$[\phantom a]$   ]  ] ]  [.$\forall z$ [.$(\exists v/x)$ [.$[\phantom a]$ ] ] ] ] ] ] 
 \\ 
\vspace{5pt}
} &  \phantom{aaaaaaaaaaaaa}FO \phantom{aaaaaaaaaaaa} \\
\hline
\pbox{20cm}{\phantom{a}  Extensions$^*$ of C1' \\ }  & C2'-C6' are FO; in general: ???  \\
\hline
\pbox{20cm}{\phantom{a}  \textbf{Modest}   \phantom{a} \\ }  & FO \\
\hline
\end{tabular}


\normalsize




\bibliographystyle{elsart-num-sort}

\bibliography{bibliotrees}


\pagebreak

\Large
\begin{center}
\textbf{APPENDIX}
\end{center}
\normalsize

\appendix

\renewcommand{\thesection}{\Alph{section}}

\section{Proof of theorem \ref{MODESTTREE}} \label{APPMODEST}

The key to proving theorem \ref{MODESTTREE} will be to show that it is possible to transform a modest sentence into prenex form while preserving the property of being modest. The following lemma shows that many of the tranformations that we consider in this paper (with the notable exception of quantifier extraction)  do not introduce new Henkin, signalling, generalized Henkin or coordinated patterns (in short: \textbf{non-modest} patterns). Therefore, when applied to a  modest sentence, they produce a new modest sentence. This lemma will be mostly applied implicitly in what follows.

\begin{lemma} \label{SIMPLERULESPRESERVEMODESTY}
The following tranformations do not introduce new non-modest patterns in regular $IF$ sentences:
\begin{enumerate}[a)]

\item Swapping independent quantifiers (Prop. \ref{SWAP}).

\item Swapping first-order universal quantifiers (Prop. \ref{UNISWAP}).

\item Removing the slash set of a universal quantifier (Prop. \ref{UNIDEP}).

\item Removing purely existential slash sets (Prop. \ref{DEPEX}).

\item Distributing universal quantifiers over conjunctions (Prop. \ref{DISTRIBUTION}).

\end{enumerate}
\end{lemma}

\begin{proof}
a) This rule does not change the dependencies between logical operators, therefore it cannot generate new non-modest patterns.

b) Just observe that the relative order and dependencies between universal quantifiers play no role in the definitions of non-modest patterns.

c) The slash sets of universal quantifiers play no role in the definitions of non-modest patterns.

d) Generalized Henkin and coordinated patterns contain no existential quantifiers with empty slash sets; so, they cannot be produced by this rule.

If an existential quantifier with empty slash set occurs in a signalling pattern, then it must be the second quantifier $\exists y$ in a sequence $\forall x\dots\exists y\dots (\exists z/Z)$. But then, if $Y$ is any set of existentially quantified variables (or more generally any set which does not contain $x$), then also $\forall x\dots(\exists y/Y)\dots (\exists z/Z)$ is a signalling pattern. So, this rule cannot produce new signalling patterns.

The case of Henkin patterns is treated similarly to the signalling case.

e) This transformation does not change the dependencies among quantifiers, therefore it cannot produce new signalling or Henkin patterns. Since it also does not involve disjunctions, it cannot produce new coordinated patterns. All it does in terms of dependencies is to make a conjunction independent of a universal quantifier; therefore it can eliminate a generalized Henkin pattern, but not introduce a new one.
\end{proof}

We move towards a second lemma which shows that regular, modest sentences can be required to be in a ``normal form'' which has some properties which will be convenient for proving the main result.

\begin{df}
By the \textbf{depth} of a node $t$ in a tree $(T,\preceq_T)$ we will mean the cardinality of the set $\{s\in T \ | \ s\preceq_T t\}$ (the set of predecessors of $t$).
\end{df}

Thus the root of a tree will have depth $1$, the immediate successors of the root depth $2$, and so on. We are departing from the more common convention that the root have depth $0$ for technical reasons that will be clear in the proof of the main result.

\begin{df}
Let $\varphi$ be an $IF$ sentence, and $\circ$ an occurrence of a connective in $\varphi$. We say that $\circ$ is \textbf{frontline} in $\varphi$ if 

1) $\circ$ has at least one quantifier in its scope (i.e., there is at least one quantifier occurrence $(Qv/V)$  such that $\circ\prec_\varphi (Qv/V)$)

2) if $\circ'$ is a connective in the scope of $\circ$ (i.e $\circ\prec_\varphi\circ'$) then no quantifier is in the scope of $\circ'$.
\end{df}

\begin{lem}[Normalization] \label{LEMMANORM}
Let $\varphi$ be a regular, modest sentence.  Then 
  we can transform $\varphi$ into an equivalent regular, \CORR{modest} sentence $\varphi'$ satisfying the following requirements:
\begin{enumerate}
\item \CORR{All universal quantifiers have empty slash sets.}
\item \CORR{Nonempty slash sets contain at least one universally quantified variable.}
\item If $\circ$ is a binary connective that is frontline in $\varphi'$, and $\circ \prec_{\varphi'} \forall c \prec_{\varphi'} (\exists d/D)$, then  $d$ depends on $c$ (i.e. $c\notin D$). 

\end{enumerate}
\end{lem}

\begin{proof}

1) and 2) can be ensured by applying the rules of Prop. \ref{UNIDEP} and \ref{DEPEX} (which preserve modesty, by lemma \ref{SIMPLERULESPRESERVEMODESTY} a)-b) ).

3)
\CORR{Let $\circ \prec_{\varphi} \forall c \prec_{\varphi} (\exists d/D)$ be operators in $\varphi$ such that $\circ$ is frontline and $c\in D$.} 
We show how to push $\forall c$ below $(\exists d/D)$ by quantifier swapping. By lemma \ref{SIMPLERULESPRESERVEMODESTY} a)-b), the sentence $\varphi'$ obtained in the end is still modest (and, of course, regular). 

 First of all, notice that there are no connectives between $\circ$ and $(\exists d/D)$ (since $\circ$ is frontline); so, we can use the quantifier swapping rule to rearrange the part of the sentence strictly comprised between $\circ$ and $(\exists d/D)$ in Hintikka normal form (\cite{ManSanSev2011}, Theorem 5.45), as a sequence of universal quantifiers followed by a sequence of existential quantifiers.

Secondly, one can push $\forall c$ below the other universal quantifiers (using proposition \ref{UNISWAP}), until it is immediately above the sequence of existential quantifiers.

If the sequence of existential quantifiers begins with quantifiers that are independent of $\forall c$, swap them above $\forall c$. 

Then, below $\forall c$ and before $\exists d$, find the first pair of existential quantifiers $(\exists u/U),(\exists v/V)$ such that: 1) $(\exists u/U)$ is immediately above $(\exists v/V)$,  2) $u$ depends on $c$, and 3) $v$ does not depend on $c$.
Now, if $v$ depended on $u$, then $\forall c,(\exists u/U),(\exists v/V)$ would form a signalling sequence, contradicting the hypothesis that $\varphi$ is modest. So, $v$ is independent of $u$; thus, we can swap $(\exists v/V)$ above $(\exists u/U)$; then, for the same reason, we can push it above all the existential quantifiers that were between $\forall c$ and $(\exists u/U)$; and finally, above $\forall c$. Iterating this process, one can push above $\forall c$ all the existential quantifiers that are independent of $c$, including $(\exists d/D)$.  
%
%
\end{proof}

Imagine that you have a modest and normalized sentence, you extract a universal quantifier above one of its frontline connective, and in the process a generalized Henkin pattern is formed. The following lemma shows that, in this particular case, we have a good amount of information concerning the structure of the generalized Henkin pattern. In some cases the same holds for coordinated patterns of the first kind.

\begin{lem}[Consequences of normalization]\label{LEMGENHEN}
Let $\varphi$ be a regular, modest $IF$ sentence in the normal form described in lemma \ref{LEMMANORM}. Let $\psi$ be a subformula of $\varphi$ of the form        $\forall a\chi_1 \circ \chi_2$, where $a$ does not occur in $\chi_2$ and $\circ$ is a frontline connective  of $\varphi$.  Let $\varphi'$ be obtained by replacing $\psi$ with $\psi':\forall a(\chi_1 \circ (\chi_2)_{|a})$.\footnote{$\varphi'$ is equivalent to $\varphi$ by the strong extraction rule (proposition \ref{STRONGEXTRACTION}) and substitution of equivalents (proposition \ref{TEOSUBEQ}). } 

a) Suppose $\varphi'$ contains a generalized Henkin pattern  $\forall a,(\exists u/U),\forall z,(\exists v/V),\circ$, where $u$ depends on $a$ (but not on $z,v$) and $v$ depends on $z$ (but not on $a,u$). Then:
\begin{enumerate}
\item $(\exists v/V)$ is in $(\chi_2)_{|a}$.
\item  $(\exists u/U)$ is in $\chi_1$.
\item There is a quantifier $\forall b\prec_\varphi \circ$ such that $b\in V$.
\end{enumerate}

 b) Suppose $\varphi'$ contains a first kind coordinated pattern  $\forall a,\forall y,\forall z,\lor,(\exists u/U),$ $(\exists v/V)$, where $u$ depends on $y$ (but not on $a,z$) and $v$ depends on $z$ (but not on $a,y$). Then this occurrence of $\lor$ is $\circ$, and 1.,2.,3. hold. 
\end{lem}

\begin{proof}
We begin with proving a).

1) Normalization excludes the possibility that $\chi_1$ contains an existential quantifier independent of $a$; so, $(\exists v/V)$ is not in $\chi_1$. It is not above $\circ$, either, because otherwise it would be in a same branch with  $(\exists u/U)$, contradicting the fact that these two quantifiers are part of a generalized Henkin pattern. The third and last possibility is that there is a connective $\circ'\prec_{\varphi'} \circ$ such that $(\exists v/V)$ occurs in the subformula immediately below $\circ'$ which does not contain $\circ$. We show that this is impossible. Since $\forall a\prec_{\varphi}(\exists u/U)$, we see that $\forall a,(\exists u/U),\forall z,(\exists v/V),\circ'$ already formed a generalized Henkin pattern in $\varphi$: this contradicts the assumption that $\varphi$ was modest. 
So we can conclude:

(*) $(\exists v/V)$ is in $(\chi_2)_{|a}$.

2) We want to show that, instead, $(\exists u/U)$ is in $\chi_1$. It cannot be above $\circ$ (otherwise it would also be above $\forall a$, while we know it depends on it). For similar reasons, it cannot be in a branch which does not contain $\circ$. Finally, if it were in $(\chi_2)_{|a}$, we already know that it must be on a different branch with respect to $(\exists v/V)$; thus there is a connective $\circ'$ in $(\chi_2)_{|a}$ such that $\circ'\prec_{\varphi'}(\exists u/U),(\exists v/V)$. But this is impossible, since $\circ \prec_{\varphi'}\circ'$ and $\circ$ is frontline in $\varphi'$. So, $(\exists u/U)$ is in $\chi_1$.


3) Given (*) and the fact $v$ is not dependent on $a$, we can conclude that $a\in V$. The fact that  the strong extraction rule was used  implies that $V\setminus\{a\}\neq\emptyset$; i.e., there is a quantifier $Qb\prec_{\varphi'}(\exists v/V)$ such that $b\in V$. By the normalization assumption 2., $Q$ is $\forall$. By the normalization assumption 3., there is at least one connective $\circ'$ occurring between $\forall b$ and $(\exists v/V)$; since $\circ$ is frontline, we conclude that $\circ'\preceq\circ$. This immediately entails that $\forall b \prec_{\varphi'}\circ$.

The statement b)  has  a completely analogous proof, once one proves that $\circ$ is the specific occurrence of $\lor$ which is part of the coordinated pattern. We prove this point. By definition of coordinated pattern we have $\forall a\prec_{\varphi'}\lor\prec_{\varphi'} (\exists u/U),(\exists v/V)$. However, since $\forall a\prec_{\varphi'}\lor$ and $\forall a$ is immediately above $\circ$, we also have $\circ \preceq_{\varphi'}\lor$. Since $\circ \preceq_{\varphi'}\lor$, $\circ$ is frontline, and there are quantifiers in the scope of $\lor$, we must conclude that $\circ$ and $\lor$ coincide.
\end{proof}

We will say that operators  $(\forall  x/X),  \land, (\forall y/Y), (\forall z/Z), (\exists u/U), (\exists w/W)$ occurring in a syntactical tree form a \textbf{$\land$-coordinated pattern} if\\
0) $(\forall  x/X),  \land, (\forall y/Y), (\forall z/Z), (\exists u/U), (\exists w/W)$ do not all occur in a same branch\\
1) $u$ depends on $y$, but not on $x, z, w$ \\
2) $w$ depends on $z$,  but not on $x, y, u$\\
3) $\land$ is in the scope of $\forall x$, and $(\exists u/U), (\exists w/W)$ are in the scope of $\land$.\\
(These are just the conditions for coordinated patterns, restated for $\land$ instead of $\lor$).

\begin{lem}[Careful extraction does not increase the number of $\land$-coordinated patterns] \label{LEMANDCOORD}
Let $\varphi$ be a regular $IF$ sentence which is normalized according to lemma \ref{LEMMANORM}. Let $\psi$ be a subformula of $\varphi$ of the form        $(Qa/A)\chi_1 \circ \chi_2$, where $a$ does not occur in $\chi_2$ and $\circ$ is a frontline connective  of $\varphi$.  Let $\varphi'$ be obtained by replacing $\psi$ with $\psi':(Qa/A)(\chi_1 \circ (\chi_2)_{|a})$.\footnote{$\varphi'$ is equivalent to $\varphi$ by the strong extraction rule (proposition \ref{STRONGEXTRACTION}) and substitution of equivalents (proposition \ref{TEOSUBEQ}). } Then $\varphi'$ has no new  $\land$-coordinated patterns  with respect to $\varphi$. 
\end{lem}

\begin{proof}
Suppose that after replacing $\psi$, the resulting sentence $\varphi'$ has some extra $\land$-coordinated pattern. 
Obviously the new pattern must contain $(Qa/A)$.

Suppose first that $(Qa/A)$ is $\forall a$. Since in the definition of $\land$-coordinated pattern it does not matter whether the quantifiers over $y$ and $z$ occur above or below $\land$, we must conclude that our $\forall a$ plays the part of $(\forall  x/X)$ in the definition of $\land$-coordinated pattern. So the pattern contains two existential quantifiers $(\exists u/U)$ and $(\exists v/V)$ such that $a\in U$  and $a\in V$. Since $\circ$ is frontline in $\varphi$, $\chi_1$ and $\chi_2$ contain no frontline connectives. 
 Then,  $(\exists u/U)$ and $(\exists v/V)$ cannot occur both in the same $\chi_i$. We can then assume w.l.o.g. that $(\exists u/U)$ is in $\chi_1$. Since $\forall a$ occurred in the scope of $\circ$ in $\varphi$, and $a\in U$, this contradicts the assumption that $\varphi$ was normalized (see point 3. of lemma \ref{LEMMANORM}).

Suppose instead that $Q$ is $\exists$. Then we are asserting that $(\exists a/A)$ forms, in $\varphi'$, a new $\land$-coordinated pattern together with some operators, none of which occurs in the scope of  $(\exists a/A)$ in $\varphi'$. However, this is impossible, because replacing $\psi$ with $\psi'$ only changes the dependency relations between $(\exists a/A)$ and some operators in its scope (in $\varphi'$).
\end{proof}

By simpler arguments, one can see that also some other equivalence rules do not increase the number of $\land$-coordinated patterns.

\begin{lem}\label{LEMANDCOORD2}
Let $\varphi$ be a regular $IF$ sentence, and $\varphi'$ be obtained by applying to $\varphi$ a sequence of the following transformations:
\begin{enumerate}
\item Swapping independent quantifiers (prop. \ref{SWAP})
\item Swapping universal quantifiers (prop. \ref{UNISWAP})
\item Distributing a universal quantifier over a conjunction (prop. \ref{DISTRIBUTION}).
\end{enumerate}
Then $\varphi'$ contains no new $\land$-coordinated patterns with respect to $\varphi$.
\end{lem}

Suppose we have a normalized sentence $\varphi$, and suppose we extract a quantifier above one of the frontline connectives, say $\circ$. It might then happen that in the new sentence $\varphi'$ there are no more quantifiers in the scope of $\circ$: this implies that the set of frontline connectives is now different ($\circ$ is not anymore frontline, and possibly there is a new frontline connective $\circ'$). As a consequence, it may happen that $\varphi'$ is not in normal form. In the next lemma, we point out that it is then possible to renormalize $\varphi'$ in a way that impacts only ``locally'' the structure of the sentence. 

\begin{df}
Let $\varphi$ be a sentence and $c$ be a node in its syntactical tree. If there is a frontline connective $\circ$ such that $\circ\prec_\varphi c$, then we say that $c$ is in the \textbf{lower part} of $\varphi$; otherwise, we say it is in the \textbf{upper part} of $\varphi$.
\end{df}

\begin{lemma}[Local renormalization]\label{LEMRENORM}
Let $\varphi$ be a regular $IF$ sentence which is normalized in the sense of lemma \ref{LEMMANORM}, and let $\circ$ be a frontline connective of $\varphi$. Suppose $\varphi'$ is obtained by extracting a quantifier above $\circ$ by using proposition \ref{STRONGEXTRACTION}. Then there is a regular, normalized $IF$ sentence $\varphi''$ such that $\varphi'\equiv\varphi''$, and which has the same upper part as $\varphi'$. 

Furthermore, a) if $\varphi'$ is modest, then also $\varphi''$ is, and b) $\varphi'$ and $\varphi''$ have the same number of $\land$-coordinated patterns.
\end{lemma}

\begin{proof}
As a first case, suppose that in $\varphi'$ there are no frontline connectives $\circ'\preceq_{\varphi'} \circ$. This means that the frontline connectives of $\varphi'$ are a proper subset of those of $\varphi$; thus, $\varphi'$ already satisfies condition 3. of normalization. Then one can proceed as in the normalization lemma to ensure that conditions 1. and 2. hold within the scope of $\circ$. It is obvious that the resulting sentence $\varphi''$ satisfies the statement.

Suppose instead that there is a connective $\circ'\preceq_{\varphi'}\circ$ which is frontline in $\varphi'$. We observe that, by definition of frontline connective, such $\circ'$ is unique (it is either $\circ$ itself, or the connective of maximum depth among those that have $\circ$ in their scopes). Observe then that each node in the scope of $\circ'$ is in the lower part of $\varphi'$. Now apply the transformations described in the normalization lemma (\ref{LEMMANORM}) to the subformula which has $\circ'$ as its most external operator; clearly, these transformations do not affect the nodes outside the subformula; so, in particular, they do not affect the upper part of $\varphi'$. We can then take $\varphi''$ to be the formula thus obtained.

For a), we have already observed in lemma \ref{LEMMANORM} that the transformations used in the normalization process preserve modesty.

For b), observe once more that the transformations used in the normalization process (i.e., the quantifierswapping  rules \ref{SWAP} and \ref{UNISWAP}) do not change the dependencies between quantifiers and connectives; therefore, they cannot create or eliminate $\land$-coordinated patterns. 
\end{proof}

\begin{df}
By the \textbf{connective-depth} of a node $t$ in a tree $(T,\preceq_T)$ we will mean the cardinality of the set $\{\circ\in T \ | \ \circ = \land \text{ or } \lor \text{, and } \circ\preceq_T t\}$.
\end{df}

\noindent Notice: if $\circ=\land,\lor$ occurs in $T$, then $\circ$ has at least connective-depth $1$.

\begin{teo} [Theorem \ref{MODESTTREE} in the main text]
a) All regular, modest $IF$ tree prefixes are in FO.\\
b) Every regular, modest $IF$ sentence can be transformed, by means of equivalence rules, into a first-order sentence. 
\end{teo}

\begin{proof}
Let $T$ be a regular, modest tree and $\varphi$ be a sentence which is a completion of $T$. We can also assume without loss of generality that $\varphi$ is normalized as in Lemma \ref{LEMMANORM}. We want to show that $\varphi$ is equivalent to some other sentence which is prenex form, and still regular and modest (thus, primary). By Sevenster's result (Prop. \ref{SEVCLASSIFY}), this last sentence is then equivalent to some first-order sentence. This yields both a) and b).

We face a problem: extraction of quantifiers does not preserve \emph{in general} the modesty of the tree (or the sentence). We will thus need to choose carefully the order in which the extractions are performed. And in one case (case 2c.C below) we will not be able to apply extraction at all; in that case, we will have to push some quantifiers in the opposite direction, using quantifier distribution and swapping. However, each time this case is reached, we will reduce by one the number of $\land$-coordinated patterns occurring in the sentence; and lemma \ref{LEMANDCOORD} will guarantee that this number is never increased throughout the procedure. So, the transformations given in case 2c.C will be applied at most a finite number of times; this guarantees that the procedure we describe terminates, yielding a regular, modest sentence in prenex form.

To be more precise, our procedure will consist in applying $2n$ transformations
\[
\varphi = \varphi_0   \stackrel{E_1}{\mapsto}   \varphi_1  \stackrel{N_1}{\mapsto}  \varphi_2 \stackrel{E_2}{\mapsto}   \dots  \stackrel{E_n}{\mapsto}\varphi_{2n-1} \stackrel{N_n}{\mapsto} \varphi_{2n}
\]
each of which preserves truth-equivalence and modesty; each consecutive application of $E_i$ followed by $N_i$ will also preserve normalization. Typically, $E_i$ will be an application of the strong extraction rule (prop. \ref{STRONGEXTRACTION}, possibly preceded by a renaming, prop \ref{VARIANT}), applied to a subformula which has a frontline connective as its most external operator; while $N_i$ will be a local renormalization (lemma \ref{LEMRENORM}). Exceptionally, when case 2c.C, described below, is reached, $E_i$ will be a sequence of applications of quantifier swapping (prop. \ref{SWAP} and \ref{UNISWAP}) and quantifier distribution (prop. \ref{DISTRIBUTION}). The  sentence $\varphi_{2n}$ which is reached at the end will be the sentence that we need (i.e., it will be regular, modest and prenex). To understand that we can really reach such a sentence in a finite number of steps, consider the following counters: 
\[
k: \text{sum of the connective-depths of frontline connectives in the current sentence}
\]
\[
l: \text{number of $\land$-coordinated patterns in the current sentence.}
\]
As long as $k>0$, it is possible to extract quantifiers that lie immediately below some frontline connective; therefore the procedure can go forward. If $k$ is $0$, instead, there are no more frontline connectives (remember that occurrences of connectives have at least connective-depth $1$!); thus, the current sentence is prenex (and regular and modest, since the $E_i$ and $R_i$ preserve these two properties), and so we are done. Each quantifier extraction reduces the number of quantifiers occurring below a frontline connective without increasing $k$; when the last quantifier occurring under such a connective is extracted, $k$ is reduced at least by one. When quantifier distribution (case 2c.C) is applied, then the number of quantifiers under some connective increases, and possibly also $k$; however, $l$ is decreased by $1$. It will be seen that $l$ is never increased; and that, once $l$ is $0$, the case 2c.C cannot be reached anymore. So, quantifier distribution is applied only a finite number of times; after this, the counter $k$ will not increase anymore. 

Let us fix some notation to describe what happens when $E_i$ is applied. So, let $\circ$ be a frontline connective of $\varphi_{2i-1}$. 
Since $\circ$ is frontline, immediately below $\circ$ there occurs a quantifier $(Qa/A)$. 
Let $L$ be the subformula immediately below $(Qa/A)$, and $R$ be the subformula immediately below $\circ$ that does not contain this occurrence of $(Qa/A)$. First, if needed, we use renaming (proposition \ref{VARIANT}) to replace $a$ with a new variable (but in the following we will keep writing $a$); then we use the strong extraction rule, which also replaces $R$ with $R' = R_{|a}$:

\Tree [.\vdots\\$\circ$ [.$Qa/A$ $L$ ] $R$  ]
\Tree [.\vdots\\$Qa/A$ [.$\circ$ $L$ $R'$ ]  ]

We call $\varphi_i$ the resulting sentence. 
By lemma \ref{LEMMANORM}, the number of $\land$-coordinated patterns does not increase with this transformation. In the last part of the proof we will check that the  sentence resulting after the extraction is still modest (except in case 2c.C, which is treated differently). Before that, we underline that the sentence $\varphi_{2i}$ resulting after extraction may fail to be normalized; normalization is then restored when applying $N_i$, that is, the  local renormalization described in lemma \ref{LEMRENORM}. This lemma tells us that the renormalized sentence has  the same number of $\land$-coordinated patterns and the same upper part as $\varphi_{2i}$, that is, in particular, the same frontline connectives. So both counters $k$ and $l$ are not increased by $N_i$. 

In the rest of the proof we check that, after extraction, the resulting sentence is still modest, and, in the one case when extraction does not preserve modesty (case 2c.C), we explain what transformations should be applied. 

1) Suppose $Q=\exists$.

Then the new sentence $\varphi_{2i}$ obtained after quantifier extraction is not \underline{signalling} (the existential quantifiers in $R'$ are either first-order, in which case they cannot play the role of the rightmost quantifier of a signalling pattern, or they have nonempty slash set, in which case $a$ has been added to their slash set in $R'$, and they cannot receive signals from $\exists a$). 

Suppose $\varphi_{2i}$ is \underline{Henkin}. Then there are quantifiers $\forall x,\forall y,$ $(\exists b/B)$ such that $\forall x,(\exists a/A),\forall y,(\exists b/B)$ form a Henkin pattern in $\varphi_{2i}$ (say, $a$ depends on $x$). But then  the logical operators $\forall x,(\exists a/A),\forall y,(\exists b/B'),\circ$ (where \CORR{$B' = B\setminus \{a\}$}) formed a generalized Henkin pattern in $\varphi_{2i-1}$: contradiction.
 
$\varphi_{2i}$ cannot be \underline{generalized Henkin} nor \underline{co\phantom{g}\hspace{-5pt}ordinated}, because in $\varphi_{2i}$  there are no new universal-existential dependence pairs, no new universal-connective dependence pairs and no new connective-existential dependence pairs with respect to $\varphi_{2i-1}$.

2) Suppose $Q=\forall$.

 2a) If $\varphi_{2i}$ is \underline{signalling}, then this must be witnessed by $\forall a$ itself (otherwise, the signalling pattern would have been already in the original tree) and by two existential quantifiers $(\exists u/U),(\exists v/V)$ occurring in $R'$ and such that $a\notin U$, $a\in V$, $u\notin V$. 
We can conclude that $U$ is empty: otherwise, after applying strong extraction, we would have $a\in U$.
 Furthermore, we have that the slash set of $v$ contains another variable $b$ (otherwise, the strong extraction rule would have preserved the empty slash set); by our normalization assumptions, we can assume $b$ is universally quantified above $\circ$. But then, the quantifiers over $b,u,v$ prove that  $\varphi_{2i-1}$ was signalling: contradiction.


2b) Suppose $\varphi_{2i}$ is \underline{Henkin}. Then there are quantifiers $(\exists u/U),\forall b,(\exists v/ V)$ in $\varphi_{2i}$ such that $u$ depends on $a$ but not on $b$ nor $v$; and $v$ depends on $b$ but not on $a$ nor $u$.

Notice that, since $(\exists u/U)$ and $(\exists v/ V)$ are in the same branch in $\varphi_{2i}$, they are also in the same branch in $\varphi_{2i-1}$. But they cannot be above $\circ$ (otherwise $\forall a,(\exists u/U),\forall b,(\exists v/ V)$ would form a Henkin pattern already in $\varphi_{2i-1}$); and for similar reasons they cannot be in $L$. So, $(\exists u/U)$ and $(\exists v/ V)$ are both in $R'$.




Since strong extraction is used, $a\notin U$ implies that $U = \emptyset$;
but then, since $u$ does not depend on $b$ nor on $v$, we must conclude that $\exists u\prec_{\varphi_{2i}} (\exists v/V), \forall b$, and that $u\in V$. From the former it also follows that $\forall a \prec_{\varphi_{2i}}(\exists v/V)$; so, since $v$ does not depend on $a$, we must conclude that $a\in V$.

The facts that $a\in V$, that $(\exists v/V)$ is in $R'$, that we have applied strong extraction, together with the normalization assumption 2. imply that $V$ contains at least one variable $v$ which is universally quantified and distinct from $a$;   that is, in $\varphi_{2i}$ there is a universal quantifier $\forall c$ (distinct from $\forall a$) occurring above $(\exists v/ V)$ and such that $c \in V$.

Now notice that $\forall c$ cannot occur above $\exists u$: if it did, then $\forall c,\exists u, (\exists v/V)$ would be a signalling pattern already occurring in $\varphi_{2i-1}$: contradiction.  So, $\exists u \prec_{\varphi_{2i-1}} \forall c\prec_{\varphi_{2i-1}}(\exists v/V)$: therefore, $\forall c$ is in $R'$, hence in the scope of $\circ$. But then, since we assumed that $\varphi_{2i-1}$ was normalized, we must have $c\notin V$: contradiction.

2c) Suppose that after applying strong extraction to a frontline connective of $\varphi_{2i}$ we obtain a sentence $\psi$ which is \underline{generalized Henkin}, as witnessed by quantifiers $\forall a,(\exists u/U),\forall z,(\exists v/V)$ such that $u$ depends on $a$ (but not on $z,v$) and $v$ depends on $z$ (but not on $a,u$). Since we are assuming that $\varphi_{2i}$ is regular, modest and normalized, lemma \ref{LEMGENHEN}, a) tells us that $(\exists u/U)$ is in $L$, $(\exists v/V)$ is in $R'$, and there is a quantifier $\forall b \prec_{\psi}\circ$ such that $b\in V$. These observations can be summarized by saying that $\psi$ has the following form:

\Tree [.$\vdots$\\$\forall b$\\$\vdots$\\$\forall a$ [.$\circ$ [.$\vdots$\\$(\exists u/U)$\\$\vdots$ ] [.$\vdots$\\$(\exists v/\{a,b,\dots\})$\\$\vdots$ ] ]  ]

 The quantifier $\forall z$ is not shown in the picture; we only know that it is somewhere above $(\exists v/V)$ (since $v$ depends on $z$). Now there are three cases. 

A) $u$ depends on $b$. In this case, $\varphi_{2i-1}$ was already generalized Henkin (as witnessed by  $\forall b,(\exists u/U),\forall z, \circ,(\exists v/V)$): contradiction. In this case, one can take $\varphi_{2i}$ to be $\psi$.

B) $u$ does not depend on $b$, and there is a disjunction $\forall b \prec_{\psi} \lor \preceq_{\psi} \circ$.
Then $\varphi_{2i-1}$ was coordinated (as witnessed by $\forall b,\forall z,\forall a,\lor,(\exists u/U),(\exists v/V)$): a contradiction. Also in this case, one can take $\varphi_{2i}$ to be $\psi$.

C) $u$ does not depend on $b$, and there is no disjunction $\forall b \prec_{\psi} \lor \preceq_{\psi} \circ$.  In this case we find no contradiction; so, instead of using quantifier extraction, we will apply some different transformations that will preserve modesty and reduce the number of $\land$-coordinated patterns by one. Observe that $\forall a,\circ,\forall b, \forall z, (\exists u/U)$ and $(\exists v/V)$ form a $\land$-coordinated pattern in $\varphi_{2i}$; therefore, $\forall a,\circ,\forall b, \forall z, (\exists u/U)$ and $(\exists v/V')$ (where $V'= v\setminus\{a\}$) already formed a $\land$-coordinated pattern in $\varphi_{2i-1}$. 
Now notice that there are no existential \CORR{quantifiers} depending on $b$ and above $(\exists u/U)$ and $(\exists v/V)$ -- otherwise the tree would be either signalling or Henkin.
So, $\forall b$ can be pushed down by quantifier swapping (prop. \ref{SWAP} and \ref{UNISWAP}) and distribution (\ref{DISTRIBUTION}), until it goes below $\circ$.  Call $\varphi_{2i}$ the resulting sentence. It should be clear that $\varphi_{2i}$ is still modest (the only depedency relations that have changed are those between $\forall a$ and some occurrences of $\land$; these changes cannot create any new non-modest patterns). Also, by lemma \ref{LEMANDCOORD2}, the number $l$ of $\land$-coordinated patterns did not increase in the process, while the operators  $\forall a,\circ,\forall b, \forall z, (\exists u/U)$ and $(\exists v/V)$ do not form such a pattern anymore; so $l$ has decreased by one.\footnote{At later stages of the procedure, the steps described in this case 2C.c will be applied to each of the universal quantifiers that play the same role as $\forall b$ (with respect to $\forall a$ and $\circ$). Only after pushing down all these quantifiers, it will be possible to extract $\forall a$ above $\circ$.}  

2d) Now, suppose instead $\varphi_{2i}$ is \underline{coordinated, first kind}. It means that, in the new tree, $\forall a$ plays either the role of $\forall x$ or that of $\forall y$ in the definition of coordinated tree. 

In the first case, we observe that  in $\varphi_{2i}$ there are quantifiers $\forall z,\forall y$ and quantifiers $(\exists u/U),(\exists v/V)$ such that $u$ depends on $y$ but not on $a$ nor $z$, while $v$ depends on $z$ but not on $a$ nor $y$; and a disjunction $\forall a\prec_{\varphi_{2i}}\lor\prec_{\varphi_{2i}} (\exists u/U),(\exists v/V)$. Lemma \ref{LEMGENHEN}, b) then tells us that $\circ$ and $\lor$ coincide,  $(\exists u/U)$ is in $L$, $(\exists v/V)$ is in $R'$, and there is a quantifier $\forall b \prec_{\psi}\circ$ such that $b\in V$.
We then check again the three subcases A), B), C) that were already considered in case 2c); case A) and case B) work similarly as before. 
 Case C) cannot be, because we already know that $\circ$ is an occurrence of $\lor$. 

In the second case ($\forall a$ playing the role of \CORR{$\forall y$} in the definition of coordinated tree), to fix notation, $\forall a$ forms a first kind coordinated pattern in $\varphi_{2i}$ together with quantifiers $\forall x,\forall z, (\exists u/U), (\exists v/V)$ (s.t. $u$ depends on $a$ but not on $x$ nor $z$, and $v$ depends on $z$ but not on $x$ nor $a$)  and an occurrence of $\lor$. We want to show that this occurrence of $\lor$ is $\circ$. 
We first observe that $u$ depends on $a$, but $U$ is nonempty; so $\forall a \prec_{\varphi_{2i-1}}(\exists u/U)$ (otherwise quantifier extraction would have added $a$ to $U$); so  $(\exists u/U)$ is in $L$ (the left subformula under $\circ$).
Next we prove the following

\underline{Claim}:  $(\exists v/V)$ is in $R'$.

\begin{proof}
If $(\exists v/V)$ occurred above $\forall a$, then the quantifiers $\forall x,\forall z, (\exists u/U), (\exists v/V)$ would all be on a same branch, contradicting the definition of coordinated pattern.If $(\exists v/V)$ occurs in the scope of $\forall a$, then it did so already in $\varphi_{2i-1}$, which is impossible by point 3. of normalization. Finally, it might be that there is a connective $\circ'\prec_{\varphi_{2i}}\circ$ such that $(\exists v/V)$ occurs in the subformula immediately below $\circ'$ which does not contain $\circ$. But we know that $(\exists u/U)$ is in $L$, so that already in $\varphi_{2i-1}$ we had $\forall a \prec_{\varphi_{2i-1}} (\exists u/U)$. This entails that $\forall x,\forall a,\forall z, (\exists u/U), (\exists v/V),\lor$ was already a coordinated pattern in $\varphi_{2i-1}$, contradicting the assumption that $\varphi_{2i-1}$ is modest.
\end{proof}

But then $\circ$ is the unique connective which has   $(\exists u/U)$ in its left subformula and $(\exists v/V)$ in its right subformula; so $\circ$ is the occurrence of $\lor$ which is part of the coordinated pattern. This and the fact that   $\forall a \prec_{\varphi_{2i-1}} (\exists u/U)$   imply that $\forall x,\forall z, \forall a,(\exists u/U), (\exists v/V),\circ$ is a coordinated pattern already in $\varphi_{2i-1}$: contradiction.


2e) Suppose $\varphi_{2i}$ is \underline{coordinated, second kind}. Then, $\varphi_{2i-1}$ contains a coordinated pattern $\forall a,\forall y,\forall z,\lor,(\exists u/U),(\exists v/V)$ of second kind; so,  $(\exists u/U),(\exists v/V)$ both occur in a same subformula under this occurrence of $\lor$. By definition of coordinated pattern, $(\exists u/U),(\exists v/V)$ must occur in different branches; this means that there is exactly one connective $\circ'$ which has  $(\exists u/U)$ in its left subformula and $(\exists v/V)$ in its right subformula (this is also the connective of maximum depth among those that have $ (\exists u/U)$ and $(\exists v/V)$ in their scope).

Now suppose first that both $u$ and $v$ do not depend on $a$ (i.e., $\forall a$ plays the role of $\forall x$ in the definition of coordinated pattern). Then, again by definition of coordinated pattern, we also have that $\forall a\prec_{\varphi_{2i}}\lor$; and furthermore, $\forall a \prec_{\varphi_{2i}} (\exists u/U),(\exists v/V)$, and consequently $\circ \prec_{\varphi_{2i}} (\exists u/U),(\exists v/V)$. Then it follows that $\circ \prec_{\varphi_{2i}}\circ'$. But then, since $\circ$ is frontline and there are quantifiers in the scope of $\circ'$, we must conclude that $\circ'$ is $\circ$. But then $\lor \prec_{\varphi_{2i}}\circ$; and since $\forall a$ is the immediate predecessor  of $\circ$ in $\varphi_{2i}$, we also have $\lor\prec_{\varphi_{2i}}\forall a$, contradicting our initial assumption.

Suppose instead that  $\forall a$ plays the role of $\forall y$ in the definition of coordinated pattern, that is: $u$ depends on $a$, and $v$ depends $z$, $\forall y\prec_{\varphi_{2i}}\lor$ and neither $u$ nor $v$ depend on $y$. Since $u$ depends on $a$, and $U$ is nonempty ($y\in U$) we must conclude that $\forall a \prec_{\varphi_{2i-1}}(\exists u/U)$ (otherwise the strong extraction would have added $a$ to the slash set of $u$). Observe then that $\forall a,\forall y,\forall z,\lor,(\exists u/U),(\exists v/V)$ was already a second kind coordinated pattern in $\varphi_{2i-1}$: contradiction.
\end{proof}

\section{Proof of the Extension Lemma (lemma \ref{EXTENDLEMMA})} \label{APPEXTLEM}

We point out the simple fact that, if $U$ extends $T$, then $U$ can be ``constructed'' adding one by one new logical operators to $T$, and updating the slash sets.

\begin{lem}\label{LEMCONSTRUCT}
Suppose $U$ extends $T$. Then there is a finite sequence $T=T_0, T_1,\dots,T_{n+1} =U$ of regular tree prefixes  such that, for each $i=0..n$, $T_{i+1}$ is obtained from $T_i$ in one of two ways:
\begin{enumerate}
\item $T_{i+1}$ is obtained by adding to $T_i$ a connective $\circ$ and one extra gap below it.
\item $T_{i+1}$ is obtained by adding to $T_i$ a quantifier $(Qv/V)$, and adding $v$  to some of the slash sets in the scope of $(Qv/V)$.
\end{enumerate}
\end{lem}

\noindent Notice that we must add the quantifiers starting from those of maximum depth, if we want to keep track correctly of what variables must be added to the slash sets. And on the other hand, if $c,d,e$ are in $U\setminus \mu[T]$ and $c,e$ are to occur in distinct branches under the connective $d$, then it is possible to add both operators $c$ and $e$ only after $d$ has been added (otherwise we do not have enough branches). These two factors make the construction not completely obvious. 

\begin{proof}
We prove, by induction on the depth of the operators in $U\setminus \mu[T]$, the claim together with the additional statement that $U$ is an extension of $T_{i+1}$ via a function $\mu_{i+1}$. Suppose $T_i$ has been constructed. Pick a connective $c$ of \emph{minimum} depth in $U\setminus \mu[T_i]$, if there is one. Add it to $T_i$ in the same position in which it occurs in $U$; add a gap below $c$ so that $c$ has exactly two nodes below (so that the resulting $T_{i+1}$ is a syntactical tree). Let $\mu_{i+1} = \mu_i\cup\{(c',c'')\}$ (where $c'$ is the occurrence of $c$ in $T_{i+1}$, and $c''$ the occurrence in $U$).
It is straightworward to check that, if $T_i$ is regular, also $T_{i+1}$ is; and that, if $U$ is an extension of $T_i$, then it is also an extension of $T_{i+1}$.

When a tree $T_j$ is reached such that $U\setminus\mu_j[T_j]$ contains no connectives, we begin treating the quantifiers. Pick a quantifier $(Qv/V)$ of \emph{maximum} depth in  $U\setminus\mu_j[T_j]$ and add it to $T_j$ in the same position in which it occurs in $U$. Then, for each quantifier $(Q'u/U)$ in the scope of $(Qv/V)$, add $v$ to the slash set of  $(Q'u/U)$ if and only if $v$ is in the slash set of $\mu_j( (Q'u/U) )$. 
 Let $\mu_{j+1} = \mu_j\cup\{((Qv/V),(Qv/V))\}$ (where again the former is the occurrence in $T_{j+1}$, and the latter the occurrence in $U$).
Regularity of $T_{j+1}$, and the fact that $U$ is an extension of $T_{j+1}$, can be checked as above.
\end{proof}

We will need some lemmas from the literature, which describe some regularities in the interaction between teams and $IF$ formulas. Given two teams $X$, $Y$ of disjoint domains, we define $X\times Y:=\{s\cup t \ | \ s\in X, t\in Y\}$. Notice that this is not the cartesian product of $X$ and $Y$.

\begin{lem}[\cite{ManSanSev2011}, Theorem 5.5] \label{LEMCART} 
Let $\psi$ be an $IF$ formula, $M$ a structure, $X,Y$ teams with $dom(X)\supseteq FV(\psi)$, $dom(X)\cap dom(Y)=\emptyset$. Then $M,X\models \psi$ iff $M,X\times Y\models \psi$. 
\end{lem}

\begin{lem}[\cite{ManSanSev2011}, Theorem 5.8] \label{LEMEXTTEAM}
 Let $\psi$ be an $IF$ formula, $M$ a structure, $X,Y$ teams with $dom(X)\supseteq FV(\psi)$ and $dom(Y) = dom(X)\cup V$, where $V$ is a set of variables that do not occur in $\psi$ nor $dom(X)$. Then $M,X\models \psi$ iff $M,Y\models \psi_{/V}$.
\end{lem}

\begin{lem}[\cite{ManSanSev2011}, Theorem 5.22b] \label{LEMEASIER} 
For all $IF$ formulas $\psi$ and $M$ and $X$, if $M,X\models (\exists v/V)$ and $W\subseteq V$, then $M,X\models (\exists v/W)$. 
\end{lem}

The following lemma is crucial for keeping under control the signalling phenomena. The key idea is that adding a subformula of the form $v=c$ prevents the variable $v$ being used as a signal.

\begin{lem}\label{LEMADDQF}
Let $\psi$ be an $IF$ formula, $v$ a variable which does not occur in $\psi$, $c$ a constant symbol. Let $\psi'$ be a formula obtained from $\psi$ by adding $v$ to some of the slash sets (as a limit case, $\psi'=\psi$).  Then we have the following equivalences:
\begin{enumerate}
 \item $\psi\equiv_v (\exists v/V)(v=c \land \psi')$.
\item $\psi\equiv_v (\forall v/V)\psi'$.
\end{enumerate}
\end{lem}

\begin{proof}
1) Let $M$ be a structure which interprets the signature of $\psi$ and the constant $c$. Let $X$ be a team s.t. $dom(X)\supseteq FV(\psi)$ and $v\notin dom(X)$. 

Suppose $M,X\models \psi$. Let $F:dom(X)\rightarrow dom(M)$ be the constant function such that $F(s)=c^M$ for each $s\in X$. Then obviously $M,X[F/v]\models v=c$. From  $M,X\models \psi$ and the fact that $v\notin dom(X)$, we get by Lemma \ref{LEMEXTTEAM} that $M,X[F/v]\models \psi_{/v}$. Then, by lemma \ref{LEMEASIER}, we get  $M,X[F/v]\models \psi'$. By the semantical clauses, we can conclude that $M,X\models (\exists v/V)(v=c \land \psi')$.

Suppose $M,X\models (\exists v/V)(v=c \land \psi')$. Then there is $F:dom(X)\rightarrow dom(M)$ such that $M,X[F/v]\models v=c \land \psi'$. From the semantical clauses we get $M,X[F/v]\models \psi'$; from this, by lemma \ref{LEMEASIER} we get $M,X[F/v]\models\psi$. Since $M,X[F/v]\models v=c$, we know that $F$ is the constant function that picks $c^M$. But then $X[F/v] = X\times \{(v,c^M)\}$. By this fact,  $M,X[F/v]\models\psi$ and lemma \ref{LEMCART} we get $M,X\models \psi$.

2) Let $M$ be a structure which interprets the signature of $\psi$. Let $X$ be a team s.t. $dom(X)\supseteq FV(\psi)$ and $v\notin dom(X)$. 

Assume $M,X\models \psi$. Since $v$ is not $dom(X)$ nor in $\psi$ we can use lemma \ref{LEMEXTTEAM} to obtain $M,X[M/v]\models\psi_{/v}$. By lemma \ref{LEMEASIER}, then, we obtain  $M,X[M/v]\models\psi'$. So $M,X\models \forall v\psi'$.

Suppose $M,X\models \forall x\psi'$. Then  $M,X[M/v]\models\psi'$. By lemma \ref{LEMEASIER} we get $M,X[M/v]\models\psi$. Since $X[M/v] = X \times (\{\emptyset\}[M/v])$, and $v$ does not occur in $\psi$, by lemma \ref{LEMCART} we get $M,X\models \psi$.
\end{proof}

For our purposes, part 1. of the previous lemma must still be refined. Given an $IF$ formula $\psi$, a variable $v$ that does not occur in $\psi$, and a constant $c$, we denote as $\psi^c$ the formula obtained from $\psi$ by replacing each maximal quantifier-free subformula $\alpha$ of $\psi$ with $v=c \land \alpha$.

\begin{lem}\label{LEMCFORMULA}
Let $\psi$ be an $IF$ formula, $v$ a variable which does not occur in $\psi$, $c$ a constant symbol. Let $\psi'$ be a formula obtained from $\psi$ by adding $v$ to some of the slash sets (as a limit case, $\psi'=\psi$). Then $\psi \equiv_v (\exists v/V)(\psi')^c$.
\end{lem}  

\begin{proof}
By Lemma \ref{LEMADDQF}, 2., it suffices to prove that, for all $IF$ formulas $\theta$ in which $v$ 
does not occur, $v=c\land \theta \equiv \theta^c$.
We do this by induction on $\theta$. If $\theta$ is a quantifier-free formula, then $\theta^c$ is $v=c\land \theta$, and we are done. If $\theta$ is $\eta\land\chi$, then it is easy to see that $v=c\land (\eta\land\chi)$ is equivalent with $(v=c\land \eta)\land(v=c\land\chi)$. By induction hypothesis, this is equivalent to $\eta^c \land \chi^c$, which is $\theta^c$. When $\theta$ is $\eta\lor\chi$ we proceed analogously using the equivalence of  $v=c\land (\eta\lor\chi)$ and $(v=c\land \eta)\lor(v=c\land\chi)$. In case $\theta=(Qu/U)\eta$, notice that $v=c\land(Qu/U)\eta\equiv (Qu/U)(v=c\land\eta)$ by quantifier extraction (Proposition \ref{STRONGEXTRACTION}), and apply the inductive hypothesis.
\end{proof}

\begin{lem}[Extension Lemma for tree prefixes, lemma \ref{EXTENDLEMMA} in the main text]
Suppose $U$ extends $T$, and $\varphi$ is a completion of $T$. Then there are a completion $\varphi'$ of $U$ and, for every structure $M$ suitable for $\varphi$, an expansion $M'$ of $M$ such that $M\models\varphi$ iff $M'\models\varphi'$.  
\end{lem}

\begin{proof}
Let $M'$ be identical to $M$, except that it interprets a constant symbol $c$ which was not in the signature of $M$. Obviously $M\models\varphi$ iff $M'\models\varphi$.
 If $U$ extends $T$, then by lemma \ref{LEMCONSTRUCT} there is a sequence $T=T_0, T_1,\dots,T_{n+1} =U$ of regular tree prefixes such that each $T_{i+1}$ is obtained from $T_i$ by adding one connective (plus a gap) or a quantifier (plus adding the newly quantified variable to some of the slash sets in the scope of the new quantifier). Let $e_0:=e$. We prove, by induction on $i$, that, given any  sentential completing function $e_i$ for $T_i$, there is a completing function $e_{i+1}$ for $T_{i+1}$ such that $M'\models\hat e_i(T_i)$ iff $M'\models\hat e_{i+1}(T_{i+1})$. The formula $\hat e_{n+1}(T_{n+1})$ is the formula $\varphi'$ required by the statement of the theorem. 

Suppose $T_i$ and $T_{i+1}$ differ only in that one subtree $B$ of $T_i$ is replaced by $(Qv/V)B'$ in $T_{i+1}$ ($B$ may be empty); and that $B'$ differs from $B$ only for the addition of variable $v$ to some slash sets. 
  For each branch $P$ of $T_i$, call $P'$ the corresponding branch of $T_{i+1}$.\footnote{More precisely: if $P'$ is a branch of $T_{i+1}$ that does not intersect the above mentioned occurrence of $(Qv/V)$, then it is associated to an identical branch $P$ of $T_i$. If instead $P'= S_1(Qv/V)S_2$ contains the occurrence of $(Qv/V)$, it corresponds to a branch $P = S_1S_2$ of $T_i$.}

Let $e_i$ be a completing function for $T_i$. Define $e_{i+1}$ as the completing function which assigns, to each branch $P'$ of $T_{i+1}$, the formula $(v=c \land e_i(P))$ in case $P'$ intersects $B'$ and $Q=\exists$; and, if $Q=\forall$, just formula $e_i(P)$.
Then (using lemma \ref{LEMCFORMULA} in case $Q=\exists$, and lemma \ref{LEMADDQF}, 2. if $Q=\forall$) we have:
\[
M',X\models \hat e_i(B) \ 
\Leftrightarrow \ 
M',X\models (Qv/V)\hat e_{i+1}(B').
\]
for any suitable team $X$. 


Thus, by substitution of equivalents (\ref{TEOSUBEQ} plus \ref{TEOEQSENT}), $e_{i+1}$ is the completing function for $T_{i+1}$ that we were looking for.

Suppose instead that $T_{i+1}$ differs from $T_i$ in that a certain subtree $B$ is turned into a tree $([\phantom a] \land B)$; the ordering of the conjuncts is unimportant. For any completing function $e_i$ of $T_i$, we can define the completing function $e_{i+1}$ for $T_{i+1}$ as that function which differs from $e$ only in that it assigns $\forall x(x=x)$ to the branch we marked
 with a $[\phantom a]$. Then it is clear that  
\[
M\models \hat e_i(T_i)
\Leftrightarrow 
M\models \hat e_{i+1}(T_{i+1}).
\]
Notice that this works also in the special case in which $B$ consists just of a gap symbol.

The case that $T_{i+1}$ differs from $T_i$ in that a certain subtree $B$ is turned into a tree $([\phantom a] \lor B)$ can be treated analogously, using $\forall x(x\neq x)$ instead of $\forall x(x=x)$.

\end{proof}

\section{Syntactical manipulation of trees} \label{APPMANTREE}

In a previous draft (\cite{Bar2016}) of the present paper we proved our syntactical results by means of rules that manipulate tree prefixes rather than formulas. We briefly account here for this different approach.

In \cite{Sev2014}, the analysis of low complexity quantifier prefixes is based on a somewhat intuitive notion of equivalence of prefixes. Two quantifier prefixes $R$, $S$ are defined to be equivalent if, whenever the same quantifier-free formula $\psi$ is postfixed to them, one obtains truth-equivalent formulas $R\psi\equiv S\psi$. This notion of equivalence has, for quantifier prefixes, two good properties: 1) it preserves complexity, in the sense of the above definitions; and 2) prefixes can be manipulated, up to equivalence, by means of manipulation rules that are formally identical to equivalence rules for $IF$ formulas. However, it seems to us that there is no reasonable notion of equivalence of tree prefixes which satisfies both 1) and 2) (although, we lack a formal proof of this statement). 

To give an example of what troubles can arise when manipulating tree prefixes (even in the first-order case), consider applying quantifier extraction: 

\vspace{5pt}

\Tree  [.$\forall x$  [.$\lor$ [.$\exists y$ $[$\phantom{a}$]$ ] [.$[$\phantom{a}$]$ ] ]  ]
\Tree  [.$\forall x$  [.$\exists y$ [.$\lor$ [.$[$\phantom{a}$]$ ] [.$[$\phantom{a}$]$ ] ]  ] ]

\vspace{5pt}

\noindent This should be a legitimate equivalence rule for tree prefixes, according to the requirement 2); notice indeed that our notion of complexity only allows attaching, to the right gap of the first tree, formulas containing at most the free variable $x$; and any $IF$ sentence obtained completing (with sentential completing functions) such a tree can undergo this kind of quantifier extraction. But this tranformation does not preserve complexity, that is, it does not satisfy requirement 1); indeed, the second tree is more complex than the first, because in its right gap it is possible to attach formulas with free variables $x,y$ (not only $x$).

It would be possible, however, to define, instead of an equivalence relation, an \emph{ordering} relation, or \emph{reduction}, between tree prefixes, in a way that the usual syntactical transformations, when applied to tree prefixes, are reductions; and so that the following property is satisfied: 1') if $T$ reduces to $T'$, then C($T$) $\subseteq$ C($T'$). In \cite{Bar2016} we fully developed this approach, up to a prenex form theorem. Here, for reasons of space, we chose the more direct approach of working directly with completions of trees; that is, we fill the gaps of the tree under consideration with letters denoting generic quantifier free formulas, and we apply the usual equivalence rules to the resulting sentence:

\vspace{5pt}

\Tree  [.$\forall x$  [.$\lor$ [.$\exists y$ $\psi(x,y)$ ] [.$\chi(x)$ ] ]  ]
\Tree  [.$\forall x$  [.$\exists y$ [.$\lor$ [.$\psi(x,y)$ ] [.$\chi(x)$ ] ]  ] ]

\vspace{5pt}

\noindent Such an equivalence of sentences tells us that the tree prefix on the right is at least as expressive as the tree prefix on the left.

\end{document}